\theoremstyle{plain}
\newtheorem{theorem}{Theorem}
\newtheorem{corollary}{Corollary}
\newtheorem{lemma}{Lemma}
\theoremstyle{definition}
\newtheorem{definition}{Definition}
\theoremstyle{remark}
\newtheorem{remark}{Remark}
\numberwithin{equation}{section}
\newcommand{\e}{\epsilon}
\newcommand{\s}{\sigma}
\newcommand{\R}{\mathbb R}
\newcommand{\N}{\mathbb N}
\newcommand{\Rn}{\mathbb R^n}
\newcommand{\rst}[1]{\ensuremath{{\mathbin\upharpoonright}%
\raise-.5ex\hbox{$#1$}}}
\def\bb{\begin{equation}} \def\ee{\end{equation}}
\def \p  {\partial}
\def \om {\Omega}
\def \o {\omega}
\begin{document}
%
%
%
%
%
\title[Observability inequalities and measurable sets]{Observability inequalities and measurable sets}

\author{J. Apraiz}
\address[J. Apraiz]{Universidad del Pa{\'\i}s Vasco/Euskal Herriko
Unibertsitatea\\Departamento de Matem\'atica Aplicada\\Escuela Universitaria Polit\'ecnica de Donostia-San Sebasti\'an\\Plaza de Europa 1\\20018 Donostia-San Sebasti\'an, Spain.}
\email{jone.apraiz@ehu.es}

\author{L. Escauriaza}
\address[L. Escauriaza]{Universidad del Pa{\'\i}s Vasco/Euskal Herriko
Unibertsitatea\\Dpto. de Matem\'aticas\\Apto. 644, 48080 Bilbao, Spain.}
\email{luis.escauriaza@ehu.es}
\thanks{The first two authors are supported  by Ministerio de Ciencia e Innovaci\'on grants, MTM2004-03029 and MTM2011-2405.}
\thanks{The last two authors are
supported by
the National Natural Science Foundation of China under grants
11161130003 and 11171264 and partially by National Basis Research
 Program of China (973 Program) under grant 2011CB808002.}
\author{G. Wang}
\address[G. Wang]{Department of Mathematics and Statistics, Wuhan University, Wuhan, China}
\email{wanggs62@yeah.net}

\author{C. Zhang}
\address[C. Zhang]{Department of Mathematics and Statistics, Wuhan University, Wuhan, China}
\email{zhangcansx@163.com}

\keywords{observability inequality, heat equation, measurable set, spectral inequality}
\subjclass{Primary: 35B37}
\begin{abstract}
This paper presents two observability inequalities for the heat equation over $\Omega\times (0,T)$. In the first one, the observation is from a subset of positive measure in $\Omega\times (0,T)$, while in the second, the observation is from a subset of positive surface measure on $\partial\Omega\times (0,T)$. It also proves the Lebeau-Robbiano spectral inequality when $\Omega$ is a  bounded  Lipschitz  and locally star-shaped  domain. Some applications for the above-mentioned observability inequalities are provided.
\end{abstract}
\maketitle
\section{Introduction}\label{S:1}
Let  $\Omega$ be a bounded Lipschitz  domain in $\mathbb{R}^n$  and $T$ be a fixed positive time. Consider the heat equation:
\begin{equation}\label{1.1}
\begin{cases}
\partial_tu-\Delta u=0,\ &\text{in}\ \Omega\times (0,T),\\
u=0,\  &\text{on}\ \partial\Omega\times(0,T),\\
u(0)=u_0,\ &\text{in}\ \Omega,
\end{cases}
\end{equation}
with $u_0$ in $L^2(\Omega)$. The solution of (\ref{1.1}) will be treated as either a function from $[0,T]$ to $L^2(\Omega)$ or a function of two variables $x$ and $t$.   Two important apriori estimates for the above equation are as follows:
\begin{equation}\label{1.2}
\|u(T)\|_{L^2(\Omega)}\leq N(\Omega,T,\mathcal{D})\int_{\mathcal{D}}|u(x,t)|\,dxdt,\;\;\mbox{for all}\;\; u_0\in L^2(\Omega),
\end{equation}
 where $\mathcal{D}$ is a subset of $\Omega\times(0,T)$, and
 \begin{equation}\label{1.3}
\|u(T)\|_{L^2(\Omega)}\leq N(\Omega,T,\mathcal{J})\int_{\mathcal{J}}|\tfrac{\partial }{\partial\nu}u(x,t)|\,d\sigma dt, \;\;\mbox{for all}\;\; u_0\in L^2(\Omega),
\end{equation}
where $\mathcal{J}$ is a subset of $\partial\Omega\times(0,T)$. Such apriori estimates are called observability inequalities.

In the case that  $\mathcal{D}=\omega\times (0,T)$ and $\mathcal{J}=\Gamma\times (0,T) $ with $\omega$ and $\Gamma$ accordingly  open and nonempty subsets of $\Omega$ and $\partial\Omega$, both inequalities (\ref{1.2}) and (\ref{1.3}) (where $\partial\Omega$ is smooth)  were  essentially first established, via the Lebeau-Robbiano spectral inequalities in \cite{G. LebeauL. Robbiano} (See also \cite{G. LebeauE. Zuazua, Miller2,Fernandez-CaraZuazua1}). These two estimates were set up to the linear parabolic equations (where $\partial\Omega$ is of class $C^2$), based on the Carleman inequality provided in \cite{FursikovOImanuvilov}. In the case when  $\mathcal{D}=\omega\times (0,T)$  and $\mathcal{J}=\Gamma\times (0,T) $ with $\omega$ and $\Gamma$ accordingly  subsets of positive measure and positive surface measure in $\Omega$ and $\partial\Omega$, both inequalities (\ref{1.2}) and (\ref{1.3}) were built up in \cite{ApraizEscauriaza1} with the help of a propagation of smallness estimate from measurable sets for real-analytic functions first established in \cite{Vessella} (See also Theorem \ref{T:3}).  For $\mathcal{D}=\omega\times E$,
with $\omega$ and $E$ accordingly  an open subset of $\Omega$ and a subset of positive measure in $(0,T)$, the inequality (\ref{1.2}) (with $\partial\Omega$ is smooth)  was proved in  \cite{gengshengwang1} with the aid of the Lebeau-Robbiano spectral inequality,  and
it was then verified for heat equations (where $\Omega$ is convex) with lower terms depending on the time variable, through a frequency function method in \cite{PhungWang1}. When $\mathcal{D}=\omega\times E$,
with $\omega$ and $E$ accordingly  subsets of positive measure in $\Omega$ and $(0,T)$,   the estimate (\ref{1.2}) (with $\partial\Omega$ is real-analytic) was obtained in  \cite{canzhang}.

The purpose of this study is to establish inequalities (\ref{1.2}) and (\ref{1.3}),  when $\mathcal{D}$ and $\mathcal{J}$ are  arbitrary subsets of positive measure and of positive surface measure in $\Omega\times (0,T)$ and $\partial\Omega\times (0,T)$ respectively. Such inequalities  not only are  mathematically interesting but also have important applications in the control theory  of the heat equation, such as the bang-bang control, the time optimal control, the null controllability over a measurable set and so on (See Section \ref{S:3} for the applications).

The starting point we choose here to prove the above-mentioned two inequalities is to assume that the Lebeau-Robbiano spectral inequality stands on $\Omega$. To introduce it, we write
\begin{equation*}
0<\lambda_1\le\lambda_2\le\dots\leq\lambda_j\le\cdots
\end{equation*}
for the eigenvalues of $-\Delta$ with the zero Dirichlet boundary condition over $\partial\Omega$, and  $\{e_j: j\ge1 \}$ for  the set of $L^2(\Omega)$-normalized eigenfunctions, i.e.,
\begin{equation*}
\begin{cases}
\Delta e_j+\lambda_je_j=0,\ &\text{in}\ \Omega,\\
e_j=0,\ &\text{on}\ \partial\Omega.
\end{cases}
\end{equation*}
For  $\lambda>0$ we define
\begin{equation*}
\mathcal E_\lambda  f=\sum_{\lambda_j\le\lambda}( f,e_j)\,e_j\quad \text{and}\quad \mathcal E_\lambda^\perp  f=\sum_{\lambda_j>\lambda}( f,e_j)\,e_j,
\end{equation*}
where
\begin{equation*}
( f,e_j)=\int_{\Omega} f\, e_j\,dx,\ \text{when}\   f\in L^2(\Omega),\ j\ge 1.
\end{equation*}
Throughout this paper the following notations  are effective:
\begin{equation*}
(f,g)=\int_{\Omega}fg\,dx\  \text{and}\ \|f\|_{L^2(\Omega)}=\left(f,f\right)^{\frac 12};
\end{equation*}
$\nu$  is the unit exterior normal vector to $\p\om$; $d\s$ is surface measure on $\partial\Omega$;
$B_R(x_0)$ stands for the ball centered at $x_0$ in $\R^n$ of radius $R$; $\triangle_R(x_0)$ denotes $B_R(x_0)\cap\partial\Omega$; $B_R=B_R(0)$, $\triangle_R=\triangle_R(0)$; for measurable sets $\omega\subset\Rn$ and $\mathcal{D}\subset\Rn\times (0,T)$, $|\omega|$ and $|\mathcal{D}|$ stand for the Lebesgue measures of the sets; for each measurable set $\mathcal{J}$ in $\partial\Omega\times (0,T)$, $|\mathcal{J}|$ denotes its surface measure on the lateral boundary of $\Omega\times \R$;  $\{e^{t\Delta} : t\geq 0\}$ is the semigroup generated by $\Delta$ with zero Dirichlet boundary condition over $\partial\Omega$. Consequently,  $e^{t\Delta}f$ is the solution of Equation \eqref{1.1} with the initial state $f$ in $L^2(\Omega)$.
  The Lebeau-Robbiano spectral inequality is as follows:
 \vspace{0.2cm}

 \emph{For each $0<R\le 1$, there is $N=N(\Omega,R)$, such that the inequality
\begin{equation}\label{E: desigualdadespectralocalizada}
\|\mathcal E_\lambda  f\|_{L^2(\Omega)}\le Ne^{N\sqrt\lambda}\|\mathcal E_\lambda  f\|_{L^2(B_R(x_0))}
\end{equation}
holds, when $B_{4R}(x_0)\subset\Omega$, $f\in L^2(\Omega)$ and $\lambda>0$.}
\bigskip

 To our best knowledge, the inequality \eqref{E: desigualdadespectralocalizada} has been proved under condition that  $\partial\Omega$ is at least $C^2$
 \cite{G. LebeauL. Robbiano,G. LebeauE. Zuazua,RousseauRobbiano2, {luqi}}. In the current work, we obtain this inequality when $\Omega$ is a  bounded Lipschitz and locally star-shaped domain in $\Rn$ (See Definitions \ref{D: Lipschitz domain} and \ref{D: contractable} in Section 3).  It can be observed from Section \ref{S:6}  that bounded $C^1$ domains,  Lipschitz polygons in the plane, Lipschitz polyhedrons in $\R^n$, with $n\geq 3$ and bounded convex domains in $\Rn$ are always bounded Lipschitz  and locally star-shaped (See Remarks \ref{R: algobastanteimportantene} and \ref{R: algoasombroso} in Section 3).

 Our main results related to the observability inequalities are stated as follows:

 \begin{theorem}\label{T:5carcajada} Suppose that  a bounded domain $\Omega$ verifies the condition \eqref{E: desigualdadespectralocalizada} and $T>0$. Let $x_0\in \Omega$ and $R\in (0,1]$ be such that
 $B_{4R}(x_0)\subset\Omega$. Then, for each measurable set $\mathcal D\subset B_R(x_0)\times (0,T)$ with $|\mathcal{D}|>0$,  there is a positive constant $B=B(\Omega, T, R, \mathcal{D})$, such that
\begin{equation}\label{E: observabilidadparabolica}
\|e^{T\Delta}f\|_{L^2(\Omega)}\le e^{B}\int_{\mathcal D}|e^{t\Delta}f(x)|\,dxdt,\;\;\mbox{when}\;\; f\in L^2(\Omega).
\end{equation}
\end{theorem}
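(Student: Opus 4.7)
The plan is to combine the spectral inequality \eqref{E: desigualdadespectralocalizada}, the heat-semigroup dissipation estimate $\|\mathcal E_\lambda^\perp e^{s\Delta}\|_{L^2\to L^2}\le e^{-\lambda s}$, and a quantitative propagation of smallness from measurable sets for real-analytic functions (Theorem \ref{T:3}), applied to the spatially real-analytic map $x\mapsto e^{t\Delta}f(x)$ at each fixed time $t>0$.

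First, I would replace the space-time set $\mathcal D$ by a family of time slices with uniform lower bound on their spatial measure. By Fubini, for some $\eta>0$ the set
\begin{equation*}
E=\{t\in(0,T):|\mathcal D_t|\ge\eta\},\qquad \mathcal D_t=\{x\in B_R(x_0):(x,t)\in\mathcal D\},
\end{equation*}
has positive measure in $(0,T)$. Choosing a Lebesgue density point $l\in\overline E\cap(0,T)$ and a decreasing sequence $T>\tau_0>\tau_1>\dots\searrow l$ with $\tau_k-\tau_{k+1}$ geometric, the density theorem yields $|E\cap(\tau_{k+1},\tau_k)|\ge c(\tau_k-\tau_{k+1})$ for large $k$. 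For $t\in E$, standard interior estimates for the heat equation give uniform control of the complex-analytic extension of $e^{t\Delta}f$ in a neighborhood of $\overline{B_R(x_0)}$ in terms of $\|f\|_{L^2(\Omega)}$. Feeding these bounds into Theorem \ref{T:3} yields the interpolation
\begin{equation*}
\|e^{t\Delta}f\|_{L^2(B_R(x_0))}\le C\left(\int_{\mathcal D_t}|e^{t\Delta}f(x)|\,dx\right)^{\theta}\|f\|_{L^2(\Omega)}^{1-\theta},
\end{equation*}
with $\theta=\theta(\eta)\in(0,1)$ uniform in $t\in E$.

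Next, applying \eqref{E: desigualdadespectralocalizada} to $\mathcal E_\lambda e^{\tau_{k+1}\Delta}f$, splitting $e^{\tau_{k+1}\Delta}f=\mathcal E_\lambda e^{\tau_{k+1}\Delta}f+\mathcal E_\lambda^\perp e^{\tau_{k+1}\Delta}f$, and inserting the interpolation above (integrated in $t\in E\cap(\tau_{k+1},\tau_k)$ via H\"older), I obtain a recurrence of the schematic form
\begin{equation*}
\|e^{\tau_{k+1}\Delta}f\|_{L^2(\Omega)}\le A e^{N\sqrt{\lambda_k}}\Bigl(\int_{\mathcal D\cap\,\Omega\times(\tau_{k+1},\tau_k)}|e^{s\Delta}f|\,dxds\Bigr)^{\theta}\|f\|_{L^2(\Omega)}^{1-\theta}+e^{-\lambda_k(\tau_k-\tau_{k+1})}\|e^{\tau_k\Delta}f\|_{L^2(\Omega)}.
\end{equation*}
A Young-type convex-combination inequality absorbs the $\|f\|_{L^2(\Omega)}^{1-\theta}$ factor, and iterating the resulting recurrence from $\tau_0$ down toward $l$ with $\lambda_k$ chosen so that $\lambda_k(\tau_k-\tau_{k+1})$ grows fast enough to dominate $N\sqrt{\lambda_k}$ telescopes the high-frequency contribution and delivers \eqref{E: observabilidadparabolica}, after invoking semigroup contractivity to pass from $\|e^{\tau_0\Delta}f\|_{L^2(\Omega)}$ to $\|e^{T\Delta}f\|_{L^2(\Omega)}$.

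The main obstacle is calibrating the three scales, namely the shrinking time gaps $\tau_k-\tau_{k+1}$, the cut-off frequencies $\lambda_k$, and the H\"older exponents $\theta$ from propagation of smallness, so that the iteration converges with a finite constant and the accumulated decay exponents sum. Executing the Young-type absorption cleanly, and ensuring that the propagation of smallness constant remains uniform as $|\mathcal D_t|$ varies within $E$, are the technical crux of the argument.
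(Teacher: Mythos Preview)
Your outline follows the same architecture as the paper (Fubini slicing, density point, geometric time sequence, interpolation, telescoping), but there is a real gap in the recurrence, and it is precisely the point you flag as ``the technical crux''.

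The factor $\|f\|_{L^2(\Omega)}^{1-\theta}$ in your schematic recurrence is fatal for the telescoping. After Young's inequality it becomes an additive term $\epsilon_k\|f\|_{L^2(\Omega)}$ at \emph{every} step $k$; summing over $k$ produces $(\sum_k\epsilon_k)\|f\|_{L^2(\Omega)}$ on the right, and this sum cannot be made to vanish while keeping the observation constants finite. What is needed is that the interpolation inequality on the interval $(\tau_{k+1},\tau_k)$ carry the factor $\|e^{\tau_{k+1}\Delta}f\|_{L^2(\Omega)}^{1-\theta}$, so that after Young the resulting term $\epsilon\,\|e^{\tau_{k+1}\Delta}f\|_{L^2(\Omega)}$ matches the left-hand side of step $k+1$ and the series telescopes. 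The paper secures exactly this: Theorem~\ref{T: 2global 2-sphere 1-cylinder} gives
\[
\|e^{t\Delta}f\|_{L^2(\Omega)}\le\bigl(Ne^{N/(t-s)}\|e^{t\Delta}f\|_{L^1(\omega)}\bigr)^{\theta}\|e^{s\Delta}f\|_{L^2(\Omega)}^{1-\theta},
\]
with the earlier-time norm $\|e^{s\Delta}f\|$ (not $\|f\|$), and Theorem~\ref{T:4carcajada} takes $s=t_1=\tau_{k+1}$. The fix in your framework is to bound the analyticity constant $M$ in Theorem~\ref{T:3} by estimating derivatives of $e^{t\Delta}f$ from time $\tau_{k+1}$, i.e.\ $M\le N e^{N/(t-\tau_{k+1})}\|e^{\tau_{k+1}\Delta}f\|_{L^2(\Omega)}$, rather than from time $0$.

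A second, smaller issue: your recurrence has the time direction inverted. Since $\tau_{k+1}<\tau_k$, the high-frequency decay $\|\mathcal E_\lambda^\perp e^{t\Delta}f\|\le e^{-\lambda(t-s)}\|e^{s\Delta}f\|$ only lets you bound the \emph{later} time by the \emph{earlier} one, so the left-hand side of the recurrence should be $\|e^{\tau_k\Delta}f\|_{L^2(\Omega)}$ and the remainder term $e^{-\lambda_k(\tau_k-\tau_{k+1})}\|e^{\tau_{k+1}\Delta}f\|_{L^2(\Omega)}$. With both corrections the argument coincides with the paper's.
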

\begin{theorem}\label{boundary observability}  Suppose that a bounded Lipschitz domain $\Omega$ verifies the condition \eqref{E: desigualdadespectralocalizada} and $T>0$.
Let $q_0\in \partial\Omega$ and $R\in (0,1]$ be such that $\triangle_{4R}(q_0)$ is real-analytic. Then, for each measurable set  $\mathcal J\subset \triangle_R(q_0)\times (0,T)$
with $|\mathcal{J}|>0$,  there is  a positive constant $B=B(\Omega, T, R, \mathcal{J})$, such that
\begin{equation}\label{E: observabilidadparabolica2frontera}
\|e^{T\Delta}f\|_{L^2(\Omega)}\le e^{B}\int_{\mathcal J}|\tfrac{\partial}{\partial\nu}\, e^{t\Delta}f(x)|\,d\s dt,\;\;\mbox{when}\;\; f\in L^2(\Omega).
\end{equation}
\end{theorem}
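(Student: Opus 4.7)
The plan is to combine three ingredients: a boundary analogue of the Lebeau--Robbiano spectral inequality \eqref{E: desigualdadespectralocalizada} in which the $L^2$-mass on an interior ball is replaced by the $L^2$-norm of $\p_\nu \mathcal E_\lambda f$ on an analytic boundary patch; the Vessella-type propagation of smallness from measurable sets (Theorem \ref{T:3}) applied to the real-analytic function $\p_\nu\mathcal E_\lambda f$; and the Lebeau--Robbiano telescoping series in time, as already employed for Theorem \ref{T:5carcajada}.

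First I would establish a boundary spectral inequality of the form
\begin{equation*}
\|\mathcal E_\lambda f\|_{L^2(\Omega)}\le Ne^{N\sqrt\lambda}\,\|\p_\nu\mathcal E_\lambda f\|_{L^2(\triangle_R(q_0))},\quad\lambda>0.
\end{equation*}
Since $\triangle_{4R}(q_0)$ is real-analytic, every $e_j$ extends real-analytically across this patch, hence so does $u=\mathcal E_\lambda f$. After flattening the patch by a real-analytic diffeomorphism and extending $u$ by odd reflection, one obtains a real-analytic function on a two-sided neighborhood satisfying an elliptic equation with real-analytic coefficients. Because $u$ vanishes on the patch, a Cauchy-type bound at the interface controls the size of this extension by $\|\p_\nu u\|_{L^2(\triangle_R(q_0))}$. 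A chain of three-balls / doubling inequalities for the extension then propagates this bound to an interior ball $B_r(x_\ast)\subset\Omega$, where \eqref{E: desigualdadespectralocalizada} supplies the factor $e^{N\sqrt\lambda}$ and converts the local bound into $\|u\|_{L^2(\Omega)}$.

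Next I would pass from $\triangle_R(q_0)$ to the measurable set $\mathcal J$. By Fubini there exist $E\subset(0,T)$ with $|E|>0$ and $c>0$ such that the slice $\mathcal J(t)=\{x:(x,t)\in\mathcal J\}$ has surface measure $\ge c$ for every $t\in E$. Applied to the real-analytic function $\p_\nu \mathcal E_\lambda w$ on $\triangle_{4R}(q_0)$, the propagation of smallness from measurable sets (Theorem \ref{T:3}) yields, for each such $t$, an interpolation of the form
\begin{equation*}
\|\p_\nu\mathcal E_\lambda w\|_{L^2(\triangle_R(q_0))}\le C\,\|\p_\nu\mathcal E_\lambda w\|_{L^1(\mathcal J(t))}^\theta\,\|\p_\nu\mathcal E_\lambda w\|_{L^\infty(\triangle_{4R}(q_0))}^{1-\theta},
\end{equation*}
with $\theta=\theta(c)\in(0,1)$; elliptic and semigroup estimates give $\|\p_\nu\mathcal E_\lambda w\|_{L^\infty(\triangle_{4R}(q_0))}\le Ce^{C\sqrt\lambda}\|w\|_{L^2(\Omega)}$.

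Combining the boundary spectral inequality with this interpolation and applying Young's inequality produces, for every $\e>0$, a bound of the form
\begin{equation*}
\|\mathcal E_\lambda w\|_{L^2(\Omega)}\le \e\,\|w\|_{L^2(\Omega)}+C_\e\,e^{N\sqrt\lambda/\theta}\,\|\p_\nu w\|_{L^1(\mathcal J(t))},\quad t\in E,\;w\in L^2(\Omega).
\end{equation*}
Applying this with $w=e^{s\Delta}f$, cutoffs $\lambda_k=2^k$, and a nested sequence of subintervals of $E$ exactly as in the telescoping construction of Theorem \ref{T:5carcajada}, and using the decay $\|\mathcal E_{\lambda}^\perp e^{\tau\Delta}g\|_{L^2(\Omega)}\le e^{-\lambda\tau}\|g\|_{L^2(\Omega)}$ to absorb the high modes, the resulting geometric series converges and yields \eqref{E: observabilidadparabolica2frontera}. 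The main obstacle is the boundary spectral inequality: the odd-reflection argument and the chain of propagation estimates must be arranged so that the constant in front of $\|\p_\nu u\|_{L^2(\triangle_R(q_0))}$ grows no worse than $e^{N\sqrt\lambda}$ and depends only on $\Omega$, $q_0$ and $R$; the remaining steps are close adaptations of the interior scheme used for Theorem \ref{T:5carcajada}.
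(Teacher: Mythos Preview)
Your route is genuinely different from the paper's. The paper does \emph{not} prove a boundary spectral inequality and then mimic the interior argument. Instead it works directly with the caloric function $e^{t\Delta}f$: a parabolic Carleman estimate for $\Delta+\partial_t$ (Lemma~\ref{lemma1}) yields a boundary-to-interior interpolation (Lemma~\ref{lemma6}, Theorem~\ref{interpolation}) of the form
\[
\|e^{t_2\Delta}f\|_{L^2(\Omega)}\le\bigl(Ne^{N/(t_2-t_1)}\|\tfrac{\partial}{\partial\nu}e^{t\Delta}f\|_{L^2(\triangle_R(q_0)\times(t_1,t_2))}\bigr)^\theta\|e^{t_1\Delta}f\|_{L^2(\Omega)}^{1-\theta},
\]
and then space--time analyticity of $e^{t\Delta}f$ near the analytic patch (Lemma~\ref{L: analiticidadcalorica}) lets one apply Vessella-type propagation separately in $t$ (Lemma~\ref{L: algoque tambieninfluye}) and in $x$ (Theorem~\ref{T:3} on the hypersurface) to pass from $\triangle_R(q_0)\times(t_1,t_2)$ to the measurable slices $\mathcal J_t$. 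The telescoping series then closes as in Theorem~\ref{T:5carcajada}. Spectral projections never appear in the boundary argument.

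Your scheme has a concrete gap. The displayed inequality
\[
\|\mathcal E_\lambda w\|_{L^2(\Omega)}\le \e\|w\|_{L^2(\Omega)}+C_\e\,e^{N\sqrt\lambda/\theta}\|\partial_\nu w\|_{L^1(\mathcal J(t))},\qquad w\in L^2(\Omega),
\]
cannot hold for arbitrary $w$: the Vessella step gives you $\|\partial_\nu\mathcal E_\lambda w\|_{L^1(\mathcal J(t))}$ on the right, and replacing this by $\|\partial_\nu w\|_{L^1(\mathcal J(t))}$ costs the error $\|\partial_\nu\mathcal E_\lambda^\perp w\|_{L^1(\mathcal J(t))}$, which for a generic $w\in L^2(\Omega)$ is neither small nor even finite. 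The substitution is only legitimate when $w=e^{t\Delta}f$ and one has a reference time $s<t$, so that the high modes carry a factor $e^{-\lambda_j(t-s)}$; even then you must supply a bound of the type $\|\partial_\nu e_j\|_{L^2(\partial\Omega)}\lesssim\lambda_j^{1/2}$ (or the cruder $e^{\sqrt{\lambda_j}}$ from \eqref{E: loqueloescocurrealasautofuncioens}) to show
\[
\|\partial_\nu\mathcal E_\lambda^\perp e^{t\Delta}f\|_{L^1(\mathcal J(t))}\le N e^{N/(t-s)}e^{-\lambda(t-s)/2}\|e^{s\Delta}f\|_{L^2(\Omega)}.
\]
None of this is in your outline. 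With this repair your elliptic/spectral route can be made to work, but as written the key absorption step is missing, and the claimed inequality ``for every $w\in L^2(\Omega)$'' is false.
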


\noindent The definition of the real analyticity for $\triangle_{4R}(q_0)$ is given in Section 4 (See Definition~\ref{D: fromteralocalrealanalitica}).

\begin{theorem}\label{T: elteoremauquenuncaquiseescribir} Let $\Omega$ be a bounded Lispchitz and locally star-shaped domain in $\Rn$. Then, $\Omega$ verifies the condition \eqref{E: desigualdadespectralocalizada}.
\end{theorem}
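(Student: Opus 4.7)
The plan is to reduce the spectral inequality to a global quantitative unique continuation estimate for the elliptic operator $\partial_t^2+\Delta$ on the cylinder $\Omega\times\mathbb R$. Given $f\in L^2(\Omega)$ and $\lambda>0$, set $v=\mathcal E_\lambda f=\sum_{\lambda_j\le\lambda}c_j\, e_j$ and define the ``Poisson-like'' lift
\begin{equation*}
w(x,t)=\sum_{\lambda_j\le\lambda}c_j\,\cosh(\sqrt{\lambda_j}\,t)\,e_j(x),\qquad (x,t)\in\Omega\times\mathbb R.
\end{equation*}
Then $w(\cdot,0)=v$, $\partial_t w(\cdot,0)=0$, $w$ vanishes on $\partial\Omega\times\mathbb R$, and $w$ solves the uniformly elliptic equation $(\partial_t^2+\Delta)w=0$. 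By Parseval in $\Omega$ we have the explicit two-sided bounds $\|v\|_{L^2(\Omega)}\le\|w(\cdot,t)\|_{L^2(\Omega)}\le\cosh(\sqrt\lambda\,t)\|v\|_{L^2(\Omega)}$, so integrating in $t$ gives $\|v\|_{L^2(\Omega)}^2\le\tfrac12\|w\|_{L^2(\Omega\times(-1,1))}^2$ and $\|w\|_{L^2(\Omega\times(-2,2))}\le C\,e^{2\sqrt\lambda}\|v\|_{L^2(\Omega)}$.

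The heart of the argument is an interpolation estimate of the form
\begin{equation*}
\|w\|_{L^2(\Omega\times(-1,1))}\;\le\;C\,\|w\|_{L^2(B_R(x_0)\times(-2,2))}^{\theta}\,\|w\|_{L^2(\Omega\times(-2,2))}^{1-\theta}
\end{equation*}
with $\theta=\theta(\Omega,R)\in(0,1)$. The plan for this is the classical chain of three-ball inequalities: in the interior of $\Omega\times\mathbb R$ one has the standard Hadamard/log-convexity three-ball inequality for solutions of $(\partial_t^2+\Delta)w=0$, and one connects $B_R(x_0)\times(-2,2)$ to every other interior ball by a finite chain depending only on $\Omega$ and $R$. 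At boundary points of $\partial\Omega\times\mathbb R$ one needs the analogous three-ball (or doubling) inequality with Dirichlet data; here I would flatten the Lipschitz boundary around a point where $\Omega$ is locally star-shaped, use the star-shaped structure to perform an even reflection across the flattened piece that yields a divergence-form elliptic equation with coefficients still bounded and uniformly elliptic (the star-shapedness is precisely what keeps the pulled-back equation in divergence form with admissible coefficients), and apply the interior three-ball inequality to the reflected $w$. Combined with an odd extension of $w$ through $\partial\Omega\times\mathbb R$ and a finite boundary covering provided by compactness, this produces the global three-cylinder inequality above.

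Once this propagation of smallness is in hand, the final step is an easy bookkeeping. By orthogonality of $\{e_j\}$ one checks that $\|w\|_{L^2(B_R(x_0)\times(-2,2))}\le C\,e^{2\sqrt\lambda}\|v\|_{L^2(B_R(x_0))}$ as well (the extra factor comes only from $\cosh(\sqrt\lambda\,t)$, not from the spatial norm, since $B_R(x_0)\subset\Omega$); plugging the two global bounds into the three-cylinder estimate yields
\begin{equation*}
\|v\|_{L^2(\Omega)}\;\le\;C\,e^{2\sqrt\lambda}\,\|v\|_{L^2(B_R(x_0))}^{\theta}\,\|v\|_{L^2(\Omega)}^{1-\theta},
\end{equation*}
and absorbing $\|v\|_{L^2(\Omega)}^{1-\theta}$ on the left gives $\|v\|_{L^2(\Omega)}\le N\,e^{N\sqrt\lambda}\|v\|_{L^2(B_R(x_0))}$ with $N=N(\Omega,R)$. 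The principal obstacle is the boundary three-ball/propagation step: on a merely Lipschitz boundary one cannot apply classical Carleman estimates directly, so the locally star-shaped hypothesis must be used in an essential way, through a Rellich-type monotonicity or a controlled reflection, to produce a doubling or three-sphere inequality for $w$ up to $\partial\Omega\times\mathbb R$ with constants that depend only on the Lipschitz and star-shape characters of $\Omega$.
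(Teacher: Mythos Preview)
Your overall architecture is the right one, and your propagation-of-smallness step up to the boundary is close to what the paper does (there it is obtained not by reflection but directly from the three-ball inequality of Lemma~\ref{L: convexidadlogaritmica}, whose proof uses the sign condition $(q-x_p)\cdot\nu_q\ge0$ coming from the local star-shape hypothesis; your description of star-shapedness as ``what keeps the pulled-back equation in divergence form'' is not the correct mechanism). However, the final bookkeeping step contains a genuine gap that breaks the argument.

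You claim that ``by orthogonality of $\{e_j\}$'' one has
\[
\|w\|_{L^2(B_R(x_0)\times(-2,2))}\le C\,e^{2\sqrt\lambda}\,\|v\|_{L^2(B_R(x_0))}.
\]
But the $e_j$ are orthogonal on $\Omega$, not on $B_R(x_0)$. For fixed $t$ one has $w(\cdot,t)=\sum_{\lambda_j\le\lambda}c_j\cosh(\sqrt{\lambda_j}\,t)\,e_j$, and there is no reason why $\|w(\cdot,t)\|_{L^2(B_R(x_0))}$ should be controlled by $\|v\|_{L^2(B_R(x_0))}=\|\sum c_j e_j\|_{L^2(B_R(x_0))}$: the map sending $(c_j)$ to the local $L^2$ norm is not monotone in the coefficients. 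In fact, a bound $\|w(\cdot,t)\|_{L^2(B_R(x_0))}\le Ce^{C\sqrt\lambda}\|v\|_{L^2(B_R(x_0))}$ already \emph{implies} the spectral inequality (combine it with $\|w(\cdot,t)\|_{L^2(B_R(x_0))}\ge$ local norm and the trivial global bound), so assuming it is circular.

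The paper circumvents exactly this difficulty by a different choice of lift. Instead of $\cosh$, it takes
\[
u(x,y)=\sum_{\lambda_j\le\lambda}a_j\,\frac{\sinh(\sqrt{\lambda_j}\,y)}{\sqrt{\lambda_j}}\,e_j(x),
\]
so that $u(\cdot,0)\equiv0$ while $\partial_y u(\cdot,0)=\mathcal E_\lambda f$. After the three-cylinder propagation one arrives at $\|u\|_{L^2(B_{R/8}^+(0,0))}$ on the right, and then Lemma~\ref{L: interpolacionenlafrontera} (a Carleman/Rellich boundary estimate) converts this half-ball norm into $\|\partial_y u(\cdot,0)\|_{L^2(B_R)}=\|\mathcal E_\lambda f\|_{L^2(B_R(x_0))}$. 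This last step is precisely the missing idea in your proposal: you need a way to pass from the $(n{+}1)$-dimensional norm of the lift on a small set to the $n$-dimensional norm of $\mathcal E_\lambda f$ on $B_R(x_0)$, and the $\sinh$ lift together with a normal-derivative interpolation inequality is what makes that possible.
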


It deserves mentioning that Theorem \ref{boundary observability} also holds when $\Omega$ is a Lipschitz polyhedron in $\Rn$ and $\mathcal J$ is a measurable subset  with positive surface measure of $\partial\Omega\times (0,T)$ (See the part $(ii)$ in  Remark~\ref{section4remark11}).

In Section  \ref{S:3} we explain some applications of the Theorems \ref{T:5carcajada} and \ref{boundary observability} in the control theory of the heat equation. In particular, the existence of $L^\infty(\mathcal D)$-interior  and $L^\infty(\mathcal J)$-boundary admissible controls, the uniqueness and bang-bang properties of the minimal $L^\infty$-norm  control and the uniqueness and bang-bang property for the optimal controls  associated to the first type and the second type of the time optimal control problems.

In this work we use the new strategy developed in \cite{PhungWang1} to prove parabolic observability inequalities: a mixing of ideas from \cite{Miller2}, the global interpolation inequalitiy in Theorems \ref{T: 2global 2-sphere 1-cylinder} and \ref{interpolation} and the telescoping series method. This new strategy can also be extended to more general parabolic evolutions with variable time-dependent second order coefficients and with unbounded lower order time-dependent coefficients. To do it one must prove the global interpolation inequalities in Theorems \ref{T: 2global 2-sphere 1-cylinder} and \ref{interpolation} for the corresponding parabolic evolutions. These can be derived in the more general setting from the Carleman inequalities in \cite{Escauriaza1,EscauriazaFernandez1,EscauriazaVega,Fernandez1,KochTataru} or from local versions of frequency function arguments \cite{EscauriazaFernandezVessella,PhungWang1}. Here we choose to derive the interpolation inequalities only for the heat equation and from the condition \eqref{E: desigualdadespectralocalizada} because it is technically less involved and helps to make the presentation of the basic ideas more clear.

The rest of the paper is organized as follows: Section \ref{S:2} proves Theorem \ref{T:5carcajada}; Section 3 shows Theorem \ref{T: elteoremauquenuncaquiseescribir}; Section \ref{S:5} verifies Theorem \ref{boundary observability}; Section \ref{S:3} presents the applications of
Theorem \ref{T:5carcajada} and Theorem \ref{boundary observability} and Section \ref{S:8} is an Appendix completeting some of the technical details in the work.

\section{Interior observability}\label{S:2}
 Throughout this section $\Omega$ denotes a bounded domain and $T$ is a positive time. First of all, we recall the following observability estimate or propagation of smallness inequality from measurable sets:
\begin{theorem}\label{T:3}
 Assume that $f:B_{2R}\subset\Rn\longrightarrow\R$ is real-analytic in $B_{2R}$ verifying
\begin{equation*}\label{E: condicion fundament}
|\partial^\alpha f(x)|\le \frac{M |\alpha|!}{(\rho R)^{|\alpha|}}\ ,\  \text{when}\ \ x\in B_{2R}, \ \alpha\in \N^n,
\end{equation*}
for some $M>0$ and $0<\rho\le 1$. Let $E\subset B_R$ be a measurable set with positive measure. Then, there are positive constants $N=N(\rho, |E|/|B_R|)$ and $\theta=\theta(\rho, |E|/|B_R|)$ such that
\begin{equation}\label{E: proagationdelapeque–ezdesdemedible}
\|f\|_{L^\infty(B_R)}\le N\left(\text{\rlap |{$\int_{E}$}}\,|f|\,dx\right)^{\theta}M^{1-\theta}.
\end{equation}
\end{theorem}
 The estimate \eqref{E: proagationdelapeque–ezdesdemedible} is first established in \cite{Vessella} (See also \cite{Nadirashvili2} and \cite{Nadirashvili} for other close results). The reader may find a simpler proof of  Theorem~\ref{T:3} in \cite[\S 3]{ApraizEscauriaza1}, the proof there was  built with ideas taken from \cite{Malinnikova}, \cite{Nadirashvili2} and  \cite{Vessella}.

Theorem \ref{T:3} and the condition \eqref{E: desigualdadespectralocalizada} imply the following:

\begin{theorem}\label{T: 1spectralinequality} Assume that  $\Omega$ verifies \eqref{E: desigualdadespectralocalizada}, $\omega$ is a subset of positive measure such that $\omega\subset B_{R}(x_0)$, with $B_{4R}(x_0)\subset\Omega$, for some $R\in (0,1]$. Then, there is a positive constant $N=N(\Omega,R, |\omega|/|B_R|)$ such that
\begin{equation}\label{E: observar sobre medible}
\|\mathcal E_\lambda  f\|_{L^2(\Omega)}\le Ne^{N\sqrt\lambda}\|\mathcal E_\lambda  f\|_{L^1(\omega)},\;\;\mbox{when}\;\;  f\in L^2(\Omega)\;\;\mbox{and}\;\;\lambda>0.
\end{equation}
\end{theorem}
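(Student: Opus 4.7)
The plan is to combine the spectral inequality \eqref{E: desigualdadespectralocalizada} with the quantitative propagation of smallness from measurable sets in Theorem~\ref{T:3}, both applied to $g=\mathcal E_\lambda f$. Since $g$ is a finite linear combination of the eigenfunctions $e_j$ (with $\lambda_j\le\lambda$), it is real-analytic on $\Omega$; the critical issue is to extract an analytic majorant in the sense of Theorem~\ref{T:3} whose constant $M$ grows only like $e^{C\sqrt\lambda}\|g\|_{L^2(\Omega)}$, with a radius parameter $\rho$ that is independent of $\lambda$.

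For this I would introduce the even harmonic extension
\[
u(x,y)=\sum_{\lambda_j\le\lambda}(f,e_j)\,\cosh\!\bigl(\sqrt{\lambda_j}\,y\bigr)\,e_j(x),\qquad (x,y)\in\Omega\times\R.
\]
A direct computation using $\Delta e_j=-\lambda_j e_j$ gives $(\partial_y^2+\Delta_x)u=0$ in $\Omega\times\R$, and $u(\cdot,0)=g$. Parseval's identity in $x$ yields $\|u(\cdot,y)\|_{L^2(\Omega)}\le\cosh(\sqrt\lambda\,|y|)\,\|g\|_{L^2(\Omega)}$, so
\[
\|u\|_{L^2(B_{4R}(x_0)\times(-1,1))}\le C\,e^{\sqrt\lambda}\|g\|_{L^2(\Omega)}.
\]
Combining the standard $L^\infty$–$L^2$ bound for harmonic functions with Cauchy estimates on a ball in $\R^{n+1}$ of radius of order $R$ around each point $(x,0)$ with $x\in B_{2R}(x_0)$, one then obtains
\[
|\partial^\alpha g(x)|=|\partial_x^\alpha u(x,0)|\le\frac{M\,|\alpha|!}{(\rho R)^{|\alpha|}},\qquad x\in B_{2R}(x_0),\ \alpha\in\N^n,
\]
with $M=C(R)\,e^{\sqrt\lambda}\|g\|_{L^2(\Omega)}$ and $\rho$ a numerical constant coming from the analyticity radius of harmonic functions.

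Applying Theorem~\ref{T:3} to $g$ with $E=\omega\subset B_R(x_0)$ gives
\[
\|g\|_{L^\infty(B_R(x_0))}\le N_0\,|\omega|^{-\theta}\|g\|_{L^1(\omega)}^{\theta}\,M^{1-\theta},
\]
where $N_0$ and $\theta\in(0,1)$ depend only on $\rho$ and $|\omega|/|B_R|$. On the other hand, \eqref{E: desigualdadespectralocalizada} applied to $g=\mathcal E_\lambda f$, together with $\|g\|_{L^2(B_R(x_0))}\le|B_R|^{1/2}\|g\|_{L^\infty(B_R(x_0))}$, produces $\|g\|_{L^2(\Omega)}\le N\,e^{N\sqrt\lambda}|B_R|^{1/2}\|g\|_{L^\infty(B_R(x_0))}$. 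Substituting, inserting the bound on $M$, and absorbing the resulting factor $\|g\|_{L^2(\Omega)}^{1-\theta}$ on the left-hand side (the inequality is trivial if $g\equiv 0$), one ends with
\[
\|g\|_{L^2(\Omega)}^{\theta}\le C\,e^{C\sqrt\lambda}\|g\|_{L^1(\omega)}^{\theta},
\]
which, after raising to the $1/\theta$ power, is exactly \eqref{E: observar sobre medible}.

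The main technical obstacle is the derivation of the analytic majorant for $g$ with constant $M$ depending only linearly on $e^{\sqrt\lambda}\|g\|_{L^2(\Omega)}$ and with $\rho$ independent of $\lambda$. A naive bound based on Sobolev embedding and iteration of $\Delta g=-\sum_{\lambda_j\le\lambda}\lambda_j(f,e_j)e_j$ would produce polynomial-in-$\lambda$ factors instead of the factorial structure required by Theorem~\ref{T:3}; the harmonic extension in one extra variable is precisely what converts the $\lambda_j$-factors into the tame exponential $e^{\sqrt\lambda}$ while recovering analyticity in the Cauchy sense. Once this analytic estimate is in place, the remaining steps are essentially bookkeeping.
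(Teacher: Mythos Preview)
Your argument is correct and follows essentially the same route as the paper: harmonic extension in one extra variable to obtain Cauchy-type analytic bounds on $\mathcal E_\lambda f$ with $M\sim e^{C\sqrt\lambda}\|\mathcal E_\lambda f\|_{L^2(\Omega)}$, then Theorem~\ref{T:3}, then the spectral inequality \eqref{E: desigualdadespectralocalizada}. The only cosmetic differences are that the paper uses $e^{\sqrt{\lambda_j}\,y}$ rather than $\cosh(\sqrt{\lambda_j}\,y)$ for the extension, and bounds $\|u\|_{L^\infty(\Omega\times(-4,4))}$ via a subharmonic extension (or local elliptic estimates) instead of your direct Parseval bound on $\|u(\cdot,y)\|_{L^2(\Omega)}$; neither change affects the structure or the outcome.
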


\begin{proof} Without loss of generality we may assume $x_0=0$. Because $B_{4R}\subset \Omega$  and  \eqref{E: desigualdadespectralocalizada} stands,  there is $N=N(\Omega, R)$ such that
\begin{equation}\label{E: desigualdadespectralocalizada2}
\|\mathcal E_\lambda  f\|_{L^2(\Omega)}\le Ne^{N\sqrt\lambda}\|\mathcal E_\lambda  f\|_{L^2(B_R)},\;\;\mbox{when}\;\; f\in L^2(\Omega)\;\;\mbox{and}\;\; \lambda>0.
\end{equation}
For $f\in L^2(\Omega)$ arbitrarily given, define
\begin{equation*}
u(x,y)=\sum_{\lambda_j\le\lambda}( f,e_j)e^{\sqrt{\lambda_j}y}e_j.
\end{equation*}
One can verify that $\Delta u+\partial^2_yu=0$ in $B_{4R}(0,0)\subset\Omega\times\R$. Hence, there are $N=N(n)$ and $\rho=\rho(n)$ such that
\begin{equation*}\label{E: fundamental}
\|\partial^\alpha_x\partial_y^\beta u\|_{L^\infty(B_{2R}(0,0))}\le \frac{N(|\alpha|+\beta)!}{(R\rho)^{|\alpha |+\beta}}\left(\text{\rlap |{$\int_{B_{4R}(0,0)}$}}|u|^2\,dxdy\right)^{\frac 12},\ \text{when}\ \alpha\in\N^n, \beta\ge 1.
\end{equation*}
For the later see \cite[Chapter 5]{Morrey}, \cite[Chapter 3]{FJohn2}. Thus, $\mathcal E_\lambda f$ is a real-analytic function in $B_{2R}$, with the estimates:
\begin{equation*}\label{E: fundamental1}
\|\partial^\alpha_x \mathcal E_\lambda f\|_{L^\infty(B_{2R})}\le N|\alpha|! (R\rho)^{-|\alpha|} \|u\|_{L^\infty(\Omega\times (-4,4))},\ \text{for}\ \alpha\in \N^n.
\end{equation*}
By either extending $|u|$ as zero outside of $\Omega\times\R$, which turns $|u|$ into a subharmonic function in $\R^{n+1}$ or  the local properties of solutions to elliptic equations \cite[Theorems 8.17, 8.25]{GilbargTrudinger} and the orthonormality of $\{e_j: j\ge 1\}$ in $\Omega$, there is $N=N(\Omega)$ such that
\begin{equation*}
\|u\|_{L^\infty(\Omega\times (-4,4))}\le N\|u\|_{L^2(\Omega\times (-5,5))}\le Ne^{N\sqrt\lambda}\|\mathcal E_\lambda f\|_{L^2(\Omega)}.
\end{equation*}
The last two inequalities show that
\begin{equation*}\label{E: fundamental2}
\|\partial^\alpha_x \mathcal E_\lambda f\|_{L^\infty(B_{2R})}\le Ne^{N\sqrt{\lambda}}|\alpha|!\left(R\rho\right)^{-|\alpha |}\|\mathcal E_\lambda f\|_{L^2(\Omega)},\ \text{for}\ \alpha\in\N^n,
\end{equation*}
with $N$ and $\rho$ as above. In particular, $\mathcal E_{\lambda} f$ verifies the hypothesis in Theorem \ref{T:3} with
\begin{equation*}
M=Ne^{N\sqrt{\lambda}}\|\mathcal E_\lambda f\|_{L^2(\Omega)},
\end{equation*}
and there are $N=N(\Omega, R, |\omega|/|B_R|)$ and $\theta=\theta(\Omega, R, |\omega|/|B_R|)$ with
\begin{equation}\label{E: lainterpolacion}
\|\mathcal E_\lambda f\|_{L^\infty(B_R)}\le Ne^{N\sqrt{\lambda}}\|\mathcal E_\lambda f\|_{L^1(\omega)}^\theta\|\mathcal E_\lambda f\|_{L^2(\Omega)}^{1-\theta}.
\end{equation}
Now, the estimate (\ref{E: observar sobre medible})  follows from \eqref{E: desigualdadespectralocalizada2} and \eqref{E: lainterpolacion}.
\end{proof}

\begin{theorem}\label{T: 2global 2-sphere 1-cylinder}
Let $\Omega$, $x_0$, $R$ and $\omega$ be as in Theorem \ref{T: 1spectralinequality}. Then, there are $N=N(\Omega, R, |\omega|/|B_R|)$ and $\theta=\theta(\Omega, R, |\omega|/|B_R|)\in (0,1)$, such that
\begin{equation}\label{E: dos esferas un cilindros desigualdad}
\|e^{t\Delta} f\|_{L^2(\Omega)}\le \left(Ne^{\frac N{t-s}}\|e^{t\Delta} f\|_{L^1(\omega)}\right)^\theta\|e^{s\Delta} f\|_{L^2(\Omega)}^{1-\theta},
\end{equation}
when $0\le s<t$ and $f\in L^2(\Omega)$.
\end{theorem}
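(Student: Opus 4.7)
The plan is to combine Theorem \ref{T: 1spectralinequality} with the exponential $L^2$-decay of the high-frequency modes of the heat semigroup, and then balance the spectral cutoff $\lambda$ against the ratio $\|e^{s\Delta}f\|_{L^2(\Omega)}/\|e^{t\Delta}f\|_{L^1(\omega)}$.

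Write $u(\tau)=e^{\tau\Delta}f$ and decompose $u(t)=\mathcal E_\lambda u(t)+\mathcal E_\lambda^\perp u(t)$. First apply Theorem \ref{T: 1spectralinequality} to the low-frequency part to get
\[
\|\mathcal E_\lambda u(t)\|_{L^2(\Omega)}\le Ne^{N\sqrt\lambda}\|\mathcal E_\lambda u(t)\|_{L^1(\omega)},
\]
dominate the right-hand norm by $\|u(t)\|_{L^1(\omega)}+|\omega|^{1/2}\|\mathcal E_\lambda^\perp u(t)\|_{L^2(\Omega)}$ via the triangle inequality and Cauchy--Schwarz, and then use Parseval to see that $\|\mathcal E_\lambda^\perp u(t)\|_{L^2(\Omega)}\le e^{-\lambda(t-s)}\|u(s)\|_{L^2(\Omega)}$. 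Adding back the high-frequency piece through $\|u(t)\|_{L^2(\Omega)}\le\|\mathcal E_\lambda u(t)\|_{L^2(\Omega)}+\|\mathcal E_\lambda^\perp u(t)\|_{L^2(\Omega)}$ and using Young's inequality $N\sqrt\lambda\le\tfrac14\lambda(t-s)+N^2/(t-s)$ to absorb the cross term produces
\[
\|u(t)\|_{L^2(\Omega)}\le Ne^{N\sqrt\lambda}\|u(t)\|_{L^1(\omega)}+Ce^{C/(t-s)}e^{-\lambda(t-s)/2}\|u(s)\|_{L^2(\Omega)}\quad\text{for all }\lambda>0,
\]
with $C=C(\Omega,R,|\omega|/|B_R|)$.

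To extract the multiplicative form \eqref{E: dos esferas un cilindros desigualdad}, I would apply the same Young bound to the first exponential and set $\mu=\lambda(t-s)/4$, reducing the estimate to $A\le Le^{\mu}X+Ke^{-2\mu}Y$ for all $\mu\ge 0$, with $A,X,Y$ abbreviating the three norms and $L,K$ of the form $\widetilde Ce^{\widetilde C/(t-s)}$. The right side is minimized at $e^{3\mu^\ast}=2KY/(LX)$; when $2KY\ge LX$ the minimizer $\mu^\ast$ is admissible, and plugging it back yields $A\le C'L^{2/3}K^{1/3}X^{2/3}Y^{1/3}\le\widehat Ce^{\widehat C/(t-s)}X^{2/3}Y^{1/3}$. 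In the remaining regime $2KY<LX$, the $L^2$-contractivity of the heat semigroup gives $A\le Y\le(LX/(2K))^{2/3}Y^{1/3}$, which is again dominated by a bound of the same form. Choosing $\theta=2/3$ and absorbing all constants into a single $Ne^{N/(t-s)}$ casts the estimate in the desired form.

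The main obstacle is precisely this last step: converting the sub-exponential factor $e^{N\sqrt\lambda}$ coming from the spectral inequality into a genuine power interpolation between the observation norm $X$ and the past norm $Y$. Young's inequality trades $\sqrt\lambda$ for a small fraction of $\lambda$ at the cost of the $e^{C/(t-s)}$ prefactor, which accounts for the blow-up of the constant as $t\downarrow s$, and the contraction $\|e^{t\Delta}f\|_{L^2(\Omega)}\le\|e^{s\Delta}f\|_{L^2(\Omega)}$ is what removes the need for an additive correction term in the degenerate regime where the formal optimum in $\lambda$ would be negative.
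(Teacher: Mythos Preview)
Your argument is correct and follows essentially the same route as the paper: spectral decomposition, the low-frequency bound from Theorem \ref{T: 1spectralinequality}, the high-frequency decay $\|\mathcal E_\lambda^\perp e^{t\Delta}f\|_{L^2}\le e^{-\lambda(t-s)}\|e^{s\Delta}f\|_{L^2}$, a Young-type trade of $e^{N\sqrt\lambda}$ for $e^{C/(t-s)}e^{c\lambda(t-s)}$, and then optimization in $\lambda$ together with the semigroup contraction to cover the degenerate regime. The only cosmetic difference is the parametrization of the optimization (your $\mu=\lambda(t-s)/4$ versus the paper's $\epsilon=e^{-\lambda(t-s)}$), which yields your fixed $\theta=2/3$ in place of the paper's $\theta=1/(N+1)$; either choice proves the stated theorem.
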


\begin{proof}
Let $0\le s<t$ and $f\in L^2(\Omega)$. Since
\begin{equation*}
\|e^{t\Delta}\mathcal E_\lambda^\perp f\|_{L^2(\Omega)}\le e^{-\lambda (t-s)}\|e^{s\Delta}f\|_{L^2(\Omega)},\ \text{when}\ f\in L^2(\Omega),
\end{equation*}
it follows from Theorem \ref{T: 1spectralinequality} that
\begin{equation*}
\begin{split}
\| e^{t\Delta} f\|_{L^2(\Omega)}
&\le \| e^{t \Delta}\mathcal E_\lambda f\|_{L^2(\Omega)}+\| e^{t \Delta}{\mathcal E}_\lambda^\perp f\|_{L^2(\Omega)} \\
&\le Ne^{N\sqrt\lambda}\|e^{t \Delta}\mathcal E_\lambda f\|_{L^1(\omega)}+e^{-\lambda (t-s)}\|e^{s\Delta}f\|_{L^2(\Omega)} \\
&\le Ne^{N\sqrt\lambda}\left[\|e^{t \Delta} f\|_{L^1(\omega)}+\|e^{t \Delta}\mathcal E_\lambda^\perp f\|_{L^2(\omega)}\right]+e^{-\lambda (t-s)}\|e^{s\Delta}f\|_{L^2(\Omega)}\\
&\le Ne^{N\sqrt\lambda}\left[\|e^{t \Delta} f\|_{L^1(\omega)}+e^{-\lambda (t-s)}\|e^{s\Delta}f\|_{L^2(\Omega)}\right].
\end{split}
\end{equation*}
Consequently, it holds that
\begin{equation}\label{E: llegandoalfinal}
\| e^{t \Delta} f\|_{L^2(\Omega)}\le Ne^{N\sqrt\lambda}\left[\|e^{t \Delta} f\|_{L^1(\omega)}+e^{-\lambda (t-s)}\|e^{s\Delta}f\|_{L^2(\Omega)}\right].
\end{equation}
Because
\begin{equation*}\label{E: ladesigualdaddesiempre}
\max_{\lambda\ge 0} e^{A\sqrt\lambda-\lambda (t-s)}\le e^{\frac {N(A)}{t-s}},\ \text{for all}\ A>0,
\end{equation*}
 it follows from \eqref{E: llegandoalfinal}  that for each $\lambda> 0$,
\begin{equation*}\label{E: masdesigualdesdrr}
\| e^{t\Delta} f\|_{L^2(\Omega)}\le\\ Ne^{\frac{N}{t-s}}\left[e^{N\lambda(t-s)}\|e^{t \Delta} f\|_{L^1(\omega)}+e^{-\lambda (t-s)/N}\|e^{s\Delta}f\|_{L^2(\Omega)}\right].
\end{equation*}
Setting $\epsilon= e^{-\lambda(t-s)}$ in the above estimate shows that the inequality
\begin{equation}\label{E: llegandoalfunal}
\| e^{t \Delta} f\|_{L^2(\Omega)}\le Ne^{\frac{N}{t-s}}\left[\epsilon^{-N}\|e^{t \Delta} f\|_{L^1(\omega)}\,+\epsilon\|e^{s\Delta}f\|_{L^2(\Omega)}\right],
\end{equation}
holds, for all $0<\epsilon\le 1$. The minimization of the right hand in \eqref{E: llegandoalfunal} for $\epsilon$ in $(0,1)$, as well as the fact that
\begin{equation*}
\| e^{t\Delta} f\|_{L^2(\Omega)}\le \|e^{s\Delta}f\|_{L^2(\Omega)},\ \text{when}\ t>s,
\end{equation*}
implies Theorem \ref{T: 2global 2-sphere 1-cylinder}.
\end{proof}

\begin{remark}\label{R:primero}
{\it Theorem \ref{T: 2global 2-sphere 1-cylinder} shows that the observability or spectral elliptic inequality \eqref{E: observar sobre medible} implies the inequality \eqref{E: dos esferas un cilindros desigualdad}. In particular, the elliptic spectral inequality \eqref{E: desigualdadespectralocalizada} implies the inequality:
\begin{equation}\label{E: dos esferas un cilindros desigualdad2}
\|e^{t\Delta} f\|_{L^2(\Omega)}\le \left(Ne^{\frac N{t-s}}\|e^{t\Delta} f\|_{L^2(B_R(x_0))}\right)^\theta\|e^{s\Delta} f\|_{L^2(\Omega)}^{1-\theta},
\end{equation}
when $0\le s<t$, $B_{4R}(x_0)\subset\Omega$ and $f\in L^2(\Omega)$. In fact, both \eqref{E: observar sobre medible} and  \eqref{E: dos esferas un cilindros desigualdad} or \eqref{E: desigualdadespectralocalizada} and \eqref{E: dos esferas un cilindros desigualdad2} are equivalent, for if \eqref{E: dos esferas un cilindros desigualdad} holds, take $f=\sum_{\lambda_j\le\lambda}e^{\lambda_j /\sqrt{\lambda}}a_j e_j$, $s=0$ and $t=1/\sqrt{\lambda}$ in \eqref{E: dos esferas un cilindros desigualdad} to derive that
\begin{equation*}\label{E: dos esferas un cilindros desigualdad3}
\Big(\sum_{\lambda_j\le\lambda}a_j^2\Big)^{\frac 12}\le Ne^{N\sqrt\lambda}\Big\| \sum_{\lambda_j\le\lambda}a_j e_j\Big\|_{L^1(\omega)},\ \text{when}\ a_j\in\R,\ j\ge 1, \lambda >0.
\end{equation*}

The interested reader may want here to compare the previous claims, Theorem \ref{T: elteoremauquenuncaquiseescribir} and \cite[Proposition 2.2]{PhungWang1}.}
\end{remark}
\begin{lemma}\label{L: Fubini}  Let  $B_R(x_0)\subset\Omega$ and $\mathcal D\subset B_R(x_0)\times (0,T)$  be a subset of positive measure. Set
\begin{equation*}
\mathcal D_t=\{x\in \Omega : (x,t)\in\mathcal D\},\ E=\{t\in (0,T): |\mathcal D_t |\ge |\mathcal D|/(2T)\},\ t\in (0,T).
\end{equation*}
Then, $\mathcal D_t\subset\Omega$ is measurable for a.e. $t\in (0,T)$, $E$ is measurable in $(0,T)$, $|E|\ge |\mathcal D|/2|B_R|$ and
\begin{equation}\label{E: desigualdadgeometrica}
\chi_E(t)\chi_{\mathcal D_t}(x)\le \chi_{\mathcal D}(x,t),\ \text{in}\ \Omega\times (0,T).
\end{equation}
\end{lemma}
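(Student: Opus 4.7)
The plan is to prove the four claims in the order they are listed, relying on Tonelli's theorem and an elementary splitting argument.

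First, since $\mathcal D$ is a measurable subset of $\Omega\times(0,T)$, Tonelli's theorem gives that the $t$-slice $\mathcal D_t$ is Lebesgue measurable in $\Omega$ for almost every $t\in(0,T)$, and that the function $t\mapsto|\mathcal D_t|$ is measurable on $(0,T)$. Measurability of $E$ then follows immediately because $E$ is the preimage of $[|\mathcal D|/(2T),\infty)$ under this measurable function.

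Next, I would bound $|E|$ from below using Tonelli to write
\begin{equation*}
|\mathcal D|=\int_0^T|\mathcal D_t|\,dt=\int_E|\mathcal D_t|\,dt+\int_{(0,T)\setminus E}|\mathcal D_t|\,dt.
\end{equation*}
On the complement of $E$, the definition forces $|\mathcal D_t|<|\mathcal D|/(2T)$, so the second integral is strictly less than $|\mathcal D|/2$. On $E$, the inclusion $\mathcal D_t\subset B_R(x_0)$ gives $|\mathcal D_t|\le|B_R|$, so the first integral is at most $|E|\,|B_R|$. Combining these two bounds and rearranging yields $|E|\ge|\mathcal D|/(2|B_R|)$, which is the desired estimate.

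Finally, the pointwise inequality \eqref{E: desigualdadgeometrica} is essentially tautological: if the left hand side equals $1$ at $(x,t)$, then $t\in E$ and $x\in\mathcal D_t$, and by the very definition of $\mathcal D_t$ this means $(x,t)\in\mathcal D$, so $\chi_{\mathcal D}(x,t)=1$. There is no real obstacle in this lemma; the only point that requires care is ensuring that the split of $|\mathcal D|$ into the $E$ and $(0,T)\setminus E$ pieces uses the correct threshold $|\mathcal D|/(2T)$, so that the contribution away from $E$ is genuinely absorbed into $|\mathcal D|/2$.
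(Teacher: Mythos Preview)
Your proof is correct and follows essentially the same approach as the paper: the paper's proof is the single line
\[
|\mathcal D|=\int_0^T|\mathcal D_t|\,dt=\int_E|\mathcal D_t|\,dt+\int_{[0,T]\setminus E}|\mathcal D_t|\,dt\le |B_R||E|+|\mathcal D|/2,
\]
which is exactly your splitting argument, and the measurability claims and the pointwise inequality are left implicit there. Your write-up simply makes explicit what the paper takes for granted.
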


\begin{proof} From Fubini's theorem,
\begin{equation*}
|\mathcal D|=\int_0^T|\mathcal D_t|\,dt=\int_E|\mathcal D_t|\,dt+\int_{[0,T]\setminus E}|\mathcal D_t|\,dt\le|B_R||E|+|\mathcal D|/2.
\end{equation*}
\end{proof}

\begin{theorem}\label{T:4carcajada} Let  $x_0\in \Omega$ and $R\in (0,1]$ be such that $B_{4R}(x_0)\subset\Omega$. Let  $\mathcal D\subset B_R(x_0)\times (0,T)$ be a measurable set with $|\mathcal D|>0$. Write $E$ and $\mathcal D_t$ for the sets associated to $\mathcal D$ in Lemma \ref{L: Fubini}. Then, for each $\eta\in (0,1)$,
   there are $N=N(\Omega,R, |\mathcal D|/\left(T|B_R|\right),\eta)$ and $\theta=\theta(\Omega,R, |\mathcal D|/\left(T|B_R|\right),\eta)$ with $\theta\in (0,1)$, such that
\begin{equation}\label{E: unajotafantastica}
\|e^{t_2\Delta}f\|_{L^2(\Omega)} \le \left( N e^{N/(t_2-t_1)} \int_{t_1}^{t_2}\chi_E(s) \|e^{s\Delta}f\|_{L^1(\mathcal D_s)}\,ds \right)^{\theta}\|e^{t_1\Delta}f\|_{L^2(\Omega)}^{1-\theta},
\end{equation}
when $0\le t_1<t_2\le T$, $|E\cap (t_1,t_2)|\ge \eta (t_2-t_1)$ and $f\in L^2(\Omega)$. Moreover,
\begin{equation}\label{E: hayqueverqiebiensalestovayajuergamaravi}
\begin{split}
&e^{-\frac{N+1-\theta}{t_2-t_1}}\|e^{t_2\Delta}f\|_{L^2(\Omega)}- e^{-\frac{N+1-\theta}{q\left(t_2-t_1\right)}}\|e^{t_1\Delta}f\|_{L^2(\Omega)}\\
&\le N\int_{t_1}^{t_2}\chi_E(s) \|e^{s\Delta}f\|_{L^1(\mathcal D_s)}\,ds,\;\;\mbox{when}\;\;q\ge (N+1-\theta)/(N+1).
\end{split}
\end{equation}
\end{theorem}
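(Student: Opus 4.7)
The plan is to establish~(\ref{E: unajotafantastica}) first, via a pointwise-in-time application of Theorem~\ref{T: 2global 2-sphere 1-cylinder} combined with an averaging argument over the temporal set $E$, and then to deduce~(\ref{E: hayqueverqiebiensalestovayajuergamaravi}) from it by an elementary Young-inequality computation.

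For~(\ref{E: unajotafantastica}), I would start by noting that for every $\tau\in E$ one has $\mathcal D_\tau\subset B_R(x_0)$ and $|\mathcal D_\tau|\ge|\mathcal D|/(2T)$, so the ratio $|\mathcal D_\tau|/|B_R|$ is bounded below by $|\mathcal D|/(2T|B_R|)$ \emph{uniformly} in $\tau$. Applying Theorem~\ref{T: 2global 2-sphere 1-cylinder} with $\omega=\mathcal D_\tau$ at the time pair $(t_1,\tau)$ therefore yields a \emph{single} pair $N,\theta$ (depending only on $\Omega$, $R$ and $|\mathcal D|/(T|B_R|)$) such that
$$
\|e^{\tau\Delta}f\|_{L^2(\Omega)}\le\bigl(Ne^{N/(\tau-t_1)}\|e^{\tau\Delta}f\|_{L^1(\mathcal D_\tau)}\bigr)^{\theta}\|e^{t_1\Delta}f\|_{L^2(\Omega)}^{1-\theta}.
$$
The singular factor $e^{N/(\tau-t_1)}$ forces a time localization: I would restrict $\tau$ to the right subinterval $I:=\bigl(t_1+\tfrac{\eta}{2}(t_2-t_1),\,t_2\bigr)$, on which $e^{N/(\tau-t_1)}\le e^{2N/(\eta(t_2-t_1))}$. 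Since $|E\cap(t_1,t_2)|\ge\eta(t_2-t_1)$, a length-subtraction gives $|E\cap I|\ge\tfrac{\eta}{2}(t_2-t_1)$. Using the contractivity $\|e^{t_2\Delta}f\|_{L^2(\Omega)}\le\|e^{\tau\Delta}f\|_{L^2(\Omega)}$ to replace the LHS, dividing by $\|e^{t_1\Delta}f\|_{L^2(\Omega)}^{1-\theta}$, raising to the power $1/\theta$, integrating $\tau$ over $E\cap I$ and finally raising back to the $\theta$-power then yields~(\ref{E: unajotafantastica}); the algebraic factor $[\eta(t_2-t_1)/2]^{-\theta}$ is absorbed into $e^{N/(t_2-t_1)}$ using the elementary bound $-x\log x\le 1/e$ for $x>0$.

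For~(\ref{E: hayqueverqiebiensalestovayajuergamaravi}), starting from~(\ref{E: unajotafantastica}) I would apply the weighted Young inequality
$$X^\theta Y^{1-\theta}\le\theta\mu X+(1-\theta)\mu^{-\theta/(1-\theta)}Y,\qquad\mu>0,$$
with $X=Ne^{N/(t_2-t_1)}\int_{t_1}^{t_2}\chi_E(s)\|e^{s\Delta}f\|_{L^1(\mathcal D_s)}\,ds$, $Y=\|e^{t_1\Delta}f\|_{L^2(\Omega)}$, and $\mu=\theta^{-1}e^{(1-\theta)/(t_2-t_1)}$. This choice makes $\theta\mu\cdot Ne^{N/(t_2-t_1)}=Ne^{(N+1-\theta)/(t_2-t_1)}$, while the elementary bound $(1-\theta)\theta^{\theta/(1-\theta)}\le 1$ on $(0,1)$ forces $(1-\theta)\mu^{-\theta/(1-\theta)}\le e^{-\theta/(t_2-t_1)}$. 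Multiplying the resulting inequality by $e^{-(N+1-\theta)/(t_2-t_1)}$ then produces~(\ref{E: hayqueverqiebiensalestovayajuergamaravi}) in the sharpest case $q=(N+1-\theta)/(N+1)$, for which $(N+1-\theta)/(q(t_2-t_1))=(N+1)/(t_2-t_1)$; for larger $q$ the coefficient $e^{-(N+1-\theta)/(q(t_2-t_1))}$ only grows, so the inequality persists.

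The main obstacle is the first stage, and specifically the fact that in Theorem~\ref{T: 2global 2-sphere 1-cylinder} the observation set $\omega=\mathcal D_\tau$ must vary with $\tau$; the uniformity of the constants $N,\theta$ in that application is what forces one to work on the auxiliary set $E$ supplied by Lemma~\ref{L: Fubini}. The restriction to the right half-interval $I$ is a standard time-localization trick but is also essential, because without it the singularity $e^{N/(\tau-t_1)}$ would not be integrable near $\tau=t_1$. By contrast, the deduction of~(\ref{E: hayqueverqiebiensalestovayajuergamaravi}) from~(\ref{E: unajotafantastica}) is routine once the correct value of $\mu$ has been guessed.
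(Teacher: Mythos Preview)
Your proposal is correct and follows essentially the same approach as the paper. The only cosmetic differences are that, for~\eqref{E: unajotafantastica}, the paper integrates the pointwise estimate directly and then applies H\"older's inequality with exponent $1/\theta$ (whereas you raise to the $1/\theta$-power first, integrate, and raise back), and for~\eqref{E: hayqueverqiebiensalestovayajuergamaravi} the paper parametrizes Young's inequality by $\epsilon$ and then sets $\epsilon=e^{-1/(t_2-t_1)}$ rather than choosing your $\mu$ directly; both variants lead to the same conclusion.
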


\begin{proof}  After removing from $E$ a set with zero Lebesgue measure, we may assume that $\mathcal D_t$ is measurable for all $t$ in $E$. From Lemma \ref{L: Fubini}, $\mathcal D_t\subset B_R(x_0)$, $B_{4R}(x_0)\subset\Omega$ and $|\mathcal D_t|/|B_R|\ge |\mathcal D|/(2T|B_R|)$, when $t$ is in $E$. From Theorem \ref{T: 2global 2-sphere 1-cylinder}, there are $N=N(\Omega,R, |\mathcal D|/\left(T|B_R|\right))$ and $\theta=\theta(\Omega,R, |\mathcal D|/\left(T|B_R|\right))$ such that
\begin{equation}\label{E: globalssphere1cylinder}
\|e^{t\Delta} f\|_{L^2(\Omega)}\le \left(Ne^{\frac N{t-s}}\|e^{t\Delta} f\|_{L^1(\mathcal D_t)}\right)^\theta\|e^{s\Delta} f\|_{L^2(\Omega)}^{1-\theta},
\end{equation}
when $0\le s<t$, $t\in E$ and $f\in L^2(\Omega)$. Let $\eta\in (0,1)$ and $0\le t_1<t_2\le T$ satisfy $|E\cap (t_1,t_2)|\ge \eta (t_2-t_1)$. Set $\tau=t_1+\frac\eta 2\,\left(t_2-t_1\right)$. Then
\begin{equation}\label{E: necesarisimo}
|E\cap (\tau,t_2)|=|E\cap (t_1,t_2)|-|E\cap (t_1,\tau)|\ge \frac\eta 2 (t_2-t_1).
\end{equation}
From  \eqref{E: globalssphere1cylinder} with $s=t_1$ and the decay property of $\|e^{t\Delta}f\|_{L^2(\Omega)}$, we get
\begin{equation}\label{E: 2globalssphere1cylinder}
\|e^{t_2\Delta} f\|_{L^2(\Omega)}\le \left(Ne^{\frac N{t_2-t_1}}\|e^{t\Delta} f\|_{L^1(\mathcal D_t)}\right)^\theta\|e^{t_1\Delta} f\|_{L^2(\Omega)}^{1-\theta},\ t\in E\cap (\tau,t_2).
\end{equation}
 The inequality \eqref{E: unajotafantastica} follows from  the integration with respect to $t$ of \eqref{E: 2globalssphere1cylinder} over $E\cap (\tau,t_2)$,  H\"older's inequality with $p=1/\theta$ and \eqref{E: necesarisimo}.

The inequality \eqref{E: unajotafantastica} and Young's inequality imply that
\begin{equation}\label{E: quebienfunciona2}
\begin{split}
&\|e^{t_2\Delta}f\|_{L^2(\Omega)} \le \\
&\e \|e^{t_1\Delta}f\|_{L^2(\Omega)} +\e^{-\frac{1-\theta}\theta} N e^{\frac{N}{t_2-t_1}} \int_{t_1}^{t_2}\chi_E(s) \|e^{s\Delta}f\|_{L^1(\mathcal D_s)}\,ds,\;\;\mbox{when}\;\;\e>0.
\end{split}
\end{equation}
 Multiplying first \eqref{E: quebienfunciona2} by $\e^{\frac{1-\theta}\theta} e^{-\frac{N}{t_2-t_1}}$ and then replacing $\e$ by $\e^\theta$, we get that
\begin{equation*}
\begin{split}\label{E: hayqueverqiebiensalesto}
&\e^{1-\theta} e^{-\frac{N}{\left(t_2-t_1\right)}} \|e^{t_2\Delta}f\|_{L^2(\Omega)}- \e\, e^{-\frac{N}{\left(t_2-t_1\right)}}\|e^{t_1\Delta}f\|_{L^2(\Omega)}\\
&\le N\int_{t_1}^{t_2}\chi_E(s) \|e^{s\Delta}f\|_{L^1(\mathcal D_s)}\,ds,\;\;\mbox{when}\;\;\e>0.
\end{split}
\end{equation*}
 Choosing  $\e=e^{-\frac 1{t_2-t_1}}$ in the above inequality leads to
 \begin{equation*}\label{E: hayqueverqiebiensalestovayajuerga}
 \begin{split}
&e^{-\frac{N+1-\theta}{\left(t_2-t_1\right)}} \|e^{t_2\Delta}f\|_{L^2(\Omega)}- e^{-\frac{N+1}{\left(t_2-t_1\right)}}\|e^{t_1\Delta}f\|_{L^2(\Omega)}\\
&\le N\int_{t_1}^{t_2}\chi_E(s) \|e^{s\Delta}f\|_{L^1(\mathcal D_s)}\,ds.
\end{split}
\end{equation*}
This implies \eqref{E: hayqueverqiebiensalestovayajuergamaravi}, for $q\ge \frac{N+1-\theta}{N+1}$.
\end{proof}

The reader can find the proof of the following  Lemma \ref{L: 2teoriamedida} in either \cite[pp. 256-257]{JLLions} or  \cite[Proposition 2.1]{PhungWang1}.
\begin{lemma}\label{L: 2teoriamedida} Let $E$ be a subset of positive measure in $(0,T)$. Let  $l$ be  a density point of E. Then, for each $z > 1$, there is  $l_1=l_1(z,E)$  in $(l, T)$ such that, the sequence $\{l_{m}\}$ defined as
\begin{equation*}
l_{m+1}= l+z^{-m}\left(l_1-l\right),\ m=1, 2,\cdots,
\end{equation*}
verifies
\begin{equation}\label{E: intervalosbienencajados}
|E\cap (l_{m+1}, l_m)|\ge \frac 13 \left(l_m-l_{m+1}\right),\ \text{when}\ m\ge 1.
\end{equation}
\end{lemma}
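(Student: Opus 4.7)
The plan is to choose $l_1$ so close to the density point $l$ that the Lebesgue density theorem forces the required lower bound on every interval of the geometric sequence. Writing $r_m := l_m - l$, the recursion gives $r_m = z^{-(m-1)}(l_1 - l)$, so each interval $(l_{m+1}, l_m)$ is contained in $(l, l + r_m)$ and has length $(1 - 1/z)\, r_m$.

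Since $l$ is a density point of $E$, $|E^c \cap (l, l+r)|/r \to 0$ as $r \to 0^+$. I would set $\delta := \tfrac{2}{3}(1 - 1/z)$, which is positive because $z > 1$, and invoke the density property to pick $r_* > 0$ so small that $|E^c \cap (l, l+r)| \le \delta r$ for all $r \in (0, r_*]$. Then I choose any $l_1 \in (l, T)$ with $l_1 - l \le r_*$, which fixes the whole sequence $\{l_m\}$.

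With this choice, $r_m \le r_*$ for every $m \ge 1$, and hence
\[
|E \cap (l_{m+1}, l_m)| \ge (l_m - l_{m+1}) - |E^c \cap (l, l + r_m)| \ge (1 - 1/z)\,r_m - \delta r_m = \tfrac{1}{3}(1 - 1/z)\,r_m = \tfrac{1}{3}(l_m - l_{m+1}),
\]
which is exactly \eqref{E: intervalosbienencajados}. I do not anticipate a genuine obstacle: the statement is a quantitative packaging of the Lebesgue density theorem, and the only delicate point is tuning $\delta$ relative to $1 - 1/z$ so that the $E^c$-content of $(l, l + r_m)$ leaves at least one third of the interval length for $E$. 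The value $\delta = \tfrac{2}{3}(1 - 1/z)$ is the threshold and reduces the verification to the one-line display above.
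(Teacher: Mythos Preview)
Your argument is correct and is essentially the standard proof: the paper does not give its own argument but refers to \cite[pp.~256--257]{JLLions} and \cite[Proposition~2.1]{PhungWang1}, where the same idea---using the one-sided consequence of the Lebesgue density theorem at $l$ to bound $|E^c\cap(l,l+r_m)|$ by a small multiple of $r_m$ and then subtracting---is carried out. Your tuning $\delta=\tfrac{2}{3}(1-1/z)$ is exactly the right threshold, and the one-line computation closes the proof cleanly.
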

\vskip 10pt

 \begin{proof} [Proof of Theorem~\ref{T:5carcajada}] Let $E$ and $\mathcal D_t$ be the sets associated to $\mathcal D$ in Lemma \ref{L: Fubini} and $l$ be a density point in $E$. For $z>1$ to be fixed later, $\{l_m\}$ denotes the sequence associated to $l$ and $z$ in Lemma \ref{L: 2teoriamedida}. Because \eqref{E: intervalosbienencajados} holds,
we may apply Theorem \ref{T:4carcajada},  with $\eta=1/3$, $t_1=l_{m+1}$ and $t_2=l_{m}$, for  each $m\ge 1$, to get that there are $N=N(\Omega,R, |\mathcal D|/\left(T|B_R|\right))>0$  and $\theta=\theta(\Omega,R, |\mathcal D|/\left(T|B_R|\right))$, with $\theta\in (0,1)$, such that
\begin{equation}\label{E: hayqueverqiebiensalestovayajuergamaravicasi}
\begin{split}
&e^{-\frac{N+1-\theta}{l_m-l_{m+1}}}\|e^{l_m\Delta}f\|_{L^2(\Omega)}- e^{-\frac{N+1-\theta}{q\left(l_m-l_{m+1}\right)}}\|e^{l_{m+1}\Delta}f\|_{L^2(\Omega)}\\
&\le N\int_{l_{m+1}}^{l_{m}}\chi_E(s) \|e^{s\Delta}f\|_{L^1(\mathcal D_s)}\,ds,\;\;\mbox{when}\;\;q\ge \frac{N+1-\theta}{N+1}\;\;\mbox{and}\;\;m\ge 1.
\end{split}
\end{equation}
 Setting $z=1/q$ in \eqref{E: hayqueverqiebiensalestovayajuergamaravicasi} (which leads to $1<z\le \frac{N+1}{N+1-\theta}$) and
\begin{equation*}
\gamma_z(t)=e^{-\frac{N+1-\theta}{\left(z-1\right)\left(l_1-l\right)t}},\ t>0,
\end{equation*} recalling that
\begin{equation*}
l_m-l_{m+1}=z^{-m}\left(z-1\right)\left(l_1-l\right),\ \text{for}\ m\ge 1,
\end{equation*}
 we have
\begin{equation}\label{E: hayqueverqiebiensalestovayajuergamaravi22222}
\begin{split}
\gamma_z(z^{-m})\|e^{l_m\Delta}f\|_{L^2(\Omega)}- \gamma_z(z^{-m-1})\|e^{l_{m+1}\Delta}f\|_{L^2(\Omega)}\\
\le N\int_{l_{m+1}}^{l_{m}}\chi_E(s) \|e^{s\Delta}f\|_{L^1(\mathcal D_s)}\,ds,\;\;\mbox{when}\;\;m\ge 1.
\end{split}
\end{equation}
 Choose now
\begin{equation*}
z=\frac 12\left(1+\frac{N+1}{N+1-\theta}\right).
\end{equation*}
The choice of $z$ and Lemma \ref{L: 2teoriamedida} determines  $l_1$ in $(l,T)$ and from \eqref{E: hayqueverqiebiensalestovayajuergamaravi22222},
\begin{equation}\label{E: hayqueverqiebiensalestovayajuergamaravi222224}
\begin{split}
\gamma(z^{-m})\|e^{l_m\Delta}f\|_{L^2(\Omega)}- \gamma(z^{-m-1})\|e^{l_{m+1}\Delta}f\|_{L^2(\Omega)}\\
\le N\int_{l_{m+1}}^{l_{m}}\chi_E(s) \|e^{s\Delta}f\|_{L^1(\mathcal D_s)}\,ds,\;\;\mbox{when}\;\;m\ge 1.
\end{split}
\end{equation}
 with
\begin{equation*}\label{E: otraformulaincreiblequealucianal}
\gamma(t)=e^{-A/t}\ \text{and}\ A=A(\Omega,R, E, |\mathcal D|/\left(T|B_R| \right))=\frac{2\left(N+1-\theta\right)^2}{\theta\left(l_1-l\right)}\, .
\end{equation*}
Finally, because of
\begin{equation*}
\|e^{T\Delta}f\|_{L^2(\Omega)}\le \|e^{l_1\Delta}f\|_{L^2(\Omega)},\  \sup_{t\ge 0}\|e^{t\Delta}f\|_{L^2(\Omega)}<+\infty,\ \lim_{t\to 0+}\gamma(t)=0,
\end{equation*}
and \eqref{E: desigualdadgeometrica}, the addition of the telescoping series in  \eqref{E: hayqueverqiebiensalestovayajuergamaravi222224} gives
\begin{equation*}
\|e^{T\Delta}f\|_{L^2(\Omega)}\le Ne^{zA}\int_{\mathcal D\cap(\Omega\times [l,l_1])}|e^{t\Delta}f(x)|\,dxdt,\;\;\mbox{for}\;\; f\in L^2(\Omega),
\end{equation*}
which proves \eqref{E: observabilidadparabolica} with $B=zA+\log N$.

\end{proof}

\begin{remark}\label{R: 3}
{\it The constant $B$ in Theorem \ref{T:5carcajada} depends on $E$ because the choice of $l_1=l_1(z,E)$ in Lemma \ref{L: 2teoriamedida} depends on the possible complex structure of the measurable set $E$ (See the proof of Lemma  \ref{L: 2teoriamedida} in \cite[Proposition 2.1]{PhungWang1}).  When $\mathcal D=\omega\times (0,T)$, one may take $l=T/2$, $l_1=T$, $z=2$ and then,
\begin{equation*}
B=A(\Omega, R, |\omega|/|B_R|)/T.
\end{equation*}}
\end{remark}

\begin{remark}\label{R: unaremarjakoquejode}
{\it The proof of Theorem \ref{T:5carcajada} also implies the following observability estimate:
\begin{equation*}\label{E: otraacotacionhhh}
\sup_{m\ge 0}\sup_{l_{m+1}\le t\le l_m}e^{-z^{m+1}A}\|e^{t\Delta}f\|_{L^2(\Omega)}\le N\int_{\mathcal D\cap (\Omega\times [l,l_1])}|e^{t\Delta}f(x)|\,dxdt,
\end{equation*}
for $f$ in $L^2(\Omega)$, and with $z$, $N$ and $A$ as defined along the proof of Theorem \ref{T:5carcajada}. Here, $l_0=T$.}
\end{remark}

\section{Spectral inequalities}\label{S:6}

Throughout this section, $\Omega$ is a bounded domain in $\mathbb{R}^n$ and $\nu_q$ is the unit exterior normal vector associated to $q\in\p\om$.

\definition\label{D: Lipschitz domain}  $\om$
is a Lipschitz domain (sometimes called strongly Lipschitz or Lipschitz graph domains) with constants $m$ and $\varrho$ when for each point $p$ on the boundary of $\Omega$ there is a rectangular coordinate system $x=(x',x_n)$ and a Lipschitz function $\phi:\R^{n-1}\longrightarrow \R$ verifying
\begin{equation}\label{E: condicionLipschitz}
\phi(0')=0,\quad |\phi(x_1')-\phi(x_2')|\le m|x_1'-x_2'|,\ \text{for all}\ x_1', x_2'\in\R^{n-1},
\end{equation}
$p=(0',0)$ on this coordinate system  and
\begin{equation}\label{E: segunacondicionlipschitz}
\begin{split}
&Z_{m,\varrho}\cap\Omega=\{(x',x_n) : |x'|< \varrho,\ \phi(x')< x_n < 2m\varrho \},\\
&Z_{m,\varrho}\cap\partial\Omega= \{(x',\phi(x')) : |x'|< \varrho\},
\end{split}
\end{equation}
where $Z_{m,\varrho}=B'_	\varrho\times (-2m\varrho,2m\varrho)$.

\definition\label{D: C1} $\Omega$ is a $C^1$ domain when it is a Lipschitz domain and the functions $\varphi$ associated to points $p$ in $\partial\Omega$ satisfying \eqref{E: condicionLipschitz} and \eqref{E: segunacondicionlipschitz} are in $C^1(\R^{n-1})$. Under such condition, there is
\begin{equation*}
\theta: [0,+\infty)\longrightarrow [0,+\infty),\ \text{with}\ \theta\ \text{nondecreasing},\ \lim_{t\to 0^+}\theta(t)=0,
\end{equation*}
with
\begin{equation}\label{E: algoquesecumpleenc1}
|\left(q-p\right)\cdot \nu_q|\le |p-q|\theta(|p-q|),\ \text{when}\ p, q\in\partial\Omega.
\end{equation}

\definition\label{D: lowerC1} $\Omega$ is a lower $C^1$ domain when it is a Lipschitz domain and
\begin{equation}\label{E: unamalditacondici—nparapoligocosm}
\liminf_{q\in\partial\Omega,\, q\to p}\frac{\left(q-p\right)\cdot\nu_q}{|q-p|}\ge 0,\ \text{for each}\ p\in\partial\Omega.
\end{equation}

\begin{remark}\label{R: algobastanteimportantene} {\it  Lipschitz polygons in the plane, convex domains in $\Rn$ (See \cite[p. 72, Lemma 3.4.1]{Morrey} for a proof that convex domains in $\Rn$ are Lipschitz domains (or strongly Lipschitz domains,  to keep pace with Morrey's definition)) or Lipschitz polyhedron in $\Rn$, $n\ge 3$, are lower $C^1$ domains. In fact, in all these cases and for $p\in\partial\Omega$, there is $r_p>0$ such that
\begin{equation*}
\left(q-p\right)\cdot\nu_q\ge 0,\ \text{for a.e.}\ q\in B_{r_p}(p)\cap\partial\Omega.
\end{equation*}

When $\Omega$ is convex, $r_p=+\infty$. In general, a Lipschitz domain $\Omega$ is a lower $C^1$ domain when the Lipschitz functions $\phi$ describing its boundary can be discomposed as the sum of two Lipschitz functions, $\phi=\phi_1+\phi_2$, with $\phi_1$ convex over $\R^{n-1}$ and $\phi_2$ satisfying
\begin{equation*}
\lim_{\e\to 0^+}\sup_{\ \ \ |x_1'-x_2'|<\e}|\nabla\phi_2(x_1')-\nabla\phi_2(x_2')|=0.
\end{equation*}
In particular, $C^1$ domains are lower $C^1$ domains (See \eqref{E: algoquesecumpleenc1}).}
\end{remark}
\definition\label{D: contractable} A Lipschitz domain $\Omega$ in $\Rn$ is locally star-shaped when for each $p\in\partial\Omega$ there are $x_p$ in $\Omega$ and $r_p>0$ such that
\begin{equation}\label{E: definiciocontratcet}
|p-x_p|<r_p\ \text{and}\ B_{r_p}(x_p)\cap\Omega\ \text{is star-shaped with center}\ x_p.
\end{equation}
In particular,
\begin{equation*}
\left(q-x_p\right)\cdot\nu_q\ge0,\ \text{for a.e.}\  q \in B_{r_p}(x_p)\cap\partial\Omega.
\end{equation*}
\begin{remark}\label{R: algpautilizarsinduda}
{\it The compactness of $\partial\Omega$ shows that when $\Omega$ is locally star-shaped, there are a finite set $\mathcal A\subset \Omega$, $0<\e, \rho\le 1$ and a family of positive numbers $0<r_x\le 1$, $x\in\mathcal A$, such that
\begin{equation}\label{E:lasmaravillasqueocurren}
\begin{split}
\partial\Omega\subset &\cup_{x\in\mathcal A} B_{r_x}(x),\  B_{\left(1+\e\right)r_x}(x)\cap\Omega\ \text{is}\ \text{star-shaped},\\
&\mathcal A\subset\Omega^{4\rho}\ \text{and}\ \overline\Omega\setminus\Omega^{4\rho}\subset \cup_{x\in\mathcal A} B_{r_x}(x).
\end{split}
\end{equation}
Here,
\begin{equation*}
\Omega^\eta=\{x\in\Omega : d(x,\partial\Omega)>\eta\},\ \text{when}\ \eta>0.
\end{equation*}}
\end{remark}
\begin{remark}\label{R: algoasombroso}
{\it Theorem \ref{T: unapropiedadgemotetrica} below shows that Lispchitz polygons in the plane, $C^1$ domains, convex domains and Lipschitz polyhedron in $\R^n$, $n\ge 3$ are  locally star-shaped. Lipschitz domains in $\Rn$ with Lipschitz constant $m<\frac 12$ are also locally star-shaped. Recall that not all the polygons in the plane or polyhedra in $\R^3$ are Lipschitz domains (See \cite[p. 496, pp. 508-509]{VerchotaVogel} or the two-brick domain of \cite[p. 303]{KralWendland}).}
\end{remark}

\begin{theorem}\label{T: unapropiedadgemotetrica} Let $\Omega$ be a  lower $C^1$ domain. Then, $\Omega$ is locally star-shaped.
\end{theorem}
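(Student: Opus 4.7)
The plan is to fix $p\in\partial\Omega$ and build both $x_p$ and $r_p$ directly in the Lipschitz graph coordinate system at $p$ from Definition~\ref{D: Lipschitz domain}. I translate so that $p=(0',0)$ and write $\partial\Omega\cap Z_{m,\varrho}$ as the graph of a Lipschitz $\phi:B'_\varrho\to\R$ with $\phi(0')=0$ and Lipschitz constant $m$, with $\Omega$ lying above the graph inside $Z_{m,\varrho}$. At any $x'$ where $\phi$ is differentiable the outward unit normal at $q=(x',\phi(x'))$ is $\nu_q=(\nabla\phi(x'),-1)/\sqrt{1+|\nabla\phi(x')|^2}$, and a direct computation gives
\[
(q-p)\cdot\nu_q=\frac{x'\cdot\nabla\phi(x')-\phi(x')}{\sqrt{1+|\nabla\phi(x')|^2}},\qquad |q-p|\le|x'|\sqrt{1+m^2}.
\]

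Next I convert the lower $C^1$ hypothesis into a quantitative statement. Rewriting the $\liminf$ condition in $(\e,\delta)$-form and restricting to the full-measure set of $q$ at which $\nu_q$ exists (via Rademacher's theorem applied to $\phi$), I obtain that for each $\e>0$ there is $\delta>0$ such that
\[
x'\cdot\nabla\phi(x')-\phi(x')\ge-\e(1+m^2)|x'|\quad\text{for almost every $x'\in B'_\delta$.}
\]
I then fix $\e=1/[4(1+m^2)]$ with its corresponding $\delta$, choose $r\in(0,\min\{\delta,\varrho,4m\varrho/3\}]$, and set $h=r/2$, $x_p=(0',h)$. This ensures $x_p\in\Omega$, $|p-x_p|=h<r$, and $B_r(x_p)\subset Z_{m,\varrho}$, so that $B_r(x_p)\cap\partial\Omega$ sits inside the graph of $\phi$.

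The central estimate is the radial positivity $(q-x_p)\cdot\nu_q\ge 0$ at almost every point of $\partial[B_r(x_p)\cap\Omega]$. On the spherical piece $\partial B_r(x_p)\cap\overline\Omega$ the outward normal is $(q-x_p)/r$, so the quantity equals $r>0$. On the graph piece $B_r(x_p)\cap\partial\Omega$, using $p-x_p=(0',-h)$, the splitting $(q-x_p)\cdot\nu_q=(q-p)\cdot\nu_q+(p-x_p)\cdot\nu_q$ combined with the first-paragraph computation gives
\[
[(q-x_p)\cdot\nu_q]\sqrt{1+|\nabla\phi(x')|^2}=x'\cdot\nabla\phi(x')-\phi(x')+h\ge h-\e(1+m^2)|x'|>\tfrac{r}{4},
\]
where I used $|x'|<r$ together with the estimate of the previous paragraph.

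To conclude that $B_r(x_p)\cap\Omega$ is star-shaped with center $x_p$ I would invoke the standard characterization that a bounded Lipschitz domain $D$ is star-shaped with respect to $x_0\in D$ if and only if $(q-x_0)\cdot\nu_q\ge 0$ almost everywhere on $\partial D$. For a self-contained alternative I would fix $y=(y',y_n)\in B_r(x_p)\cap\Omega$ and show that $F(t):=h+t(y_n-h)-\phi(ty')$ stays positive on $[0,1]$ by verifying that the Lipschitz function $t\mapsto F(t)/t$ is non-increasing on $(0,1]$; differentiation yields $(F(t)/t)'=-[t\tfrac{d}{dt}\phi(ty')-\phi(ty')+h]/t^2$, and the radial positivity forces the bracket to be $\ge 0$. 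The main obstacle lies here, since the lower $C^1$ bound is an almost-everywhere statement on $\R^{n-1}$, while the self-contained route requires the chain-rule identity $\tfrac{d}{dt}\phi(ty')=y'\cdot\nabla\phi(ty')$ along the one-dimensional ray $\{ty':t\in[0,1]\}$, which can fail on a set of positive $1$D measure for non-generic $y'$. Invoking the standard star-shaped characterization bypasses this subtlety; otherwise one handles it by a Fubini argument covering a full-measure set of directions $y'$ and an approximation step for the remaining directions.
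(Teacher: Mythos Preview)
Your proof is correct and follows essentially the same approach as the paper: place $x_p$ a small height above $p$ in the graph direction, and use the lower $C^1$ hypothesis to verify the radial normal condition $(q-x_p)\cdot\nu_q\ge 0$ on $B_{r_p}(x_p)\cap\partial\Omega$. The paper carries out the same splitting $(q-x_p)\cdot\nu_q=(q-p)\cdot\nu_q-\delta\,e\cdot\nu_q$ with slightly different constants and, like you, tacitly invokes the characterization of star-shapedness via the sign of $(q-x_0)\cdot\nu_q$; your discussion of that last step is in fact more careful than the paper's.
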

\begin{proof} Let $(x',x_n)$ and $\phi$ be accordingly the rectangular coordinate system and Lipschitz function associated to $p\in\partial\Omega$, satisfying  \eqref{E: condicionLipschitz} and \eqref{E: segunacondicionlipschitz}. Let then, $x_p=p+\delta\, e$, $e=(0',1)$, and $r_p=2\delta$,
where $\delta>0$ will be chosen later.
Clearly $x_p$ is in $\Omega$, $|p-x_p|< r_p$ and $B_{r_p}(x_p)\subset Z_{m,\varrho}$, when $0<\delta<\min{\{\tfrac \varrho2, \tfrac{2m\varrho}3\}}$. Moreover, for almost every $q$ in $B_{r_p}(x_p)\cap\partial\Omega$, it holds that $q=(x',\phi(x'))$ for some $x'$ in $B'_\rho$,
\begin{equation*}
\nu_q=\frac{\left(\nabla\phi(x'),-1\right)}{\sqrt{1+|\nabla\phi(x')|^2}}\quad \text{and}\quad |q-p|\le r_p+\delta=3\delta
\end{equation*}
and
\begin{equation*}
\begin{split}
\left(q-x_p\right)\cdot\nu_q&=\left(q-p\right)\cdot\nu_q-\delta\, e\cdot\nu_q\\
&\ge \left(q-p\right)\cdot\nu_q+\frac{\delta}{\sqrt{1+m^2}}.
\end{split}
\end{equation*}
From \eqref{E: unamalditacondici—nparapoligocosm} there is $s_p>0$ such that
\begin{equation}\label{E: consecunciadeexistenciade}
\left(q-p\right)\cdot\nu_q\ge -\frac{|q-p|}{3\sqrt{1+m^2}},\ \text{when}\ q\in\partial\Omega\ \text{and}\   |q-p|<s_p.
\end{equation}
Thus, \eqref{E: definiciocontratcet} holds for the choices we made of $x_p$, $r_p$ and $s_p$, provided that $\delta$ is chosen with
\begin{equation*}
0<\delta<\min{\{\tfrac \varrho2, \tfrac{2m\varrho}3, \tfrac{s_p}3\}}.
\end{equation*}
\end{proof}

The proof of Theorem \ref{T: elteoremauquenuncaquiseescribir} will follow from the following Lemmas \ref{L: convexidadlogaritmica} and \ref{L: interpolacionenlafrontera}.

\begin{lemma}\label{L: convexidadlogaritmica} Let $\Omega$  be a Lipschitz domain in $\Rn$, $R>0$ and assume that $B_R(x_0)\cap\Omega$ is star-shaped with center at some $x_0\in\overline\Omega$. Then,
 \begin{equation}\label{E: laconvexidadnecesaria}
 \|u\|_{L^2(B_{r_2}(x_0)\cap\Omega)}\le \|u\|_{L^2(B_{r_1}(x_0)\cap\Omega)}^{\theta}\|u\|_{L^2(B_{r_3}(x_0)\cap\Omega)}^{1-\theta},\ \text{with}\ \theta=\frac{\log{\frac{r_3}{r_2}}}{\log{\frac{r_3}{r_1}}}\ ,
\end{equation}
when $\Delta u=0$ in $B_R(x_0)\cap\Omega$, $u=0$ on $\triangle_R(x_0)$ and $0<r_1<r_2<r_3\le R$.
\end{lemma}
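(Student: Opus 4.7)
The plan is to prove \eqref{E: laconvexidadnecesaria} by showing that
$$\Phi(r)=\int_{B_r(x_0)\cap\Omega}u^2\,dx$$
is log-convex as a function of $\log r$ on $(0,R]$, via an Almgren-type frequency function argument in which the star-shapedness of $B_R(x_0)\cap\Omega$ about $x_0$ produces a favorable sign on the boundary portion contained in $\partial\Omega$. Alongside $\Phi$, I would introduce
$$H(r)=\int_{\partial B_r(x_0)\cap\Omega}u^2\,d\sigma,\quad D(r)=\int_{B_r(x_0)\cap\Omega}|\nabla u|^2\,dx,\quad N(r)=\frac{rD(r)}{H(r)}.$$
Because $\Delta u=0$ in $B_R(x_0)\cap\Omega$ and $u$ vanishes on $\triangle_R(x_0)$, the coarea formula and integration by parts yield $\Phi'(r)=H(r)$, $D(r)=\int_{\partial B_r(x_0)\cap\Omega}u\,u_\nu\,d\sigma$, and $H'(r)=\tfrac{n-1}{r}H(r)+2D(r)$, so that $(\log H)'=(n-1+2N(r))/r$.

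The central step is the monotonicity $N'(r)\ge 0$. I would apply the Rellich--Poho\v{z}aev identity in $B_r(x_0)\cap\Omega$ with the radial field $X(x)=x-x_0$: since $\Delta u=0$,
$$\operatorname{div}\bigl(|\nabla u|^2X-2(X\cdot\nabla u)\nabla u\bigr)=(n-2)|\nabla u|^2.$$
Integrating and splitting the boundary into the spherical cap $\partial B_r(x_0)\cap\Omega$ (where $X=r\nu$) and the flat piece $B_r(x_0)\cap\partial\Omega$ (where $u=0$ forces $\nabla u=u_\nu\,\nu$) produces
$$D'(r)=\frac{n-2}{r}D(r)+2\int_{\partial B_r(x_0)\cap\Omega}u_\nu^2\,d\sigma+\frac{1}{r}\int_{B_r(x_0)\cap\partial\Omega}u_\nu^2\,(x-x_0)\cdot\nu\,d\sigma.$$
Inserting this into $N'(r)/N(r)=1/r+D'(r)/D(r)-H'(r)/H(r)$ and simplifying gives
$$N'(r)=\frac{2r}{H(r)^2}\left[H(r)\int_{\partial B_r(x_0)\cap\Omega}u_\nu^2\,d\sigma-D(r)^2\right]+\frac{1}{H(r)}\int_{B_r(x_0)\cap\partial\Omega}u_\nu^2\,(x-x_0)\cdot\nu\,d\sigma.$$
The first bracket is $\ge 0$ by Cauchy--Schwarz applied to $D(r)=\int u\,u_\nu\,d\sigma$, while the star-shapedness of $B_R(x_0)\cap\Omega$ with center $x_0$ forces $(x-x_0)\cdot\nu\ge 0$ for a.e. $x\in B_R(x_0)\cap\partial\Omega$, so both terms are nonnegative and $N$ is non-decreasing.

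From the monotonicity of $N$ and $(\log H)'=(n-1+2N(r))/r$ one gets $H(s)\ge H(r)(s/r)^{n-1+2N(r)}$ for $0<s\le r\le R$, which integrated in $s$ on $(0,r)$ gives $\Phi(r)\ge rH(r)/(n+2N(r))$. Setting $\mathcal F(r)=r\Phi'(r)/\Phi(r)=rH(r)/\Phi(r)$, the identity $H+rH'=(n+2N(r))H$ turns the computation of $\mathcal F'$ into
$$\mathcal F'(r)\ge 0\iff (n+2N(r))\Phi(r)\ge rH(r),$$
which is exactly the inequality just derived. Hence $\mathcal F$ is non-decreasing, equivalently $\log\Phi$ is convex in $\log r$, and so $\Phi(r_2)\le\Phi(r_1)^\theta\Phi(r_3)^{1-\theta}$ with $\theta=\log(r_3/r_2)/\log(r_3/r_1)$; taking square roots yields \eqref{E: laconvexidadnecesaria}.

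The main obstacle is the rigorous justification of the Rellich--Poho\v{z}aev identity on the merely Lipschitz set $B_r(x_0)\cap\Omega$, since $\nu$ exists only a.e. on $\partial\Omega$ and $u$ has limited regularity up to $\triangle_R(x_0)$. I would handle this by approximating $\Omega$ from inside by smooth star-shaped subdomains (obtained by mollifying the Lipschitz graphs describing $\partial\Omega$ while preserving star-shapedness with respect to $x_0$), applying the identity on the smooth approximations where all integrations by parts are classical, and passing to the limit using the non-tangential and square-function estimates for harmonic functions with zero Dirichlet data available from the theory of the Dirichlet problem on Lipschitz domains; these guarantee that $u_\nu\in L^2_{\mathrm{loc}}(\triangle_R(x_0))$ and that all boundary integrals above converge to the expected limits for a.e. $r\in(0,R)$, which is enough to conclude.
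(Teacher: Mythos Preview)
Your argument is correct and is precisely the Almgren--type frequency function proof that the paper invokes by citing \cite[Lemma 3.1]{KukavicaNystrom} rather than writing out; in particular, your identification of the boundary term $\frac{1}{H(r)}\int_{B_r(x_0)\cap\partial\Omega}u_\nu^2\,(x-x_0)\cdot\nu\,d\sigma$ and its nonnegativity via star-shapedness is exactly the mechanism the paper highlights in Remark~\ref{R: queeslorelevantedeserstar}. The approximation scheme you outline for justifying the Rellich--Poho\v{z}aev identity on the Lipschitz piece is the standard one and suffices.
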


\begin{remark}\label{R: queeslorelevantedeserstar} {\it See \cite[Lemma 3.1]{KukavicaNystrom}  for a proof of Lemma \ref{L: convexidadlogaritmica} and \cite{adolfssonEscauriazakenig} and \cite{adolfssonEscauriaza} for related results but with spheres replacing balls. The relevance of the assumption that $B_R(x_0)\cap\Omega$ is star-shaped in the proof of \cite[Lemma 3.1]{KukavicaNystrom} is that
\begin{equation*}
 \left(q-x_0\right)\cdot\nu_q\ge 0,\ \text{for a.e.}\  q\ \text{in}\  B_R(x_0)\cap\partial\Omega
 \end{equation*}
 and certain terms arising in the arguments can be drop because of their nonnegative sign. In fact, \eqref{E: laconvexidadnecesaria} is the logarithmic convexity of the $L^2$-norm of $u$ over $B_r(x_0)\cap\Omega$ for $0<r\le R$ with respect to the variable $\log r$. Lemma \ref{L: convexidadlogaritmica}  extends up to the boundary the classical interior three-spheres inequality for harmonic functions, first stablished for complex analytic functions by Hadamard \cite{Hadamard1} and extended for harmonic functions by several authors \cite{GarofaloLin}:
\begin{equation}\label{E: laconvexidadnecesariainterior}
 \|u\|_{L^2(B_{r_2}(x_0))}\le \|u\|_{L^2(B_{r_1}(x_0))}^{\theta}\|u\|_{L^2(B_{r_3}(x_0))}^{1-\theta},\ \text{with}\ \theta=\frac{\log{\frac{r_3}{r_2}}}{\log{\frac{r_3}{r_1}}}\ ,
\end{equation}
\emph{when $\Delta u=0$ in $\Omega$, $B_R(x_0)\subset\Omega$ and $0<r_1<r_2<r_3\le R$.}
\vspace{0.3cm}

When $B_R(x_0)\subset\Omega$, $B_R(x_0)\cap\Omega$ is star-shaped, and a proof for \eqref{E: laconvexidadnecesariainterior} is within the one for  \cite[Lemma 3.1]{KukavicaNystrom}.}
\end{remark}

\begin{lemma}\label{L: interpolacionenlafrontera}
Let $\Omega$ be a  Lipischitz domain with constants $m$ and $\varrho$, $0<r<\varrho/10$ and $q$ be in $\partial\Omega$. Then, there are $N=N(n,m)$ and $\theta=\theta(n,m)$, $0<\theta<1$, such that the inequality
\begin{equation*}
\|u\|_{L^2(B_r(q)\cap\Omega)}\le Nr^{\frac{3\theta}2}\|\tfrac{\partial u}{\partial\nu}\|_{L^2(\Delta_{6r}(q))}^\theta\|u\|_{L^2(B_{8r}(q)\cap\Omega)}^{1-\theta},
\end{equation*}
holds for all harmonic functions $u$ in $B_{8r}(q)\cap\Omega$ satisfying $u=0$ on $B_{8r}(q)\cap\partial\Omega$.
\end{lemma}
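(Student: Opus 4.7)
\emph{Proof outline.} The strategy is to rescale to unit size, extend $u$ by zero across $\partial\Omega$, and split the extension into a harmonic remainder---to which the interior three-balls inequality \eqref{E: laconvexidadnecesariainterior} can be applied---plus a low-regularity piece controlled by $\partial u/\partial\nu$ on $\triangle_{6r}(q)$.

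First I would rescale: $\tilde u(y)=u(q+ry)$ is harmonic in $B_8(0)\cap\tilde\Omega$ and vanishes on $\partial\tilde\Omega\cap B_8(0)$, where $\tilde\Omega=(\Omega-q)/r$ is Lipschitz with constants $m$ and $\varrho/r>10$. The identities $\|u\|_{L^2(B_{\rho r}(q)\cap\Omega)}=r^{n/2}\|\tilde u\|_{L^2(B_\rho\cap\tilde\Omega)}$ and $\|\partial u/\partial\nu\|_{L^2(\triangle_{\rho r}(q))}=r^{(n-3)/2}\|\partial\tilde u/\partial\tilde\nu\|_{L^2(\tilde\triangle_\rho)}$ account exactly for the factor $r^{3\theta/2}$ and reduce matters to $r=1$. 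At unit scale I would define $\tilde u:=u\chi_{\Omega}$ on $B_8(q)$; since $u=0$ continuously on $\partial\Omega\cap B_8(q)$, $\tilde u\in H^1(B_8(q))$ and the divergence theorem gives
\begin{equation*}
\Delta\tilde u=-\tfrac{\partial u}{\partial\nu}\,d\sigma\rst{\partial\Omega\cap B_8(q)}
\end{equation*}
in $\mathcal{D}'(B_8(q))$. Letting $w\in H^1_0(B_8(q))$ solve $\Delta w=-\tfrac{\partial u}{\partial\nu}\,d\sigma\rst{\triangle_6(q)}$, the energy identity combined with the Lipschitz trace theorem and the Poincar\'e inequality yields
\begin{equation*}
\|w\|_{L^2(B_8(q))}\le N(n,m)\,\|\tfrac{\partial u}{\partial\nu}\|_{L^2(\triangle_6(q))},
\end{equation*}
and by construction $h:=\tilde u-w$ is harmonic in $B_6(q)$ (its distributional Laplacian is supported in $\partial\Omega\setminus\overline{\triangle_6(q)}$, hence outside $\overline{B_6(q)}$).

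Next I would exploit the exterior cone at $q$ granted by the Lipschitz hypothesis: in the local coordinates of Definition~\ref{D: Lipschitz domain} where $\Omega\cap Z_{m,\varrho}=\{x_n>\phi(x')\}$ with $\phi(0)=0$, the point $y_0:=(0',-\alpha)$ satisfies $B_c(y_0)\subset B_1(q)\setminus\overline\Omega$ for explicit constants $\alpha=\alpha(m)$, $c=c(m)\in(0,1)$. Since $\tilde u\equiv0$ on $B_c(y_0)$, there $h=-w$, and hence $\|h\|_{L^2(B_c(y_0))}\le N\|\partial u/\partial\nu\|_{L^2(\triangle_6(q))}$. Fixing radii $c<\rho_2<\rho_3$ so that $B_1(q)\subset B_{\rho_2}(y_0)$ and $B_{\rho_3}(y_0)\subset B_6(q)$---for instance $\rho_2=2$, $\rho_3=5$---the three-balls inequality \eqref{E: laconvexidadnecesariainterior} applied to the harmonic function $h$ centred at $y_0$ gives
\begin{equation*}
\|h\|_{L^2(B_{\rho_2}(y_0))}\le\|h\|_{L^2(B_c(y_0))}^{\theta}\|h\|_{L^2(B_{\rho_3}(y_0))}^{1-\theta},\q \theta=\theta(n,m)\in(0,1).
\end{equation*}

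To conclude, combining $\|u\|_{L^2(B_1(q)\cap\Omega)}\le\|h\|_{L^2(B_{\rho_2}(y_0))}+\|w\|_{L^2(B_8(q))}$ with the crude bound $\|h\|_{L^2(B_{\rho_3}(y_0))}\le\|u\|_{L^2(B_8(q)\cap\Omega)}+\|w\|_{L^2(B_8(q))}$ yields
\begin{equation*}
\|u\|_{L^2(B_1(q)\cap\Omega)}\le N\|\tfrac{\partial u}{\partial\nu}\|_{L^2(\triangle_6)}^{\theta}\bigl(\|u\|_{L^2(B_8(q)\cap\Omega)}+\|\tfrac{\partial u}{\partial\nu}\|_{L^2(\triangle_6)}\bigr)^{1-\theta}+N\|\tfrac{\partial u}{\partial\nu}\|_{L^2(\triangle_6)},
\end{equation*}
and a simple case split, according as $\|u\|_{L^2(B_8\cap\Omega)}$ dominates $\|\partial u/\partial\nu\|_{L^2(\triangle_6)}$ or vice versa, absorbs the additive term and produces the stated interpolation inequality at unit scale; undoing the rescaling restores $r^{3\theta/2}$. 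The step I expect to need most care is the energy bound for $w$, whose source is concentrated on a Lipschitz surface: it relies on the $L^2$--trace theorem on Lipschitz domains with constant depending only on $m$ at unit scale, and this is the precise point where the hypothesis \eqref{E: segunacondicionlipschitz} enters.
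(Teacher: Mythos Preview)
Your argument is correct and takes a genuinely different route from the paper's. The paper proves Lemma~\ref{L: interpolacionenlafrontera} via the Carleman inequality of Lemma~\ref{L: carkemanb�sico}: after rescaling to $r=1$ and translating the weight singularity to $-e=(0',-1)\notin\overline\Omega$, it applies the weighted estimate to $u\psi$ for a cutoff $\psi$, obtaining a one-parameter family of bounds $\|u\|_{L^2(B_1\cap\Omega)}\le N\big[(2/3)^\tau\|u\|_{L^2(B_6\cap\Omega)}+(2/\beta)^\tau\|\partial_\nu u\|_{L^2(\triangle_6)}\big]$ and then optimising in~$\tau$. Your approach replaces the Carleman machinery by softer potential theory: you extend $u$ by zero, peel off the single-layer contribution on $\triangle_6$ by solving a Dirichlet problem in a ball, and feed the resulting harmonic remainder into the classical three-balls inequality~\eqref{E: laconvexidadnecesariainterior} centred at a point in the exterior cone. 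The trade-offs are clear: the Carleman route is more self-contained (no trace theorem, no auxiliary Dirichlet problem) and generalises readily to variable-coefficient operators, while your route is more elementary in that it reduces everything to the interior Hadamard inequality and the $H^1\!\to\! L^2$ trace on a Lipschitz graph. One minor technical point: to have $\tilde u\in H^1(B_8)$ and justify the distributional identity $\Delta\tilde u=-(\partial_\nu u)\,d\sigma$, you implicitly need $u\in H^1$ up to $\partial\Omega$ in $B_8$; this is not given directly but follows from Caccioppoli's inequality on a slightly smaller ball, so in practice you should run the construction in $B_{7}$ (say) rather than $B_8$, which changes nothing in the radii $c<2<5$ you chose.
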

To prove Lemma \ref{L: interpolacionenlafrontera} we use the Carleman inequality in Lemma \ref{L: carkemanb‡sico}. As far as the authors know, the first $L^2$-type Carleman inequality with a radial weight and whose proof was worked out in Cartesian coordinates appeared first in \cite[Lemma 1]{kenigwang}. Lemma \ref{L: carkemanb‡sico} borrows ideas from \cite[Lemma 1]{kenigwang} but the proof here is somehow simpler.  In \cite[p. 518]{adolfssonEscauriazakenig} also appears an interpolation inequality similar to the one in Lemma \ref{L: interpolacionenlafrontera} but with the $L^2$-norms replaced by $L^1$-norms. The inequality in \cite[p. 518]{adolfssonEscauriazakenig} holds though its proof in \cite{adolfssonEscauriazakenig} is not correct. It follows from Lemma \ref{L: interpolacionenlafrontera} and properties of harmonic functions.
\begin{lemma}\label{L: carkemanb‡sico} Let $\Omega$ be a Lipschitz domain in $\Rn$ with $\Omega\subset B_R$ and $\tau>0$. Assume that $0$ is not in $\overline\Omega$. Then,
\begin{equation*}
\int_{\Omega}|x|^{-2\tau}u^2\,dx\le \frac{R^2}{4\tau^2}\int_{\Omega}|x|^{-2\tau+2}\left(\Delta u\right)^2\,dx
-\frac{R^2}{2\tau}\int_{\partial\Omega}q\cdot\nu\,|q|^{-2\tau}\left(\tfrac{\partial u}{\partial\nu}\right)^2\,d\s,
\end{equation*}
for all $u$ in $C^2(\overline \Omega)$ satisfying $u=0$ on $\partial\Omega$.
\end{lemma}
\begin{proof}[Proof of Lemma \ref{L: carkemanb‡sico}]
Let $u$ be in $C^2(\overline \Omega)$, $u=0$ on $\partial\Omega$ and define $f=|x|^{-\tau}u$. Then,
\begin{equation}\label{E:70}
|x|^{1-\tau } \Delta u =|x|\left(\Delta f+\tfrac{\tau ^{2}}{|x|^{2}}\,f\right)+ \tfrac{2\tau}{|x|}\,\left(x\cdot \nabla f+\tfrac{n-2}{2}f\right).
\end{equation}
Square both sides of \eqref{E:70} to get
\begin{multline}\label{E:800}
|x|^{2-2\tau}\left(
\Delta u \right)^{2}= 4\tau^2|x|^{-2}\,\left(x\cdot \nabla f+\tfrac{n-2}{2}f\right)^{2}+|x|^2\left(\triangle f+\tfrac{\tau^2}{|x|^{2}}\,f\right)^2\\+4\tau\left(x\cdot \nabla f+\tfrac{n-2}{2}f\right)\left(\Delta
f+\tfrac{\tau^{2}}{|x|^{2}}f\right).
\end{multline}
Observe that
\begin{equation*}
\int_{\Omega} |x|^{-2}\left(x\cdot \nabla f+\tfrac{n-2}{2}f\right)f\,dx=\tfrac 12\int_{\partial\Omega}\tfrac{q\cdot \nu}{|q|^2}f^2\,d\s=0,
\end{equation*}
and integrate \eqref{E:800} over $\Omega$. Then we get the identity
\begin{multline}\label{E:80}
\int_{\Omega}|x|^{2-2\tau}\left(
\Delta u\right)^{2}\,dx=
4\tau^2\int_{\Omega} |x|^{-2}\left(x\cdot \nabla f+\tfrac{n-2}{2}f\right)^{2}\,dx\\+\int_{\Omega} |x|^{2}\left(\triangle f+\tfrac{\tau^2}{|x|^{2}}\,f\right)^2\,dx
+4\tau\int_{\Omega}\left(x\cdot \nabla f+\tfrac{n-2}{2}f\right)\Delta f\,dx.
\end{multline}
The Rellich-Ne\v cas or Pohozaev identity
\begin{equation*}
 \nabla\cdot\left[x |\nabla f|^{2}-2\left(x\cdot
\nabla f\right)\nabla f\right]+2\left(x \cdot \nabla f\right)\Delta f=\left(n-2\right)|\nabla f|
^{2}\
\end{equation*}
and the identity
\begin{equation*}
\Delta \left( f^{2}\right) =2f\Delta f+2\left| \nabla f\right| ^{2},
\end{equation*}
give the formula
\begin{equation*}
 \nabla\cdot\left[x|\nabla f|^2-2\left(x\cdot\nabla
f\right)\nabla f\right]+2\left(x\cdot \nabla f+\tfrac{n-2}{2}f\right)\Delta f=
\tfrac{n-2}{2}\Delta(f^2).
\end{equation*}
The integration of
this identity over $\Omega$ implies the formula
\begin{equation}\label{E:130}
4\int_{\Omega}\left(x\cdot \nabla f+\tfrac{n-2}{2}f\right)\Delta f\,dx=
2\int_{\partial\Omega}q\cdot\nu\,\left(\tfrac{\partial f}{\partial\nu}\right)^2d\s,
\end{equation}
and plugging
\eqref{E:130} into
\eqref{E:80} gives the identity
\begin{multline}\label{E: joquebienfunciona}
\int_{\Omega}|x|^{2-2\tau}\left(
\Delta u\right)^{2}\,dx=
4\tau^2\int_{\Omega} |x|^{-2}\left(x\cdot \nabla f+\tfrac{n-2}{2}f\right)^{2}\,dx\\+\int_{\Omega} |x|^{2}\left(\triangle f+\tfrac{\tau^2}{|x|^{2}}\,f\right)^2\,dx
+2\tau\int_{\partial\Omega}q\cdot\nu\,\left(\tfrac{\partial f}{\partial\nu}\right)^2d\s.
\end{multline}
Next, the identity
\begin{equation*}
-\int_{\Omega} \left(x\cdot \nabla f+\tfrac{n-2}{2}f\right)f\,dx=\int_{\Omega}f^2\,dx,
\end{equation*}
the Cauchy-Schwarz's inequality and $\Omega\subset B_R$ show that
\begin{equation}\label{E: vayacasualidadquetirfunciona}
R^{-2}\int_{\Omega} f^2\,dx\le \int_{\Omega}|x|^{-2}\left(x\cdot \nabla f+\tfrac{n-2}{2}f\right)^2\,dx.
\end{equation}
Then, Lemma \ref{L: carkemanb‡sico} follows from \eqref{E: joquebienfunciona} and \eqref{E: vayacasualidadquetirfunciona}.
\end{proof}

\vskip 10 pt
\begin{proof}[Proof of Lemma \ref{L: interpolacionenlafrontera}] After rescaling and translation we may assume that $q=0$, $r=1$ and that there is a Lipschitz function $\phi:\R^{n-1}\longrightarrow \R$ verifying
 \begin{equation}\label{E: segunacondicionlipschitzescalada}
\begin{split}
&\phi(0')=0,\quad |\phi(x_1')-\phi(x_2')|\le m|x_1'-x_2'|,\ \text{for all}\ x_1', x_2'\in\R^{n-1},\\
&Z_{m,10}\cap\Omega=\{(x',x_n) : |x'|< 10,\ \phi(x')< x_n < 20m \},\\
&Z_{m,10}\cap\partial\Omega= \{(x',\phi(x')) : |x'|< 10\},
\end{split}
\end{equation}
where $Z_{m,10}=B'_{10}\times (-20m,20m)$. Recalling that $e=(0',1)$. Thus, $-e\notin\overline\Omega$
and
\begin{equation}\label{E: unacadenadecontenidos}
B_1\subset B_2(-e)\subset B_5(-e)\subset B_6.
\end{equation}
Choose $\psi$ in $C_0^\infty(B_5(-e))$ with $\psi\equiv 1$ in $B_3(-e)$ and $\psi\equiv0$ outside $B_4(-e)$. From \eqref{E: segunacondicionlipschitzescalada}, there is $\beta\in(0,1)$, $\beta=\beta(m)$,
such that
\begin{equation}\label{E: loLipschitzianocosigiuemaravi}
B_{2\beta}(-e)\cap \Omega=\emptyset.
\end{equation}
By translating the
inequality in Lemma \ref{L: carkemanb‡sico} from $0$ to $-e$  we find that
\begin{equation}\label{E: ellLemma8 deformamasclasica}
\begin{split}
&\left\| |x+e|^{-\tau}f\right\|_{L^2(B_5(-e)\cap\Omega)}\le \left\||x+e|^{-\tau+1}\Delta f\right\|_{L^2(B_5(-e)\cap\Omega)}\\
&+\left\||q+e|^{\frac 12-\tau}\tfrac{\partial f}{\partial\nu}\right\|_{L^2(B_5(-e)\cap\partial\Omega)},
\end{split}
\end{equation}
when $f$ is  in $C^2_0(B_5(-e)\cap\overline \Omega)$, $f=0$ on $B_5(-e)\cap\partial\Omega$ and $\tau\ge 20$.
Let then $u$ be harmonic in $B_8\cap \Omega$ with
$u=0$ on $B_8\cap\partial\Omega$ and
take $f=u\psi$ in \eqref{E: ellLemma8 deformamasclasica}. We get
\begin{equation}\label{E: hayqueverquededesigualda}
\begin{split}
&\left\| |x+e|^{-\tau}u\psi\right\|_{L^2(B_5(-e)\cap\Omega)}
\le \left\| |x+e|^{1-\tau}\left(u\Delta\psi+2\nabla\psi\cdot\nabla u\right) \right\|_{L^2(B_5(-e)\cap\Omega)}\\
&+\left\||q+e|^{\frac 12-\tau}\psi
\tfrac{\p u}{\p \nu}\right\|_{L^2(B_5(-e)\cap\partial\Omega)},\;\;\mbox{when}\;\;\tau\ge 20.
\end{split}
\end{equation}
 Next, because $\psi\equiv1$ on $B_3(-e)$ and \eqref{E: unacadenadecontenidos}, we have
\begin{equation}\label{E: laprimera}
2^{-\tau}\|u\|_{L^2(B_1\cap \Omega)}\le\left\| |x+e|^{-\tau}f\right\|_{L^2(B_5(-e)\cap\Omega)},\ \text{when}\ \tau\ge 1.
\end{equation}
Also, it follows from from \eqref{E: loLipschitzianocosigiuemaravi} that
\begin{equation}\label{E; lasegundaquenece}
\left\||q+e|^{\frac 12-\tau}\psi
\tfrac{\p u}{\p \nu}\right\|_{L^2(B_5(-e)\cap\partial\Omega)}\le \beta^{-\tau}\|\tfrac{\p u}{\p \nu}\|_{L^2(\Delta_6)},\ \text{when}\ \tau\ge 1\ .
\end{equation}
Now, $|\Delta\psi|+|\nabla\psi|$ is supported in $B_4(-e)\setminus B_3(-e)$, where $|x+e|\ge 3$ and
\begin{equation}\label{E: laterceradsigual}
\begin{split}
&\left\||x+e|^{1-\tau}\left(u\Delta\psi+2\nabla\psi\cdot\nabla u\right) \right\|_{L^2(B_5(-e)\cap\Omega)}\\
&\le N3^{-\tau}\big\||u|+|\nabla u|\big\|_{L^2(B_4(-e)\setminus B_3(-e)\cap\Omega)}\\
&\le  N3^{-\tau}\|u\|_{L^2(B_6\cap\Omega)},\;\;\mbox{when}\;\;\tau\ge 1,\;\;\mbox{with}\;\;N=N(n).
\end{split}
\end{equation}
 Putting together \eqref{E: hayqueverquededesigualda}, \eqref{E: laprimera}, \eqref{E; lasegundaquenece} and \eqref{E: laterceradsigual}, we get
\begin{equation*}
\|u\|_{L^2(B_1\cap \Omega)}\le N\left[\left(\tfrac 23\right)^{\tau}\|u\|_{L^2(B_6\cap\Omega)}+\left(\tfrac 2\beta\right)^{\tau}\|\tfrac{\p u}{\p \nu}\|_{L^2(\Delta_6)}\right],\ \text{when}\ \tau\ge 20.
\end{equation*}
The later inequality shows that there is $N=N(n,m)\ge 1$ such that
\begin{equation}\label{E: algoaminimizar}
\|u\|_{L^2(B_1\cap \Omega)}\le N\left[\e\|u\|_{L^2(B_8\cap\Omega)}+\e^{-N}\|\tfrac{\p u}{\p \nu}\|_{L^2(\Delta_6)}\right],\ \text{when}\ 0<\e\le \left(\tfrac 23\right)^{20}.
\end{equation}
Finally, the minimization of \eqref{E: algoaminimizar} shows that Lemma \ref{L: interpolacionenlafrontera} holds for $q=0$ and $r=1$ and completes the proof.
\end{proof}

\vskip 10 pt

\begin{proof}[Proof of Theorem \ref{T: elteoremauquenuncaquiseescribir}] Without loss of generality we may assume that $x_0=0$ and $B_{4R}\subset\Omega$ for some  fixed $0<R\le 1$.  To prove that \eqref{E: desigualdadespectralocalizada} holds with $x_0=0$,  we first show that under the hypothesis of Theorem \ref{T: elteoremauquenuncaquiseescribir}, there are $\rho$ and $\theta$ in $(0,1)$ and $N\ge 1$ such that the inequality
\begin{equation}\label{E: laprieradesigualdaimportante}
\|u\|_{L^2(\Omega\times[ -1,1])}\le N\|u\|_{L^2(\Omega^{2\rho}\times[ -2,2])}^\theta\|u\|_{L^2(\Omega\times[ -4,4])}^{1-\theta}
\end{equation}
holds when
\begin{equation}\label{E: la ecuaciondesiempre}
\begin{cases}
\Delta u+\partial^2_yu=0,\ &\text{in}\ \Omega\times\R,\\
u=0,\ &\text{on}\ \partial\Omega\times\R.
\end{cases}
\end{equation}
To prove \eqref{E: laprieradesigualdaimportante}, we  recall that \eqref{E:lasmaravillasqueocurren} holds and $\mathcal A=\{x_1,x_2,\dots, x_l\}$ for some $x_i$ in $\Omega^{4\rho}$, $i=1, \dots , l$, for some $l\ge 1$. Because $B_{\left(1+\e\right)r_{x_i}}(x_i)\cap\Omega$ is star-shaped with center $x_i$,  the same holds for $B_{\left(1+\e\right)r_{x_i}}(x_i,\tau)\cap\Omega\times\R$ and with center $(x_i,\tau)$, $i=1,\dots,l$, for all $\tau\in\R$. Then, from Lemma \ref{L: convexidadlogaritmica}, it follows that
 \begin{equation}\label{E: laconvexidadnecesariaque hacefaltaahroa}
\|u\|_{L^2(B_{r_{x_i}}(x_i,\tau)\cap\Omega\times\R)}\le \|u\|_{L^2(B_{\rho}(x_i,\tau)\cap\Omega\times\R)}^{\theta}\|u\|_{L^2(B_{\left(1+\e\right)r_{x_i}}(x_i,\tau)\cap\Omega\times\R)}^{1-\theta},
\end{equation}
for $i=1,\dots,l$ with
\begin{equation}
 \theta=\min{\left\{\log{\left(1+\e\right)}/\log{\left(\frac{\left(1+\e\right)r_{x_i}}{\rho}\right)}: i=1,\dots,l\right\}}.
 \end{equation}
 Also, from \eqref{E: laconvexidadnecesariainterior}
 \begin{equation}\label{E: laconvexidadnecesariaque hacefaltaahroatambiensegunda}
 \|u\|_{L^2(B_{\rho}(x,\tau))}\le \|u\|_{L^2(B_{\frac \rho 4}(x,\tau))}^{\theta}\|u\|_{L^2(B_{2\rho}(x,\tau))}^{1-\theta},\ \text{with}\ \theta=\frac{\log{2}}{\log{8}}
 \end{equation}
when $x$ is in the closure of $\Omega^{4\rho}$ and $\tau\in\R$. Because \eqref{E:lasmaravillasqueocurren}, \eqref{E: laconvexidadnecesariaque hacefaltaahroa}, \eqref{E: laconvexidadnecesariaque hacefaltaahroatambiensegunda} hold and there are $z_1,\dots,z_m$ in the closure of $\Omega^{4\rho}$ with
\begin{equation*}
\overline{\Omega^{4\rho}}\subset\cup_{i=1}^m B_{\rho}(z_i),
\end{equation*}
it is now clear that there are $N=N(l,m,\rho,\mathcal A)\ge 1$ and a new $\theta=\theta(l,m,\rho,\mathcal A)$ in $(0,1)$ such that \eqref{E: laprieradesigualdaimportante} holds.

 From \eqref{E: laconvexidadnecesariainterior} with $r_1=\frac r4$, $r_2=r$, $r_3=2r$, we see that
 \begin{equation}\label{E: Hadamard1}
\|u\|_{L^2(B_{r}(x_1,y_1))}\le \|u\|_{L^2(B_{\frac r4}(x_1,y_1))}^{\theta}\|u\|_{L^2(B_{2r}(x_1,y_1))}^{1-\theta},\ \theta=\frac{\log{2}}{\log{8}},
\end{equation}
when $B_{2r}(x_1,y_1)\subset\Omega\times [-4,4]$. From \eqref{E: Hadamard1} and the fact that
 \begin{equation*}
 B_{\frac r4}(x_1,y_1)\subset B_r(x_2,y_2),\ \text{when}\  (x_2,y_2)\in B_{\frac r4}(x_1,y_1),
 \end{equation*}
we find that
\begin{equation}\label{E: Hadamard2}
\|u\|_{L^2(B_{r}(x_1,y_1))}\le \|u\|_{L^2(B_{r}(x_2,y_2))}^{\theta}\|u\|_{L^2(\Omega\times [-4,4])}^{1-\theta},\ \theta=\frac{\log{2}}{\log{8}},
\end{equation}
when $B_{2r}(x_1,y_1)\subset\om\times [-4,4]$, $(x_2,y_2)$ is in $B_{\frac r4}(x_1,y_1)$ and $r>0$.
Because $\rho$ is now fixed and $\overline\Omega$ is compact, there are $0<r\le 1$ and $k\ge 2$, which depend on $\rho$, the geometry of $\Omega$ and $R$, such that for all $(x_0,y_0)$ in $\Omega^{2\rho}\times [-2,2]$ there are $k$ points $(x_1,y_1), (x_2,y_2),\dots,(x_k,y_k)$ in $\Omega^{2\rho}\times [-2,2]$ with
\begin{equation*}
\begin{split}
B_{2r}(x_i,y_i)\subset\om &\times [-4,4],\ (x_{i+1},y_{i+1})\in B_{\frac r4}(x_i,y_i),\ \text{for}\  i=0,\dots,k-1,\\
&(x_k,y_k)=(0,\tfrac R{16})\ \text{and}\ B_r(0,\tfrac R{16})\subset B_{\frac R8}^+(0,0).
\end{split}
\end{equation*}
The later and \eqref{E: Hadamard2} show that
\begin{equation}\label{E: Hadamard3}
\|u\|_{L^2(B_{r}(x_i,y_i))}\le \|u\|_{L^2(B_{r}(x_{i+1},y_{i+1}))}^{\theta}\|u\|_{L^2(\Omega\times [-4,4])}^{1-\theta},\ \text{for}\ i=0,\dots,k-1,
\end{equation}
while the compactness of $\Omega$ and the iteration of \eqref{E: Hadamard3} imply that for some new $N= N(l,m,k,\rho,\mathcal A, R)\ge 1$ and $0<\theta=\theta(l,m,k,\rho,\mathcal A, R)<1$
\begin{equation}\label{E: Hadamard 4}
\|u\|_{L^2(\Omega^{2\rho}\times [-2,2])}\le N\|u\|_{L^2( B_{\frac R8}^+(0,0))}^{\theta} \|u\|_{L^2(\Omega\times [-4,4])}^{1-\theta}.
\end{equation}
Putting  together \eqref{E: laprieradesigualdaimportante} and \eqref{E: Hadamard 4}, it follows that when $u$ satisfies \eqref{E: la ecuaciondesiempre},
\begin{equation}\label{E: Hadamard 5}
\|u\|_{L^2(\Omega\times [-1,1])}\le N\|u\|_{L^2( B_{\frac R8}^+(0,0))}^{\theta} \|u\|_{L^2(\Omega\times [-4,4])}^{1-\theta},
\end{equation}
with $N$ and $\theta$ as before. Finally, proceeding as in \cite{G. LebeauL. Robbiano}, for $\lambda>0$ we take
\begin{equation}\label{E: unabuenaelecion}
u(x,y)=\sum_{\lambda_j\le\lambda}a_j\,\tfrac{\sinh{\left(\sqrt{\lambda_j}\,y\right)}}{\sqrt{\lambda_j}}\,e_j(x),
\end{equation}
with $\{a_j\}$ a sequence of real numbers. It satisfies \eqref{E: la ecuaciondesiempre} and also $u(x,0)\equiv 0$ in $\overline\Omega$. Then, we may assume that $\tfrac R8\le \tfrac{\varrho}{10}$ and Lemma \ref{L: interpolacionenlafrontera} implies
\begin{equation}\label{E:usando ladesigualdadeinterpola}
\|u\|_{L^2(B_{\frac R8}(0,0)^+)}\le N\|\tfrac{\partial u}{\partial y}(\cdot,0)\|_{L^2(B_{R})}^\theta\|u\|_{L^2(\Omega\times [-4,4])}^{1-\theta}\, .
\end{equation}
Combining \eqref{E: Hadamard 5} with \eqref{E:usando ladesigualdadeinterpola} leads to
\begin{equation*}\label{E: Hadamard 6}
\|u\|_{L^2(\Omega\times [-1,1])}\le N\|\tfrac{\partial u}{\partial y}(\cdot,0)\|_{L^2(B_{R})}^\theta \|u\|_{L^2(\Omega\times [-4,4])}^{1-\theta}.
\end{equation*}
 This, along with \eqref{E: unabuenaelecion} and standard arguments in \cite{G. LebeauE. Zuazua}, shows that $\Omega$ satisfies the condition \eqref{E: desigualdadespectralocalizada}.
\end{proof}
\begin{remark}\label{R: otraanotacioncojonuda} {\it The reader can now easily derive with arguments based on the Lemmas \ref{L: convexidadlogaritmica}, \ref{L: interpolacionenlafrontera} and Theorem \ref{T: elteoremauquenuncaquiseescribir}, the following spectral inequality:
\begin{theorem}\label{T: otradesigualdaddeobservacionfrontera}
Suppose that $\Omega$ is Lipschitz and verifies \eqref{E: desigualdadespectralocalizada}.
Let $q_0\in\partial\Omega$, $\tau\in\R$ and $R\in(0, 1]$.  Then, there is $N=N(\Omega, R,\tau)$ such that the inequalities
\begin{equation*}\label{E queagitamientotengoyadeesto}
\left(\sum_{\lambda_j\le \lambda} a_j^2+b_j^2\right)^{\frac 12}\le e^{N\sqrt{\lambda}}\left\|\sum_{\lambda_j\le\lambda}(a_j e^{\sqrt{\lambda_j}\,y}+b_je^{-\sqrt{\lambda_j}\,y})\tfrac{\partial e_j}{\partial\nu}\right\|_{L^2(B_R(q_0,\tau)\cap\partial\Omega\times\R)},
\end{equation*}
hold for all sequences $\{a_j\}$ and $\{b_j\}$ in $\R$. In particular, the above inequality holds when $\Omega$ is a  bounded  Lipschitz and locally star-shaped domain.
\end{theorem}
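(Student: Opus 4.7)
The plan is to run the argument from the proof of Theorem \ref{T: elteoremauquenuncaquiseescribir} in the reverse direction: start from the boundary patch and propagate to the whole domain via \eqref{E: desigualdadespectralocalizada}. Introduce the $(n+1)$-dimensional harmonic function
\begin{equation*}
u(x,y)=\sum_{\lambda_j\le\lambda}\bigl(a_j e^{\sqrt{\lambda_j}\,y}+b_j e^{-\sqrt{\lambda_j}\,y}\bigr) e_j(x),
\end{equation*}
which vanishes on $\partial\Omega\times\R$ and whose normal derivative on $\partial\Omega\times\R$ is exactly the function appearing on the right-hand side of the inequality. Since $\Omega\times\R$ is a Lipschitz domain in $\Rm$ with the same Lipschitz constants as $\Omega$, Lemma \ref{L: interpolacionenlafrontera} and the Hadamard three-balls inequality \eqref{E: laconvexidadnecesariainterior} apply to $u$ in $\Rm$.

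First, use orthonormality of $\{e_j\}$ and the identity $\int_{-T}^{T}(ae^{\mu y}+be^{-\mu y})^2dy=(a^2+b^2)\sinh(2T\mu)/\mu+4Tab$ to get two-sided cylindrical control of the coefficients. Combined with $|4ab|\le 2(a^2+b^2)$ and the fact that $s\mapsto\sinh(2s)/s$ is strictly increasing on $(0,\infty)$ with $\sinh(2\sqrt{\lambda_1})/\sqrt{\lambda_1}>2$, this produces the lower bound $c(\Omega)\sum(a_j^2+b_j^2)\le \|u\|_{L^2(\Omega\times[-1,1])}^2$ and the upper bound $\|u\|_{L^2(\Omega\times[-T,T])}\le Ce^{T\sqrt\lambda}(\sum a_j^2+b_j^2)^{1/2}$. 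Next, fix $x_1\in\Omega$ and $R_1\in(0,1]$ with $B_{4R_1}(x_1)\subset\Omega$. Since $u(\cdot,y)$ lies in the range of $\mathcal E_\lambda$ for every $y$, applying the hypothesis \eqref{E: desigualdadespectralocalizada} at each level $y$ and integrating on $[-1,1]$ yields
\begin{equation*}
\|u\|_{L^2(\Omega\times[-1,1])}\le N e^{N\sqrt\lambda}\|u\|_{L^2(B_{R_1}(x_1)\times[-1,1])}.
\end{equation*}

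The essential step is to bound the right-hand side by the boundary observation. Applying Lemma \ref{L: interpolacionenlafrontera} in $\Rm$ to $u$ and the boundary point $(q_0,\tau)$ with parameter $r=R/8$ (so that $\Delta_{6r}(q_0,\tau)\subset B_R(q_0,\tau)\cap\partial\Omega\times\R$) gives
\begin{equation*}
\|u\|_{L^2(B_{R/8}(q_0,\tau)\cap\Omega\times\R)}\le N\Bigl\|\tfrac{\partial u}{\partial\nu}\Bigr\|_{L^2(B_R(q_0,\tau)\cap\partial\Omega\times\R)}^{\theta_1}\|u\|_{L^2(B_R(q_0,\tau)\cap\Omega\times\R)}^{1-\theta_1}.
\end{equation*}
The Lipschitz character of $\Omega$ produces an interior ball $B_{r_0}(p)\subset B_{R/8}(q_0,\tau)\cap\Omega\times\R$ with $r_0=r_0(\Omega,R)>0$; iterating the interior Hadamard three-balls inequality \eqref{E: laconvexidadnecesariainterior} along a finite chain of overlapping balls that stays inside $\Omega\times[-T,T]$ for some $T=T(|\tau|)$ and connects $B_{r_0}(p)$ to $B_{R_1}(x_1)\times[-1,1]$, exactly as in the chain leading to \eqref{E: Hadamard 4}, delivers
\begin{equation*}
\|u\|_{L^2(B_{R_1}(x_1)\times[-1,1])}\le N\|u\|_{L^2(B_{R/8}(q_0,\tau)\cap\Omega\times\R)}^{\theta_2}\|u\|_{L^2(\Omega\times[-T,T])}^{1-\theta_2},
\end{equation*}
for some $\theta_2\in(0,1)$ depending on $\Omega$, $R$ and $\tau$.

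Concatenating the four displayed estimates and bounding both $\|u\|_{L^2(B_R(q_0,\tau)\cap\Omega\times\R)}$ and $\|u\|_{L^2(\Omega\times[-T,T])}$ by the upper bound of the first step produces, with $\theta=\theta_1\theta_2$,
\begin{equation*}
\Bigl(\sum_{\lambda_j\le\lambda}a_j^2+b_j^2\Bigr)^{1/2}\le N e^{N\sqrt\lambda}\Bigl\|\tfrac{\partial u}{\partial\nu}\Bigr\|_{L^2(B_R(q_0,\tau)\cap\partial\Omega\times\R)}^{\theta}\Bigl(\sum_{\lambda_j\le\lambda}a_j^2+b_j^2\Bigr)^{(1-\theta)/2},
\end{equation*}
and dividing by $(\sum a_j^2+b_j^2)^{(1-\theta)/2}$ and raising to the power $1/\theta$ finishes the proof. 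The main obstacle is the construction of the interior-ball chain in the third step: every ball must remain strictly inside $\Omega\times\R$ and the number of links must be bounded only in terms of $\Omega$, $R$ and $\tau$, but this is handled exactly as in the proof of Theorem \ref{T: elteoremauquenuncaquiseescribir}, using compactness of the relevant slab. The last sentence of the theorem is then automatic, since Theorem \ref{T: elteoremauquenuncaquiseescribir} guarantees that every bounded Lipschitz and locally star-shaped domain verifies \eqref{E: desigualdadespectralocalizada}.
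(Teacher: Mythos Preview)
Your proof is correct and follows essentially the route the paper has in mind: introduce the harmonic extension $u$ on $\Omega\times\R$, use Lemma~\ref{L: interpolacionenlafrontera} at $(q_0,\tau)$ to pass from the boundary normal derivative to a small interior ball, propagate by a Hadamard chain as in \eqref{E: Hadamard2}--\eqref{E: Hadamard 4}, and close with two-sided $L^2$ control of the coefficients. The paper leaves this theorem as a remark, pointing to Lemmas~\ref{L: convexidadlogaritmica}, \ref{L: interpolacionenlafrontera} and the argument of Theorem~\ref{T: elteoremauquenuncaquiseescribir}; your write-up is a faithful execution of that outline.

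One point worth noting: the paper's hint cites Lemma~\ref{L: convexidadlogaritmica} (the star-shaped boundary three-balls inequality), presumably to reproduce the step \eqref{E: laprieradesigualdaimportante} and thereby pass from a boundary neighborhood to all of $\Omega\times[-1,1]$. You bypass this entirely by applying the assumed spectral inequality \eqref{E: desigualdadespectralocalizada} slice by slice in $y$, which localizes $\|u\|_{L^2(\Omega\times[-1,1])}$ to an interior cylinder directly. This is in fact the cleaner choice under the stated hypothesis, since it uses only that $\Omega$ is Lipschitz and verifies \eqref{E: desigualdadespectralocalizada}; no local star-shapedness is needed for the first assertion. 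The final sentence about locally star-shaped domains is then immediate from Theorem~\ref{T: elteoremauquenuncaquiseescribir}, as you observe.
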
}
\end{remark}
\section{Boundary observability}\label{S:5}
 Throughout this section $\Omega$ is a bounded Lipschitz domain in $\mathbb{R}^n$ and $T$ is a positive time.
We first study quantitative estimates of real analyticity with respect to the space-time variables for caloric functions in $\Omega\times (0,T)$ with zero lateral Dirichlet boundary conditions.
Let
\begin{equation}\label{E: definiticiofunciondegreen}
G(x,y,t)=\sum_{j\ge1}e^{-\lambda_jt}e_j(x)e_j(y)
\end{equation}
be the Green's function for $\Delta-\partial_t$ on $\Omega$ with zero lateral Dirichlet boundary condition. By the maximum principle
\begin{equation*}
0\le G(x,y,t)\le\left(4\pi t\right)^{-\frac n2}e^{-\frac{|x-y|^2}{4t}},
\end{equation*}
and
\begin{equation}\label{E: algo que ayuda}
G(x,x,t)=\sum_{j\ge 1}e^{-\lambda_jt}e_j(x)^2\le \left(4\pi t\right)^{-\frac n2}.
\end{equation}
 \begin{definition}\label{D: fromteralocalrealanalitica} Let $q_0\in\partial\Omega$ and $0<R\le 1$. We say that $\triangle_{4R}(q_0)$ is real-analytic with constants $\varrho$ and $\delta$ if
for each $q\in \triangle_{4R}(q_0)$, there are a new rectangular coordinate system where $q=0$, and a real-analytic function $\phi: B'_{\varrho}\subset\mathbb{R}^{n-1}
\rightarrow \mathbb{R}$ verifying
\begin{equation*}
\begin{split}
&\phi(0')=0,\;\;|\partial^{\alpha}\phi(x')|\leq |\alpha|!\delta^{-|\alpha|-1},\;\;\mbox{when}\;\;x'\in B'_{\varrho},\;\alpha\in\mathbb{N}^{n-1},\\
&B_\varrho\cap\Omega=B_\varrho\cap\{(x',x_n):x'\in B'_{\varrho},\;\; x_n>\phi(x')\},\\
&B_\varrho\cap\partial\Omega=B_\varrho\cap\{(x',x_n):x'\in B'_{\varrho},\;\; x_n=\phi(x')\}.
\end{split}
\end{equation*}
\end{definition}
\noindent Here, $B'_\varrho$ denotes the open ball of radius $\varrho$ and with center at $0'$ in $\mathbb{R}^{n-1}$.

\begin{lemma}\label{L: analiticidadcalorica} Let  $q_0\in\partial\Omega$ and  $R\in (0, 1]$. Assume that $\triangle_{4R}(q_0)$ is real-analytic with constants $\varrho$ and $\delta$. Then, there are $N=N(\varrho,\delta)$, $\rho=\rho(\varrho,\delta)$, with $0<\rho\le1$, such that
\begin{equation}\label{E: elfinal de estamalditoterme44}
|\partial_x^\alpha\partial_t^\beta e^{t\Delta}f(x)|\le\frac{N\left(t-s\right)^{-\frac n4}\, e^{8R^2/\left(t-s\right)} |\alpha|!\,\beta!}{(R\rho)^{|\alpha |}\left(\left(t-s\right)/4\right)^\beta}\|e^{s\Delta}f\|_{L^2(\Omega)},
\end{equation}
when $x\in B_{2R}(q_0)\cap\overline\Omega$, $0\le s<t$, $\alpha\in \N^n$ and $\beta\ge 0$.
\end{lemma}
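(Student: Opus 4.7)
The plan is to split $\partial_x^\alpha\partial_t^\beta e^{t\Delta}f(x)$ into a time-derivative part, handled by the spectral semigroup, and a pure spatial-derivative part, handled by quantitative real analyticity up to the analytic portion $\triangle_{4R}(q_0)$ of $\partial\Omega$. Setting $\tau=t-s$, $g=e^{s\Delta}f$, and using $\partial_\sigma e^{\sigma\Delta}=\Delta e^{\sigma\Delta}$,
\[
\partial_t^\beta e^{t\Delta}f=\Delta^\beta e^{\tau\Delta}g=e^{\tau\Delta/2}h,\qquad h:=\Delta^\beta e^{\tau\Delta/2}g.
\]
Expanding $g$ in the eigenbasis $\{e_j\}$ and using $\sup_{\lambda>0}\lambda^{2\beta}e^{-\tau\lambda}=(2\beta/(\tau e))^{2\beta}$ together with $\beta^\beta\le e^\beta\beta!$ yields
\[
\|h\|_{L^2(\Omega)}\le\frac{\beta!}{(\tau/2)^\beta}\,\|g\|_{L^2(\Omega)}.
\]
It therefore suffices to prove, for every $h\in L^2(\Omega)$, the pure-spatial estimate
\[
\bigl|\partial_x^\alpha e^{\tau\Delta/2}h(x)\bigr|\le\frac{N\tau^{-n/4}e^{cR^2/\tau}|\alpha|!}{(R\rho)^{|\alpha|}}\|h\|_{L^2(\Omega)},\quad x\in B_{2R}(q_0)\cap\overline\Omega,
\]
with $N\ge 1$, $\rho\in(0,1]$ and $c\le 8$ depending only on $\varrho,\delta$; combining this with the above bound on $\|h\|_{L^2}$ yields \eqref{E: elfinal de estamalditoterme44}.

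For $\alpha=0$, Cauchy--Schwarz and the identity $\|G(x,\cdot,\tau/2)\|_{L^2(\Omega)}^2=G(x,x,\tau)\le(4\pi\tau)^{-n/2}$ (cf.\ \eqref{E: algo que ayuda}) deliver the $\tau^{-n/4}$ prefactor at no exponential cost. For $|\alpha|\ge1$, set $W(x,\sigma)=e^{\sigma\Delta}h(x)$, which is caloric in $\Omega\times(0,\infty)$ with zero lateral Dirichlet data. Fix $q\in\triangle_{4R}(q_0)$, let $\phi$ be the real-analytic graph function from Definition~\ref{D: fromteralocalrealanalitica}, and straighten the boundary near $q$ via the real-analytic diffeomorphism $\Phi(y',y_n)=(y',y_n+\phi(y'))$. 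The pulled-back function $\widetilde W=W\circ\Phi$ solves a parabolic equation with real-analytic coefficients on a half-ball, vanishing on the flat face $\{y_n=0\}$, whose analytic bounds depend only on $\varrho,\delta$. Extending $\widetilde W$ oddly across $\{y_n=0\}$ and the coefficients by matching even/odd parities produces a solution of a parabolic equation with real-analytic coefficients on a full ball, to which classical interior real-analyticity estimates apply, giving
\[
\bigl|\partial_y^\alpha\widetilde W(y,\tau/2)\bigr|\le\frac{N|\alpha|!}{(R\rho)^{|\alpha|}}\sup_{|z-y|\le R\rho,\ |\sigma-\tau/2|\le(R\rho)^2}|\widetilde W(z,\sigma)|,
\]
uniformly for $y$ in the $\Phi^{-1}$-image of $B_{2R}(q_0)\cap\overline\Omega$, with $N$ and $\rho$ depending only on $\varrho,\delta$. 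Transferring back to $x$-coordinates preserves this estimate after a harmless shrinking of $\rho$, and a finite cover of $\triangle_{4R}(q_0)$ handles all boundary base-points.

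The sup-norm on the right is controlled via the $\alpha=0$ argument extended to a complex neighborhood: the Gaussian upper bound $G(x,y,\sigma)\le(4\pi\sigma)^{-n/2}e^{-|x-y|^2/(4\sigma)}$, allowed with $x$ shifted to a point whose imaginary part has size at most $R\rho$, gives an $L^2_y$-bound of order $\sigma^{-n/4}e^{cR^2/\sigma}$ with $c\le 8$. Shrinking $\rho$ so that $(R\rho)^2\le\tau/4$ keeps $\sigma$ in the Cauchy window comparable to $\tau$, so $\sigma^{-n/4}\sim\tau^{-n/4}$ and $e^{cR^2/\sigma}\le e^{8R^2/\tau}$. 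Combined with the time-derivative reduction from the first paragraph, together with classical interior analyticity estimates for caloric functions at points bounded away from $\partial\Omega$, this assembles \eqref{E: elfinal de estamalditoterme44}. The main obstacle is the quantitative real-analyticity up to the analytic boundary piece with constants depending only on $\varrho,\delta$: the odd-reflection trick is clean on a flat face but requires careful tracking of how the reflected and straightened coefficients inherit their analytic radii from those of $\phi$; one can alternatively bypass reflection by a direct Nirenberg--Morrey argument in analytic-scale spaces, but either way the bookkeeping for the analytic norms of the transported coefficients under the flattening map $\Phi$ is the technical heart of the proof.
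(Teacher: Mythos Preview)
Your approach differs from the paper's and has a genuine gap. The paper does \emph{not} use parabolic real-analyticity. Instead, for each fixed $t$ it introduces an auxiliary variable $y$ and sets
\[
u(x,t,y)=\sum_{j\ge 1}a_j\,e^{-\lambda_j t+\sqrt{\lambda_j}\,y}e_j(x),
\]
which for fixed $t$ is \emph{harmonic} in $(x,y)\in\Omega\times\mathbb{R}$ and vanishes on $\partial\Omega\times\mathbb{R}$. Classical elliptic real-analyticity up to a real-analytic boundary (Morrey, John) then gives
\[
\|\partial_x^\alpha\partial_t^\beta u(\cdot,t,\cdot)\|_{L^\infty(B_{2R}(q_0,0)\cap\Omega\times\mathbb{R})}
\le \frac{N|\alpha|!}{(R\rho)^{|\alpha|}}\Big(\fint_{B_{4R}(q_0,0)\cap\Omega\times\mathbb{R}}|\partial_t^\beta u|^2\,dx\,dy\Big)^{1/2},
\]
with $\rho=\rho(\varrho,\delta)$ independent of $t$. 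The $t$-dependence enters only through the $L^2$-average on the right, where the factor $e^{y^2/(2t)}$ in the spectral bound on $\partial_t^\beta u$ (integrated over $|y|\le 4R$) produces exactly the $e^{8R^2/t}$ in the statement.

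Your parabolic route breaks at the step ``shrinking $\rho$ so that $(R\rho)^2\le\tau/4$''. This forces $\rho$ to depend on $\tau=t-s$, contradicting the conclusion $\rho=\rho(\varrho,\delta)$. The point is structural: interior parabolic analyticity on a cylinder $B_r\times(\sigma_0-r^2,\sigma_0)$ requires $r\lesssim\sqrt{\sigma_0}$, so for small $\tau$ you only get spatial analyticity radius $\sim\sqrt\tau$, yielding $|\alpha|!/\tau^{|\alpha|/2}$ rather than $|\alpha|!/(R\rho)^{|\alpha|}$. One cannot absorb the discrepancy into the exponential: the inequality $(2R\rho/\sqrt\tau)^{|\alpha|}\le e^{8R^2/\tau}$ fails for large $|\alpha|$ at any fixed $\tau$. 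The elliptic extension in the paper is precisely the device that decouples the spatial analyticity radius from $\tau$. Your complex-shift argument for the Green's function is also not justified as stated (the Dirichlet heat kernel has no obvious Gaussian bound for complex $x$), but even granting it, the $\rho$--$\tau$ coupling above remains fatal to the scheme.
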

\begin{proof} It suffices to prove (\ref{E: elfinal de estamalditoterme44}) for the case when $s=0$. Let $f=\sum_{ j\geq 1}a_je_j$.  Set then
\begin{equation*}
\begin{aligned}
&u(x,t,y)=\sum_{j=1}^{+\infty}a_j e^{-\lambda_j t+\sqrt{\lambda_j}y}e_j(x),\;\ x\in\overline\Omega, t>0, y\in\R, \\
&u(x,t)=u(x,t,0)=\sum_{j=1}^{+\infty} a_j e^{-\lambda_jt} e_j(x)= e^{t\Delta}f(x),x\in\overline\Omega, t>0.
\end{aligned}
\end{equation*}
 We have
\begin{equation*}
\begin{cases}
\partial^2_y\partial_t^\beta u+\Delta \partial_t^\beta u=0,\ &\text{in}\ \Omega\times\R,\\
\partial_t^\beta u=0,\ &\text{on}\  \partial\Omega\times\R,
\end{cases}
\end{equation*}
\begin{equation}\label{E: la formula de la derivada}
\partial_t^\beta u(x,t,y)=\left(-1\right)^\beta\sum_{j\ge 1}a_j\lambda_j^\beta e^{-\lambda_jt+\sqrt{\lambda_j}y}e_j(x).
\end{equation}
Because $\triangle_{4R}(q_0)$ is real-analytic, there are $N=N(\varrho, \delta)$ and $\rho=\rho(\varrho, \delta)$ such that
\begin{equation}\label{E: fundamental3}
\begin{split}
&\|\partial^\alpha_x\partial_t^\beta u(\cdot,t,\cdot)\|_{L^\infty(B_{2R}(q_0,0)\cap\Omega\times\R)}\\ &\le \frac{N |\alpha|!}{(R\rho)^{|\alpha |}} \left(\text{\rlap |{$\int_{B_{4R}(q_0,0)\cap\Omega\times\R}$}}|\partial_t^\beta u(x,t,y)|^2\,dxdy\right)^{\frac 12},\;\;\mbox{when}\;\;\alpha\in\N^n, \beta\ge 0.
\end{split}
\end{equation}
 For the later see \cite[Chapter 5]{Morrey} and \cite[Chapter 3]{FJohn2}. Now, it follows from \eqref{E: la formula de la derivada} that
\begin{equation*}\label{E: meenccantaestaformula}
|\partial_t^\beta u(x,t,y)|\le \Big(\sum_{j\ge 1}a_j^2\Big)^{\frac 12}\Big(\sum_{j\ge 1}\lambda_j^{2\beta}e^{-2\lambda_jt+2\sqrt{\lambda_j}y}e_j(x)^2\Big)^{\frac12}.
\end{equation*}
Also, it stands that
\begin{multline*}\label{E; unabuena secuencia de formulas}
\sum_{j\ge 1}\lambda_j^{2\beta}e^{-2\lambda_jt+2\sqrt{\lambda_j}y}e_j(x)^2=t^{-2\beta}e^{y^2/t}\sum_{j\ge 1}\left(\lambda_jt\right)^{2\beta}e^{-\lambda_jt}e_j(x)^2e^{-\left(\sqrt{\lambda_jt}-\frac y{\sqrt t}\right)^2}\\
\le t^{-2\beta}e^{y^2/t}\sum_{j\ge 1}\left(\lambda_jt\right)^{2\beta}e^{-\lambda_jt}e_j(x)^2=t^{-2\beta}e^{y^2/t}\sum_{j\ge 1}\left(\lambda_jt\right)^{2\beta}e^{-\lambda_jt/2}e^{-\lambda_jt/2}e_j(x)^2.
\end{multline*}
 Next Stirling's formula shows that
\begin{equation*}\label{E: hayque ver que fornulazas mesalen}
\max_{x\ge 0}x^{2\beta}e^{-x/2}=4^{4\beta}\beta^{2\beta}e^{-2\beta}\lesssim 4^{4\beta}\left(\beta!\right)^2,\;\;\mbox{when}\;\; \beta\ge 0.
\end{equation*}
Finally, the above three inequalities, as well as  \eqref{E: algo que ayuda},  imply that
\begin{equation}\label{E: totalmente necesariom}
|\partial_t^\beta u(x,t,y)|\le \Big(2\pi t\Big)^{-\frac n4}\, e^{y^2/2t}\Big(\sum_{j\ge 1}a_j^2\Big)^{\frac 12}\left(t/4\right)^{-\beta}\beta!,
\end{equation}
for $t>0$, $x\in\overline\Omega$, $y\in\R$ and $\beta\ge 0$. The later inequality and \eqref{E: fundamental3} imply that
\begin{equation*}\label{E: elfinal de estamalditoterme}
|\partial_x^\alpha\partial_t^\beta u(x,t)|\le\frac{Nt^{-\frac n4}\, e^{8R^2/t} |\alpha|!\,\beta!}{(R\rho)^{|\alpha |}\left(t/4\right)^\beta}\Big(\sum_{j\ge 1}a_j^2\Big)^{\frac 12},
\end{equation*}
when $x\in B_{2R}(q_0)\cap \overline{\Omega}$, $t>0$, $\alpha\in \N^n$ and $\beta\ge 0$.
\end{proof}

The next caloric interpolation inequality plays the same role for the boundary case as the inequality (\ref{E: dos esferas un cilindros desigualdad2}) for the interior case.

\begin{theorem}\label{interpolation}
Let $\Omega$ be a bounded  Lipschitz domain in $\mathbb{R}^n$ with
constants $(m,\varrho)$ and satisfy the condition (\ref{E: desigualdadespectralocalizada}). Then, given $0<R\leq1$, $0\leq t_1< t_2\leq T\leq1$
and $q\in\partial\Omega$, there are $N=N(\Omega,R,m,\varrho)\geq1$
and $\theta=\theta(m,\varrho)$, with $\theta\in(0,1)$, such that
\begin{equation*}
\|e^{t_2\Delta}f\|_{L^2(\Omega)}\leq \left(Ne^{\frac{N}{t_2-t_1}}\|\tfrac{\partial}{\partial\nu}\,e^{t\Delta}f\|_{L^2(\Delta_{R}(q)\times
(t_1,t_2))}\right)^\theta\|e^{t_1\Delta}f\|^{1-\theta}_{L^2(\Omega)},
\;f\in L^2(\Omega).
\end{equation*}
\end{theorem}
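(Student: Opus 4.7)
The plan is to replicate the structure of Theorem~\ref{T: 2global 2-sphere 1-cylinder}, with the interior observation on $\omega$ replaced by a boundary observation of the normal derivative on $\Delta_R(q)$. Split $f = \mathcal E_\lambda f + \mathcal E_\lambda^\perp f$, control the high-frequency part by the spectral decay $\|e^{t_2\Delta}\mathcal E_\lambda^\perp f\|_{L^2(\Omega)} \le e^{-\lambda(t_2-t_1)}\|e^{t_1\Delta} f\|_{L^2(\Omega)}$, and estimate the low-frequency part using a spatial boundary spectral inequality
\begin{equation*}
\|\mathcal E_\lambda g\|_{L^2(\Omega)} \le Ne^{N\sqrt\lambda}\|\tfrac{\partial}{\partial\nu}\mathcal E_\lambda g\|_{L^2(\Delta_R(q))}, \qquad g\in L^2(\Omega),\ \lambda>0.
\end{equation*}
This last inequality is the boundary counterpart of Theorem~\ref{T: 1spectralinequality} and is proved along the lines sketched in Remark~\ref{R: otraanotacioncojonuda}, via the harmonic lifting $v(x,y) = \sum_{\lambda_j \le \lambda} c_j \cosh(\sqrt{\lambda_j}\,y)\,e_j(x)$ of $\mathcal E_\lambda g$, which is harmonic in $\Omega\times\R$ and vanishes on $\partial\Omega\times\R$: Lemma~\ref{L: interpolacionenlafrontera} applied in $\R^{n+1}$ controls $\|v\|_{L^2}$ on a small $(n+1)$-dimensional half-ball at $(q,0)$ by $\|\partial_\nu v\|_{L^2}$ on a neighboring spacetime boundary piece; the three-ball inequality \eqref{E: laconvexidadnecesariainterior} and the interior spectral inequality \eqref{E: desigualdadespectralocalizada} then propagate this to all of $\Omega\times\{0\}$; finally the factors $\cosh(\sqrt{\lambda_j}y)$ for $|y|\le R \le 1$ and $\lambda_j \le \lambda$ are absorbed into $e^{N\sqrt\lambda}$.

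Applying this spectral inequality to $g = e^{t\Delta}f$, using that $\mathcal E_\lambda$ commutes with $e^{t\Delta}$ and that $t\mapsto \|e^{t\Delta}\mathcal E_\lambda f\|_{L^2(\Omega)}$ is nonincreasing, gives for each $t \in ((t_1+t_2)/2, t_2)$
\begin{equation*}
\|e^{t_2\Delta}\mathcal E_\lambda f\|_{L^2(\Omega)} \le Ne^{N\sqrt\lambda}\|\tfrac{\partial}{\partial\nu}e^{t\Delta}\mathcal E_\lambda f\|_{L^2(\Delta_R(q))}.
\end{equation*}
Squaring and integrating in $t$ over this subinterval yields
\begin{equation*}
\|e^{t_2\Delta}\mathcal E_\lambda f\|_{L^2(\Omega)} \le \frac{Ne^{N\sqrt\lambda}}{\sqrt{t_2-t_1}}\|\tfrac{\partial}{\partial\nu}e^{t\Delta}\mathcal E_\lambda f\|_{L^2(\Delta_R(q)\times((t_1+t_2)/2,t_2))}.
\end{equation*}
Writing $\partial_\nu e^{t\Delta}\mathcal E_\lambda f = \partial_\nu e^{t\Delta}f - \partial_\nu e^{t\Delta}\mathcal E_\lambda^\perp f$ reduces matters to a high-frequency boundary normal-trace estimate, handled via the classical hidden boundary regularity for the heat equation, $\int_{s_1}^{s_2}\|\partial_\nu e^{(t-s_1)\Delta} h\|^2_{L^2(\partial\Omega)}\,dt \le C(\Omega)\|h\|_{L^2(\Omega)}^2$, applied with $s_1 = (t_1+t_2)/2$ and $h = \mathcal E_\lambda^\perp e^{s_1\Delta}f$, followed by the spectral decay $\|h\|_{L^2(\Omega)} \le e^{-\lambda(t_2-t_1)/2}\|e^{t_1\Delta}f\|_{L^2(\Omega)}$. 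Collecting terms produces
\begin{equation*}
\|e^{t_2\Delta}f\|_{L^2(\Omega)} \le Ne^{N\sqrt\lambda}\Bigl[\tfrac{1}{\sqrt{t_2-t_1}}\|\tfrac{\partial}{\partial\nu}e^{t\Delta}f\|_{L^2(\Delta_R(q)\times(t_1,t_2))} + e^{-\lambda(t_2-t_1)/2}\|e^{t_1\Delta}f\|_{L^2(\Omega)}\Bigr],
\end{equation*}
and an optimization in $\lambda>0$ (setting $\epsilon = e^{-\lambda(t_2-t_1)/2}$, using $\max_\lambda e^{N\sqrt\lambda - \lambda(t_2-t_1)}\le e^{N^2/(t_2-t_1)}$ together with Young's inequality, just as in the passage from \eqref{E: llegandoalfinal} to \eqref{E: llegandoalfunal}) yields the claimed interpolation inequality.

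The main obstacle is the derivation of the spatial boundary spectral inequality in the first paragraph. The harmonic lifting reduces the problem to boundary interpolation via Lemma~\ref{L: interpolacionenlafrontera}, but the normal derivative of $v$ on $\partial\Omega \times \R$ carries the factors $\cosh(\sqrt{\lambda_j}y)$ rather than being a pure multiple of $\partial_\nu \mathcal E_\lambda g$, and passing from the resulting spacetime boundary norm back to the pure spatial norm $\|\partial_\nu \mathcal E_\lambda g\|_{L^2(\Delta_R(q))}$ without losing the $e^{N\sqrt\lambda}$ scaling requires a careful analysis of the $y$-integration localized near $y=0$. A secondary technical point is the hidden boundary regularity estimate in merely Lipschitz $\Omega$: since global $H^2$-regularity may fail, the $L^2$ normal-trace bound must be produced from an energy identity of Rellich type in the spirit of the computation underlying Lemma~\ref{L: carkemanb�sico}, with $x\cdot\nabla u + \tfrac{n-2}{2}u$ as multiplier applied now to the parabolic equation $u_t - \Delta u = 0$.
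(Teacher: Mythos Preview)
Your approach is genuinely different from the paper's. The paper does \emph{not} reduce to a spatial boundary spectral inequality; it works directly with the time-dependent solution. Concretely, the paper combines two ingredients: (i) the interior parabolic interpolation \eqref{E: dos esferas un cilindros desigualdad2} to pass from $\|e^{t_2\Delta}f\|_{L^2(\Omega)}$ to $\|e^{t_2\Delta}f\|_{L^2(B)}$ for a small interior ball $B$ near $q$; and (ii) a local boundary interpolation (Lemma~\ref{lemma7}, equivalently Lemma~\ref{lemma6}) controlling the latter by $\|\partial_\nu e^{t\Delta}f\|_{L^2(\triangle_{8R}(q)\times(t_1,t_2))}$ and $\|e^{t_1\Delta}f\|_{L^2(\Omega)}$. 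Step (ii) is proved via a \emph{parabolic} Carleman inequality (Lemma~\ref{lemma1}) applied to $u(x,t)=e^{(T-t)\Delta}f$ with a Gaussian weight and a carefully chosen time cut-off, together with a local energy lower bound (Lemma~\ref{lemma4}). No spectral splitting, no stationary boundary spectral inequality, and no hidden-regularity estimate is needed.

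The gap you identify in your own proposal is real and is precisely the reason the paper avoids your route. To pass from $\|\partial_\nu v\|_{L^2(\triangle_R(q)\times(-r,r))}$ with $v(x,y)=\sum_{\lambda_j\le\lambda}c_j\cosh(\sqrt{\lambda_j}\,y)e_j(x)$ back to $\|\partial_\nu\mathcal E_\lambda g\|_{L^2(\triangle_R(q))}$, you would need an inequality of the form
\[
\int_{-r}^{r}\int_{\triangle_R(q)}\Bigl|\sum_{\lambda_j\le\lambda} c_j\cosh(\sqrt{\lambda_j}\,y)\,\partial_\nu e_j\Bigr|^2\,d\sigma\,dy\;\le\; Ce^{C\sqrt\lambda}\int_{\triangle_R(q)}\Bigl|\sum_{\lambda_j\le\lambda} c_j\,\partial_\nu e_j\Bigr|^2\,d\sigma,
\]
but this can fail: cancellation among the $c_j\,\partial_\nu e_j$ at $y=0$ need not persist once the $j$-dependent weights $\cosh(\sqrt{\lambda_j}\,y)$ are introduced. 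Shrinking $r$ to $c/\sqrt\lambda$ does not help, since the left side then scales like $r$ while the right side does not, and the Taylor remainder still mixes the frequencies. In short, Theorem~\ref{T: otradesigualdaddeobservacionfrontera} gives a \emph{spacetime} boundary spectral inequality, not a spatial one, and the two are not interchangeable by soft arguments. Your hidden-regularity concern is secondary and can be handled in Lipschitz domains (cf.\ the non-tangential estimates \eqref{E: acojono}--\eqref{E: supernecesariamente} derived in the Appendix), but the first obstacle is the essential one; the paper's parabolic Carleman route is designed to bypass it.
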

To prove  Theorem \ref{interpolation}, we need first some
lemmas. We  begin with the following Carleman inequality (See \cite{EscauriazaFernandez1} and \cite{EscauriazaSereginSverak}).
\begin{lemma}\label{lemma1}
Let $\Omega$ be a bounded Lipschitz domain in $\mathbb{R}^n$ with
constants $(m,\varrho)$, $0\notin\Omega$ and $\sigma(t)=te^{-Mt}$, $M>0$. Then
\begin{equation*}
\begin{split}
&\sqrt{\tau M}\|\sigma(t)^{-\tau}e^{-|x|^2/8t}h\|_{
L^2(\Omega\times(0,+\infty))}\\&
\leq \|t^{1/2}\sigma(t)^{-\tau}e^{-|x|^2/8t}(\Delta+\partial_{t})h\|
_{L^2(\Omega\times(0,+\infty))}\\
&+\||q|^{1/2}\sigma(t)^{-\tau}e^{-|q|^{2}/8t}\tfrac{\partial h}{\partial\nu}
\|_{{
L^2(\partial\Omega\times(0,+\infty))}},
\end{split}
\end{equation*}
when $\tau\geq1$ and $h\in C_{0}^\infty(\overline{\Omega}\times[0,+\infty))$ with
$h=0$ on $\partial\Omega\times[0,+\infty)$.
\end{lemma}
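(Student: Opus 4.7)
The plan is to prove this Carleman estimate by the classical conjugation-and-commutator method. I set $\phi(x,t) := -|x|^2/(8t) - \tau\log\sigma(t)$, so the weight is $G := e^\phi = \sigma(t)^{-\tau}e^{-|x|^2/(8t)}$, and let $v := Gh$. Because $0\notin\overline{\Omega}$ one has $|x|\geq c>0$ on $\Omega$, so $e^{-|x|^2/(8t)}$ dominates $\sigma(t)^{-\tau}$ and forces $v$ together with all its derivatives to decay faster than any power of $t$ as $t\to 0^+$; combined with the compact support of $h$ near $t=+\infty$ and the Dirichlet condition $v|_{\partial\Omega\times(0,\infty)}=0$, this justifies every integration by parts below. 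A direct calculation gives $G(\partial_t+\Delta)(G^{-1}v) = Sv + Av$, with the symmetric and antisymmetric parts (with respect to $L^2(\Omega\times(0,\infty))$)
\begin{equation*}
Sv := \Delta v + q_s v, \quad q_s := |\nabla\phi|^2 - \partial_t\phi, \quad Av := \partial_t v - 2\nabla\phi\cdot\nabla v - \Delta\phi \cdot v.
\end{equation*}

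I then expand
\begin{equation*}
\|t^{1/2}(S+A)v\|_{L^2}^2 = \|t^{1/2}Sv\|^2 + \|t^{1/2}Av\|^2 + 2\int tSv\cdot Av\, dxdt,
\end{equation*}
discard the first two nonnegative summands, and evaluate the cross term two different ways. Using $S^*=S$ in $x$ with $v|_{\partial\Omega}=0$ gives $\int Sv\cdot tAv = \int v\cdot tSAv + \int\int_{\partial\Omega}(\partial_\nu v)(tAv)\,d\sigma dt$, and on $\partial\Omega$ the vanishing of $v$ forces $tAv = -2t(\nabla\phi\cdot\nu)\partial_\nu v$, so this boundary integral equals $-2\int\int_{\partial\Omega}t(\nabla\phi\cdot\nu)(\partial_\nu v)^2\,d\sigma dt$. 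Using $A^*=-A$ in $(x,t)$ --- whose time boundary vanishes by the decay/support properties above and whose $\partial\Omega$ boundary vanishes because $v=0$ there --- gives $\int Av\cdot tSv = -\int v\cdot Sv - \int v\cdot tASv$. Averaging these two formulas and substituting $\nabla\phi\cdot\nu = -q\cdot\nu/(4t)$ yields
\begin{equation*}
2\int tSv\cdot Av\, dxdt = \int v\cdot t[S,A]v\, dxdt - \int v\cdot Sv\, dxdt + \tfrac{1}{2}\int_0^{\infty}\!\!\int_{\partial\Omega}(q\cdot\nu)(\partial_\nu v)^2\, d\sigma dt.
\end{equation*}

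The decisive step is to compute $[S,A]$ and verify the pointwise identity $\int v\cdot t[S,A]v - \int v\cdot Sv = \tau M\int v^2$. A routine commutator computation gives
\begin{equation*}
[S,A]v = -4 D^2\phi:D^2v - 4\nabla\Delta\phi\cdot\nabla v + v\bigl(-\Delta^2\phi - \partial_t q_s + 2\nabla\phi\cdot\nabla q_s\bigr).
\end{equation*}
For our specific $\phi$ one has $\nabla\Delta\phi=0$, $\Delta^2\phi=0$, and $D^2\phi = -I/(4t)$, so $-4D^2\phi:D^2v = \Delta v/t$ and the identity $\int v\Delta v\,dx = -\int|\nabla v|^2 dx$ cancels the $-\int|\nabla v|^2$ coming from $\int v\cdot Sv$. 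A short calculation with $\sigma'/\sigma = 1/t - M$ yields $q_s = -|x|^2/(16t^2) + \tau/t - \tau M$, and the remaining pointwise combination $t(-\partial_t q_s + 2\nabla\phi\cdot\nabla q_s) - q_s$ telescopes: the $|x|^2$-pieces cancel, the $1/t$-pieces cancel, and the residue is precisely the constant $\tau M$.

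Assembling the pieces, and using $|q\cdot\nu|\leq |q|$ to sign-control the boundary contribution, yields
\begin{equation*}
\tau M\int v^2\,dxdt \leq \|t^{1/2}G(\Delta+\partial_t)h\|_{L^2}^2 + \tfrac{1}{2}\int_0^\infty\!\!\int_{\partial\Omega}|q|(\partial_\nu v)^2\,d\sigma dt.
\end{equation*}
Since $h|_{\partial\Omega}=0$, on $\partial\Omega$ we have $\partial_\nu v = G\,\partial_\nu h = \sigma(t)^{-\tau}e^{-|q|^2/(8t)}\partial_\nu h$, so the boundary term equals $\tfrac{1}{2}\||q|^{1/2}\sigma(t)^{-\tau}e^{-|q|^2/(8t)}\partial_\nu h\|_{L^2(\partial\Omega\times(0,\infty))}^2$. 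Taking square roots and using $\sqrt{a^2+b^2}\leq a+b$ for nonnegative $a,b$ produces the stated inequality (the factor $\tfrac{1}{2}$ is harmlessly absorbed). The main obstacle is the pointwise algebraic cancellation in the previous paragraph: it is driven entirely by the particular choice $\sigma(t)=te^{-Mt}$, whose logarithmic derivative $1/t-M$ is engineered so that the singular $1/t$ contributions from $\partial_t q_s$, $\nabla\phi\cdot\nabla q_s$, and $q_s$ cancel while the positive constant $\tau M$ survives --- any other temporal factor would destroy this balance and leave only a degenerate or negative lower bound.
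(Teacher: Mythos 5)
Your proof is correct and follows essentially the same route as the paper: the same conjugation $v=Gh$, the same split of $G(\Delta+\partial_t)G^{-1}$ into the symmetric part $Sv=\Delta v-\tfrac{|x|^2}{16t^2}v+\tau(\partial_t\log\sigma)v$ and antisymmetric part $Av=\partial_t v+\tfrac{x}{2t}\cdot\nabla v+\tfrac{n}{4t}v$, discarding the pure squares of $t^{1/2}Sv$ and $t^{1/2}Av$, and extracting $\tau M\int v^2$ plus the Rellich--Ne\v cas boundary term from the cross term with the same choice $\sigma(t)=te^{-Mt}$ so that $\partial_t(t\partial_t\log\sigma)=-M$. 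The paper organizes the cross-term computation as four explicit integration-by-parts identities rather than your commutator/averaging bookkeeping, but the underlying calculation and cancellations are identical.
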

\begin{proof}
First, let $f=\sigma(t)^{-\tau}e^{-|x^2|/8t}h$. Then
\begin{equation*}
\begin{split}
&\sigma(t)^{-\tau}e^{-|x|^2/8t}(\Delta+\partial_t)h=
\sigma(t)^{-\tau}e^{-|x|^2/8t}(\Delta+\partial_t)
(\sigma(t)^{\tau}e^{|x|^2/8t}f)\\
&=\Delta  f-\tfrac{|x|^2}{16t^2}\,f+\tau\partial_t(\log\sigma)
f+\partial_tf+\tfrac{x}{2t}\cdot\nabla f+\tfrac{n}{4t}\,f\ .
\end{split}
\end{equation*}
Thus
\begin{equation}\label{zhang1}
\begin{split}
&\|t^{1/2}\sigma(t)^{-\tau}e^{-|x|^2/8t}(\Delta+\partial_t)h\|
_{L^2(\Omega\times(0,+\infty))}^2\\
&=\|t^{1/2}(\Delta f-\tfrac{|x|^2}{16t^2}f+\tau\partial_t(\log\sigma)f)
\|_{L^2(\Omega\times(0,+\infty))}^2\\
&+\|t^{1/2}(\partial_t f+\tfrac{x}{2t}\cdot\nabla f+\tfrac{n}{4t}f)\|
_{L^2(\Omega\times(0,+\infty))}^2\\
&+\int_{\Omega\times(0,+\infty)}2t
(\Delta f-\tfrac{|x|^2}{16t^2}\,f+\tau\partial_t(\log\sigma)f)
(\partial_tf+\tfrac{x}{2t}\cdot\nabla f+\tfrac{n}{4t}\,f)\,dxdt.
\end{split}
\end{equation}

Next, integrating by parts  we have the following two identities:
\begin{equation}\label{E: primeraformula}
\begin{split}
&\int_{\Omega\times(0,+\infty)}2t\partial_tf\Delta f\,dxdt\\
&=\int_{\Omega\times(0,+\infty)}-t\partial_t|\nabla f|^2+
2t\nabla\cdot(\partial_tf\nabla f)\,dxdt
=\int_{\Omega\times(0,+\infty)}|\nabla f|^2\,dxdt.
\end{split}
\end{equation}
\begin{equation}\label{E: segundaformual}
\begin{split}
&\int_{\Omega\times(0,+\infty)}2t\partial_tf\left(-\tfrac{|x|^2}
{16t^2}\,f
+\tau\partial_t(\log\sigma)f\right)\,dxdt\\
&=\int_{\Omega\times(0,+\infty)}-\tfrac{|x|^2}{16t}\,\partial_tf^2
+\tau t\partial_t(\log\sigma)\partial_tf^2\,dxdt\\
&=\int_{\Omega\times(0,+\infty)}-\tfrac{|x|^2}{16t^2}\,f^2-\tau\partial
_t\left(t\partial_t(\log\sigma)\right)f^2\,dxdt.
\end{split}
\end{equation}
The Rellich-Ne\v cas or Pohosaev identity
\begin{equation*}
\begin{split}
&\nabla\cdot\left[\left(x|\nabla f|^2\right)-2\left(x\cdot\nabla f\right)\nabla f\right]=
(n-2)|\nabla f|^2-2\left(x\cdot\nabla f\right)\Delta f\\
&=(n-2)|\nabla f|^2-2\left(x\cdot\nabla f+\tfrac{n}{2}\,f\right)\Delta f+nf\Delta f\\
&=\tfrac{n}{2}\,\Delta(f^2)-2|\nabla f|^2-2\left(x\cdot\nabla f+\tfrac{n}{2}\,f\right)\Delta f
\end{split}
\end{equation*}
gives the formula
\begin{equation*}
\left(x\cdot\nabla f+\tfrac{n}{2}\,f\right)\Delta f=
\nabla\cdot\left[\left(x\cdot\nabla f\right)\nabla f-\tfrac x2\,|\nabla f|^2\right]+\tfrac{n}{4}\,\Delta (f^2)-|\nabla f|^2.
\end{equation*}
Integrating the above identity in $\Omega$,
we get that for each $t>0$,
\begin{equation}\label{E: terceraformula}
\int_{\Omega}\left(x\cdot\nabla f+\tfrac{n}{4}\,f\right)\Delta f\,dx=\tfrac{1}{2}\int_{\partial\Omega}q\cdot\nu\left(
\tfrac{\partial f}{\partial\nu}\right)^2\,d\sigma
-\int_{\Omega}|\nabla f|^2\,dx.
\end{equation}

On the other hand,
\begin{equation}\label{E: cuartaformula}
\begin{split}
&\int_{\Omega\times(0,+\infty)}\left(x\cdot\nabla f+\tfrac{n}{2}\,f\right)\left(-\tfrac{|x|^2}{16t^2}\,f+\tau\partial_t
(\log\sigma)f\right)\,dxdt\\
&=\int_{\Omega\times(0,+\infty)}\tfrac{x}{2}\cdot\nabla\left(f^2\right)\left
(-\tfrac{|x|^2}{16t^2}+\tau\partial_t\left(\log\sigma\right)\right)\,dxdt\\
&+\tfrac{n}{2}\int_{\Omega\times(0,+\infty)}
\left(\tau\partial_t\left(\log\sigma\right)-\tfrac{|x|^2}{16t^2}\right)f^2\,dxdt\\
&=\int_{\Omega\times(0,+\infty)}\tfrac{|x|^2}{16t^2}\,f^2\,dxdt.
\end{split}
\end{equation}
Combining (\ref{zhang1}) and the identities \eqref{E: primeraformula}, \eqref{E: segundaformual}, \eqref{E: terceraformula} and \eqref{E: cuartaformula}, we get
\begin{equation*}
\begin{split}
&\|t^{1/2}\sigma(t)^{-\tau}e^{-|x|^2/8t}(\Delta+\partial_t)h
\|_{L^2(\Omega\times(0,+\infty))}^2\\
&\geq -\tau\int_{\Omega\times(0,+\infty)}\partial_t\left(t\partial_t\left
(\log\sigma\right)\right)f^2\,dxdt+\int_{\partial\Omega\times(0,+\infty)}
\tfrac{1}{2}
q\cdot\nu\left(\tfrac{\partial f}{\partial\nu}\right)^2\,d\sigma dt.
\end{split}
\end{equation*}
Choose then, $\sigma(t)=te^{-Mt},M>0$, $\partial_t(t\partial_t
(\log\sigma))=-M$, and it leads to the desired estimate.
\end{proof}
In Lemmas \ref{lemma2}, \ref{lemma3}, \ref{lemma4} and \ref{lemma6}, we assume that $\Omega$ is a Lipschitz domain with constants $(m,\varrho)$ and $0\in\partial\Omega$. In Lemmas \ref{lemma2} and \ref{lemma6} we also assume that $\Omega$ is near the origin, the region above the graph, $x_n=\phi(x')$, with $\phi$ as in  \eqref{E: condicionLipschitz} and \eqref{E: segunacondicionlipschitz}, so that $-\rho\, e$ is not in $\Omega$, when $0<\rho\le m\varrho$, $e=(0',1)$.
\begin{lemma}\label{lemma2} Let $\sigma(t)=te^{-t}$. Then, for $0<\rho\le m\varrho $,
\begin{equation*}
\begin{split}
&\sqrt{\tau}\|\sigma(t)^{-\tau}e^{-|x+\rho e|^2/8t}h\|_{
L^2(\Omega\times(0,+\infty))}\\
&\leq \|t^{1/2}\sigma(t)^{-\tau}e^{-|x+\rho e|^2/8t}\left(\Delta+\partial_{t}\right)h\|_{L^2(\Omega\times(0,+\infty))}\\
&+\| |q+\rho e|^{1/2}\sigma(t)^{-\tau}
e^{-|q+\rho e|^{2}/8t}\tfrac{\partial h}{\partial\nu}\|_{L^2(\partial\Omega\times(0,+\infty))},
\end{split}
\end{equation*}
when $\tau\geq1$ and $h\in C_{0}^\infty(\overline{\Omega}\times[0,+\infty))$ with
$h=0$ on $\partial\Omega\times[0,+\infty)$.
\end{lemma}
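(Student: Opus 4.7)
The plan is to reduce Lemma \ref{lemma2} to Lemma \ref{lemma1} by a translation of the spatial coordinates that moves $-\rho e$ to the origin. Concretely, I would set $y=x+\rho e$, let $\widetilde\Omega=\{x+\rho e:x\in\Omega\}$ and define $\widetilde h(y,t)=h(y-\rho e,t)$. Translation preserves the Laplacian and the time derivative, preserves surface measure on $\partial\widetilde\Omega$, and maps $|x+\rho e|$ to $|y|$ and the outer unit normal at $q+\rho e$ to the outer unit normal at $q$. Hence if Lemma \ref{lemma1} is applicable to $\widetilde h$ on $\widetilde\Omega$ with the choice $M=1$ (so that $\sigma(t)=te^{-t}$ and $\sqrt{\tau M}=\sqrt{\tau}$), pulling the resulting inequality back through $x=y-\rho e$ yields exactly the stated estimate.

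The only hypothesis of Lemma \ref{lemma1} that is not automatic after translation is $0\notin\widetilde\Omega$, which is the point where the geometric assumption $0<\rho\le m\varrho$ is used. The key step is therefore to observe that near the origin, $\Omega$ coincides with $\{(x',x_n)\in Z_{m,\varrho}:x_n>\phi(x')\}$ with $\phi(0')=0$ and the Lipschitz bound $|\phi(x')|\le m|x'|\le m\varrho$ on $B'_\varrho$. The candidate bad point $-\rho e=(0',-\rho)$ lies in $Z_{m,\varrho}$ because $\rho\le m\varrho<2m\varrho$, and at $x'=0'$ the defining condition would require $-\rho>\phi(0')=0$, which fails; by the openness of $Z_{m,\varrho}$ and the continuity of $\phi$, the same argument shows $-\rho e\notin\overline\Omega$, so $0\notin\overline{\widetilde\Omega}$.

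With this in hand, the proof reduces to applying Lemma \ref{lemma1} verbatim to $\widetilde h$ on $\widetilde\Omega$ with $M=1$, and then undoing the translation in every term: the weight $\sigma(t)^{-\tau}e^{-|y|^2/8t}$ becomes $\sigma(t)^{-\tau}e^{-|x+\rho e|^2/8t}$, the boundary weight $|q|^{1/2}\sigma(t)^{-\tau}e^{-|q|^2/8t}$ becomes $|q+\rho e|^{1/2}\sigma(t)^{-\tau}e^{-|q+\rho e|^2/8t}$, and $(\Delta+\partial_t)\widetilde h$ becomes $(\Delta+\partial_t)h$. There is no new identity to prove; the Rellich--Ne\v cas computation underlying Lemma \ref{lemma1} carries over because it is a pointwise identity invariant under the change of origin. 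The only mildly delicate point — and the only one where the hypothesis $\rho\le m\varrho$ is genuinely used — is the geometric verification of the previous paragraph, which I therefore expect to be the main (small) obstacle rather than any analytic difficulty.
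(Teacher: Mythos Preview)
Your proposal is correct and is exactly the paper's approach: the paper's proof is the single line ``It follows from Lemma~\ref{lemma1} by translation,'' and you have simply spelled out that translation together with the verification that $-\rho e\notin\overline\Omega$ (which the paper records in the paragraph preceding Lemma~\ref{lemma2}).
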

\begin{proof}
It follows from Lemma~\ref{lemma1}
by translation.
\end{proof}
\begin{lemma}\label{lemma3}
There is $N=N(n)$ such that
\begin{equation*}
\|\nabla u\|_{L^2(B_R\cap\Omega\times(0,T/2))}
\leq N\sqrt{R^{-2}+T^{-1}}\,\|u\|_{L^2(B_{4R/3}\cap
\Omega\times(0,T))},
\end{equation*}
when $u$ satisfies $\partial_tu+\Delta u=0$ in $B_{4R/3}\cap\Omega\times[0,T]$, $u=0$ on $\triangle_{4R/3}\times[0,T]$, for some $R,\, T>0$.
\end{lemma}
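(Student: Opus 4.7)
The plan is a standard Caccioppoli (reverse Poincar\'e) energy estimate, adapted to the lateral mixed boundary $\partial(B_{4R/3}\cap\Omega) = (\partial B_{4R/3}\cap\Omega)\cup\triangle_{4R/3}$. Because the equation is $\partial_tu+\Delta u=0$, the boundary contribution at $t=0$ turns out to have a favorable sign, so no time-reversal is needed; I would choose a product cutoff $\phi(x,t)=\eta(x)\xi(t)$ with $\eta\in C_c^\infty(B_{4R/3})$ satisfying $\eta\equiv 1$ on $B_R$ and $|\nabla\eta|\le C/R$, and $\xi\in C^\infty([0,T])$ satisfying $\xi\equiv 1$ on $[0,T/2]$, $\xi(T)=0$, and $|\xi'|\le C/T$.

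The main computation is to multiply the equation by $\phi^2u$ and integrate over $\Omega\times(0,T)$. The parabolic piece yields
\[
\int_0^T\!\!\int_\Omega \phi^2 u\,\partial_t u\,dxdt
= -\tfrac{1}{2}\!\int_\Omega \eta(x)^2 u(x,0)^2\,dx
-\int_0^T\!\!\int_\Omega \phi\,\phi_t\,u^2\,dxdt
\]
after using $\phi(\cdot,T)\equiv 0$; the elliptic piece, after integrating by parts in $x$, gives $-\int\!\int \phi^2|\nabla u|^2-2\int\!\int \phi u\,\nabla\phi\cdot\nabla u$, and the boundary term on $\partial(B_{4R/3}\cap\Omega)$ vanishes because on $\partial B_{4R/3}\cap\Omega$ we have $\eta=0$ while on $\triangle_{4R/3}$ we have $u=0$. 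Rearranging,
\[
\int_0^T\!\!\int_\Omega \phi^2|\nabla u|^2\,dxdt
+ \tfrac{1}{2}\!\int_\Omega \eta^2 u(\cdot,0)^2\,dx
= -\!\int_0^T\!\!\int_\Omega \phi\phi_t\,u^2\,dxdt
- 2\!\int_0^T\!\!\int_\Omega \phi u\,\nabla\phi\cdot\nabla u\,dxdt .
\]

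I would then apply Young's inequality $2|\phi u||\nabla\phi||\nabla u|\le \tfrac12\phi^2|\nabla u|^2+2|\nabla\phi|^2u^2$ to absorb the cross term, drop the non-negative boundary piece in $u(\cdot,0)^2$, and bound $|\phi\phi_t|\le C/T$ and $|\nabla\phi|^2\le C/R^2$ pointwise on $\mathrm{supp}\,\phi\subset B_{4R/3}\cap\Omega\times(0,T)$. This yields
\[
\int_0^T\!\!\int_\Omega \phi^2|\nabla u|^2\,dxdt
\le C\bigl(R^{-2}+T^{-1}\bigr)\,\|u\|_{L^2(B_{4R/3}\cap\Omega\times(0,T))}^2 .
\]
Since $\phi\equiv 1$ on $B_R\cap\Omega\times(0,T/2)$, the left-hand side dominates $\|\nabla u\|_{L^2(B_R\cap\Omega\times(0,T/2))}^2$, and taking square roots gives the lemma with $N=N(n)$.

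There is essentially no obstacle: the argument is elementary and the only point requiring care is the justification of the integrations by parts in $x$ on a Lipschitz domain, which is standard once $u$ is smoothed or one works with the weak formulation (the divergence theorem on Lipschitz domains together with the hypothesis $u|_{\triangle_{4R/3}}=0$ and $\eta|_{\partial B_{4R/3}}=0$ is enough). The choice of $\xi$ with $\xi(0)=1$, $\xi(T)=0$ is the key trick that leverages the sign in $\partial_t u+\Delta u=0$ so that the unwanted temporal boundary term is sign-definite and can be discarded.
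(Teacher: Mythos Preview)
Your proof is correct and follows essentially the same cutoff-energy strategy as the paper. The only cosmetic difference is that the paper tests the cutoff $\psi(x)\alpha(t)$ against the identity $(\Delta+\partial_t)(u^2)=2|\nabla u|^2$ and moves the operator onto the cutoff in one step, which sidesteps the cross term and the Young-inequality absorption; your version (multiplying the equation by $\phi^2u$) is the standard textbook variant and leads to the same bound.
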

\begin{proof}
Let $\psi\in C_0^\infty(\mathbb{R}^n)$ and $\alpha\in C^\infty(\mathbb{R})$ be such that $\psi=1$ in $B_R$, $\psi=0$ outside $B_{7R/6}$,$0\leq\psi\leq 1$, $|\Delta \psi|\leq NR^{-2}$;
$\alpha=1$ in $(-\infty,T/2]$, $\alpha=0$ in $[\tfrac{3T}4,+\infty)$, $0\leq\alpha\leq1$ and
$|\partial_t\alpha|\leq N/T$. Then
\begin{equation*}
\begin{split}
&\int_{B_R\cap\Omega\times(0,T/2))}2|\nabla u|^2\,dxdt
\leq \int_{\Omega\times(0,+\infty)}\psi(x)\alpha(t)\left(\Delta+\partial_t\right)(u^2)
\,dxdt\\
&=\int_{\Omega\times(0,+\infty)}(\Delta-\partial_t)
(\psi(x)\alpha(t))
u^2(x,t)\,dxdt-\int_{\Omega}\psi(x)u^2(x,0)\,dx\\
&\leq N(R^{-2}+T^{-1})\int_{B_{4R/3}\cap\Omega\times(0,T)}u^2\,dxdt.
\end{split}
\end{equation*}
\end{proof}
\begin{lemma}\label{lemma4}
Let $0<\rho\leq 1$ and $T>0$. Then, there is $N=N(n)$
such that
\begin{equation}\label{E: desigualdad de energilocal}
\|u(t)\|_{L^2(B_{3\rho}\cap\Omega)}\geq \tfrac{1}{2}
\|u(0)\|_{L^2(B_\rho\cap\Omega)},
\end{equation}
when
\begin{equation}\label{E: condicionquecumplet}
0<t\leq \min\left\{\tfrac{T}{2},\,\tfrac{\rho^2}{N\log^{+}\left(
\tfrac{N\|u\|_{L^2(B_4\cap\Omega\times[0,T])}}
{\rho\|u(0)\|_{L^2(B_\rho\cap\Omega)}}
\right)}\right\}
\end{equation}
and $u$ verifies $\partial_tu+\Delta u=0$ in $B_{4}\cap\Omega\times[0,T]$, $
u=0$ on $\triangle_4\times[0,T]$.
\end{lemma}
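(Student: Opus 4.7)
The inequality is a quantitative forward stability statement for a backward-caloric function with zero lateral data on $\triangle_4\times[0,T]$: the localized $L^2$-norm at time $t$ on $B_{3\rho}\cap\Omega$ is at least half of the time-$0$ norm on the smaller ball $B_\rho\cap\Omega$, provided $t$ is small enough in the logarithmic sense stated. My plan is to run a Gaussian-weight monotonicity argument, combined with a spatial cut-off to sidestep the lack of a global boundary vanishing, and with Lemma~\ref{lemma3} to convert the resulting commutator term into a bound in terms of the global $L^2$-norm of $u$.

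\emph{Set-up.} Take $x_0$ at the centre of $B_\rho$ and introduce the forward heat-kernel weight
\[
\Phi(x,s)=(4\pi(a+s))^{-n/2}\exp\!\left(-\tfrac{|x-x_0|^2}{4(a+s)}\right),\quad s\ge 0,
\]
with $a>0$ of order $\rho^2$ to be chosen later. Since $\partial_s\Phi=\Delta_x\Phi$, the functional $s\mapsto\int w^2\Phi\,dx$ satisfies a favourable identity for any $w$ solving $\partial_sw+\Delta w=g$. Pick a spatial cut-off $\chi\in C_0^\infty(B_4)$ with $\chi\equiv 1$ on $B_{7/2}$ and $|\nabla\chi|+|\Delta\chi|\le C$, and set $w=\chi u$; then $w$ extends by zero to $\mathbb{R}^n$, vanishes on $\partial(B_4\cap\Omega)$ (since $u=0$ on $\triangle_4\supset B_4\cap\partial\Omega$ and $\chi=0$ on $\partial B_4$), and satisfies $\partial_sw+\Delta w=g$ with $g=u\,\Delta\chi+2\nabla\chi\cdot\nabla u$ supported in $(B_4\setminus B_{7/2})\cap\Omega$.

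\emph{Main computation.} A direct integration by parts, using $\partial_s\Phi=\Delta\Phi$, gives
\[
\frac{d}{ds}\int w^2\Phi\,dx = 2\int \Phi|\nabla w|^2\,dx+2\int wg\,\Phi\,dx\ \ge\ 2\int wg\,\Phi\,dx,
\]
whence $\int w(t)^2\Phi(t,\cdot)\,dx\ge\int w(0)^2\Phi(0,\cdot)\,dx-2\int_0^t\int|wg|\Phi\,dxds$. On $B_\rho$, $|x-x_0|\le\rho$ and $\chi=1$, so the time-$0$ term is at least $(4\pi a)^{-n/2}e^{-\rho^2/(4a)}\|u(0)\|_{L^2(B_\rho\cap\Omega)}^2$. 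On $B_4\setminus B_{3\rho}$, $|x-x_0|\ge 2\rho$, giving
\[
\int w(t)^2\Phi(t)\,dx\le(4\pi(a+t))^{-n/2}\!\left[\|u(t)\|_{L^2(B_{3\rho}\cap\Omega)}^2+e^{-\rho^2/(a+t)}\|u(t)\|_{L^2(B_4\cap\Omega)}^2\right].
\]
On the support of $g$, $|x-x_0|\ge 5/2$, so $\Phi\le(4\pi(a+s))^{-n/2}e^{-c_0/a}$; Lemma~\ref{lemma3} lets me bound the $\nabla u$-term in $g$ in space-time and, together with a standard time-averaging bound on $\sup_{[0,T/2]}\|u(s)\|_{L^2(B_4\cap\Omega)}$, I obtain $\int_0^t\int|wg|\Phi\,dxds\le Ne^{-c_0/a}\|u\|_{L^2(B_4\cap\Omega\times[0,T])}^2$.

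\emph{Optimization and main obstacle.} Setting $a=\rho^2/C$ and $t\le a/2$ and combining the three estimates above yields, for constants $c,N$ depending only on $n$,
\[
\|u(t)\|_{L^2(B_{3\rho}\cap\Omega)}^2\ge c\,e^{-C/4}\|u(0)\|_{L^2(B_\rho\cap\Omega)}^2-N\,e^{-cC}\|u\|_{L^2(B_4\cap\Omega\times[0,T])}^2.
\]
Choosing $C\sim\log^+\!\bigl(N\|u\|_{L^2(B_4\cap\Omega\times[0,T])}/(\rho\|u(0)\|_{L^2(B_\rho\cap\Omega)})\bigr)$ forces the error to be at most $\tfrac12$ of the main term, and the resulting constraint $t\le\rho^2/(2C)$ is precisely the condition~\eqref{E: condicionquecumplet}. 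The delicate point is the interplay between the modest loss $e^{-C/4}$ coming from concentrating the Gaussian on $B_\rho$ and the much stronger gain $e^{-cC}$ on the annulus carrying the tail of $\Phi$ and the support of $g$: the strict inequality $c>1/4$ in the exponent (once $\chi$ is fixed and $\rho\le 1$) is what makes the scheme close, and is the quantitative source of the logarithmic factor in the admissible time.
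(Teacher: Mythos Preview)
Your approach has a genuine gap: the shifted Gaussian weight introduces an unrecoverable loss in the main term. With $a=\rho^2/C$, your lower bound for the time-$0$ integral carries the factor $e^{-\rho^2/(4a)}=e^{-C/4}$, so after absorbing the errors you obtain at best
\[
\|u(t)\|_{L^2(B_{3\rho}\cap\Omega)}^2\ \ge\ \tfrac{c}{2}\,e^{-C/4}\,\|u(0)\|_{L^2(B_\rho\cap\Omega)}^2.
\]
Since $C$ must grow like the logarithm in \eqref{E: condicionquecumplet}, this does \emph{not} yield the clean constant $\tfrac12$ in \eqref{E: desigualdad de energilocal}; it only gives a power-type lower bound of the form $\|u(t)\|\gtrsim(\rho\|u(0)\|/\|u\|)^{\gamma}\|u(0)\|$. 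Your closing remark that $c>1/4$ ensures the error is small \emph{relative to the main term}, but the main term itself has already been degraded by $e^{-C/4}$, and no choice of $C$ recovers both smallness of the error and a main-term coefficient bounded below.

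The paper avoids this loss by two choices you did not make. First, it uses the \emph{singular} heat kernel $G(x-y,t)=(4\pi t)^{-n/2}e^{-|x-y|^2/(4t)}$ with no shift $a$: then $H(t)=\int f^2(x,t)G(x-y,t)\,dx$ satisfies $H(0^+)=u(y,0)^2$ exactly for each $y\in B_\rho\cap\Omega$, and after integrating in $y$ and using $\int G(x-y,t)\,dy\le 1$ one gets $\|u(0)\|_{L^2(B_\rho\cap\Omega)}^2$ with coefficient $1$, not $e^{-C/4}$. Second, the cut-off is taken at scale $\rho$ (supported in $B_{3\rho}$, equal to $1$ on $B_{2\rho}$) rather than at scale $1$: this removes any tail term on $B_4\setminus B_{3\rho}$ and places the commutator on $B_{3\rho}\setminus B_{2\rho}$, where $|x-y|\ge\rho$ and $G\lesssim\rho^{-n}e^{-\rho^2/(8t)}$, producing exactly the logarithmic restriction on $t$. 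A smaller issue in your sketch is the appeal to a ``standard time-averaging bound on $\sup_{[0,T/2]}\|u(s)\|_{L^2(B_4\cap\Omega)}$'': this can be justified by a Caccioppoli estimate for the forward-caloric $v(s)=u(T-s)$, but it introduces an extraneous factor $1+T^{-1}$ that the paper's argument never needs.
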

\noindent Here,  $\log^{+}x=\max\{\log x,0\}$ and $\frac{1}{0}=+\infty$.
\begin{proof}
Let $\psi\in C_0^\infty(B_{3\rho})$ verify $\psi=1$ in $B_{2\rho}$, $0\leq\psi\leq 1$ and $\rho|\nabla\psi|+\rho^2|D^2\psi|\leq N$. Set $f(x,t)=u(x,t)\psi(x)$. Then
\begin{equation*}
|\Delta f+\partial_tf|\leq N\left(\rho^{-2}|u|+\rho^{-1}|\nabla u|\right)\chi_{_{B_{3\rho}\setminus B_{2\rho}}},\;\;\text{in}\;\;
\Omega\times[0,T].
\end{equation*}
Define
\begin{equation*}
H(t)=\int_{\Omega}f^2(x,t)G(x-y,t)\,dx,\;\;\text{when}\;\;t>0,\;\;
y\in
B_{\rho}\cap\Omega,
\end{equation*}
with $G(x,t)=(4\pi t)^{-\frac{n}{2}}e^{-\frac{|x|^2}{4t}}$. We have
\begin{equation*}
\begin{split}
&\tfrac{d}{dt}\,H(t)=2\int_{\Omega}\left(f(\Delta f+\partial_tf)+|\nabla f|^2\right)G(x-y,t)\,dx\\
&\geq -N\int_{(B_{3\rho}\setminus B_{2\rho})\cap\Omega}
\left(\rho^{-2}|u|^2+|\nabla u|^2\right)G(x-y,t)\,dx\\
&\geq -Nt^{-n/2}e^{-\rho^2/4t}\int_{B_{3\rho}\cap\Omega}
\rho^{-2}|u|^2+|\nabla u|^2\,dx\\
&\geq -N\rho^{-n}e^{-\rho^2/8t}\int_{B_{3\rho}\cap\Omega}
\rho^{-2}|u|^2+|\nabla u|^2\,dx,
\end{split}
\end{equation*}
for $0<t<T$. Integrating the above inequality in $(0,t)$,
\begin{equation*}
\int_{\Omega}f^2(x,t)G(x-y,t)\,dx-u^2(y,0)
\geq -N\rho^{-n}e^{-\rho^2/8t}\!\!\!\int_{B_{3\rho}\cap\Omega}
\int_{0}^{t}\rho^{-2}|u|^2+|\nabla u|^2\,d\tau dx
\end{equation*}
This, along with Lemma \ref{lemma3}
with $R=3\rho$, $\frac{T}{2}=t$, shows that
\begin{equation*}
\begin{split}
&\int_{B_{3\rho}\cap\Omega}u^2(x,t)G(x-y,t)\,dx-u^2(y,0)\\
&\geq -N\rho^{-n-2}(1+\rho^2/t)e^{-\rho^2/8t}
\int_{B_{4\rho}\cap\Omega}\int_0^{2t}u^2(x,\tau)\,dxd\tau\\
&\geq -N\rho^{-n-2}e^{-\rho^2/16t}
\int_{B_{4\rho}\cap\Omega}\int_0^{2t}u^2(x,\tau)\,dxd\tau,
\end{split}
\end{equation*}
when $y\in B_{\rho}\cap\Omega$, $0<2t\leq T$. Integrating
the above inequality for $y\in B_{\rho}\cap\Omega$ and
recalling that
\begin{equation*}
\int_{\mathbb{R}^n}G(x-y,t)\,dy=1,\;\;\text{for all}\;\;x\in
\mathbb{R}^n,\ t>0,
\end{equation*}
we get
\begin{equation*}
\int_{B_{3\rho}\cap\Omega}u^2(x,t)\,dx\geq
\int_{B_{\rho}\cap\Omega}u^2(y,0)\,dy
-N\rho^{-2}e^{-\rho^2/16t}
\|u\|^2_{L^2(B_{4\rho}\cap\Omega\times[0,2t])},
\end{equation*}
when $0<2t\leq T$.
The last inequality shows that \eqref{E: desigualdad de energilocal} holds
when $t$ verifies \eqref{E: condicionquecumplet} with  $N=N(n)$.
\end{proof}
\begin{lemma}\label{lemma6} There
are $\rho\in(0,1)$, $N=N(m,\varrho)\ge 1$ and  $\theta=\theta(m,\varrho)$ with $\theta\in(0,1)$, such that
\begin{equation}\label{E: vesselladesigualdad}
\|u(0)\|_{L^2(B_{\rho}\cap\Omega)}
\leq \left(Ne^{N/T}\|\tfrac{\partial u}{\partial\nu}
\|_{L^2(\triangle_8\times[0,T])}
\right)^{\theta}
\|u\|_{L^2(B_8\cap\Omega\times[0,T])}^{1-\theta},
\end{equation}
when $u$ verifies $\partial_tu+\Delta u=0$ in $B_{8}\cap\Omega\times[0,T]$, $u=0$ on $\triangle_{8}\times[0,T]$
\end{lemma}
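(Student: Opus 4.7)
My plan is to combine the Carleman inequality of Lemma~\ref{lemma2}, with pole placed at $-\rho e$ for a suitably small $\rho=\rho(m,\varrho)\in(0,1)$ just below the boundary at $0$, with the non-degeneracy lower bound of Lemma~\ref{lemma4} in order to pass from a time-integrated interior estimate to one at $t=0$.

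Concretely, I would pick smooth cutoffs $\psi\in C_0^\infty(B_7)$ with $\psi\equiv 1$ on $B_6$ and $\alpha\in C^\infty([0,\infty))$ with $\alpha\equiv 1$ on $[0,T/2]$, $\alpha\equiv 0$ on $[3T/4,\infty)$, and apply Lemma~\ref{lemma2} to $h=u\psi\alpha$. Since $\Delta u+\partial_t u=0$ in $B_8\cap\Omega\times[0,T]$ and $u=0$ on $\triangle_8\times[0,T]$, the commutator $(\Delta+\partial_t)(u\psi\alpha)=u\alpha\Delta\psi+2\alpha\nabla\psi\cdot\nabla u+u\psi\alpha'$ is supported on the bad set $\mathcal B=[(B_7\setminus B_6)\cap\Omega\times[0,3T/4]]\cup[B_7\cap\Omega\times[T/2,3T/4]]$. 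The gradient term $\alpha\nabla\psi\cdot\nabla u$ on the spatial annulus is absorbed via a Caccioppoli-type bound in the spirit of Lemma~\ref{lemma3}, so that the bulk right-hand side of the Carleman inequality is controlled by the supremum of $t^{1/2}\sigma(t)^{-\tau}e^{-|x+\rho e|^2/8t}$ over $\mathcal B$ times $\|u\|_{L^2(B_8\cap\Omega\times[0,T])}$, while the boundary contribution is bounded by the supremum of the same weight (for $t\ge t_*$) times $\|\partial u/\partial\nu\|_{L^2(\triangle_8\times[0,T])}$.

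Next, restricting the Carleman left-hand side to the target set $B_{3\rho}\cap\Omega\times[t_*,2t_*]$, where $\psi\alpha\equiv 1$, $|x+\rho e|\le 4\rho$, and $t_*$ is chosen proportional to $T$ with a constant depending only on $(m,\varrho)$, and dividing through by the target weight lower bound $\sigma(2t_*)^{-\tau}e^{-2\rho^2/t_*}$ produces, after the standard estimate for the maxima of $t^{1/2-\tau}e^{-c/t}$, an inequality of the form
\[\|u\|_{L^2(B_{3\rho}\cap\Omega\times[t_*,2t_*])}\le C\,e^{C\tau}\|\tfrac{\partial u}{\partial\nu}\|_{L^2(\triangle_8\times[0,T])}+C\,e^{C/T}e^{-\tau/C}\|u\|_{L^2(B_8\cap\Omega\times[0,T])},\]
valid for all $\tau\ge 1$, with $C=C(m,\varrho)$. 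Optimizing in $\tau$, as in the passage from \eqref{E: quebienfunciona2} to \eqref{E: hayqueverqiebiensalestovayajuergamaravi} in Section~\ref{S:2}, gives the interpolation with exponent $\theta=\theta(m,\varrho)\in(0,1)$. Lemma~\ref{lemma4} then replaces the target norm by $\sqrt{t_*}\cdot\tfrac12\|u(0)\|_{L^2(B_{\rho}\cap\Omega)}$ via a standard dichotomy: either its admissibility condition \eqref{E: condicionquecumplet} is met for the chosen $t_*$, in which case \eqref{E: vesselladesigualdad} follows at once; or $\|u(0)\|_{L^2(B_{\rho}\cap\Omega)}$ is so small relative to $\|u\|_{L^2(B_4\cap\Omega\times[0,T])}$ that \eqref{E: vesselladesigualdad} is trivially true.

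The main technical obstacle I foresee is the joint balancing of the Carleman weight $\sigma(t)^{-\tau}e^{-|x+\rho e|^2/8t}$ between the target ball near $x=0$ and the two bad pieces of $\mathcal B$: the exponential gain from $|x+\rho e|\ge 5$ on the annulus must dominate the blow-up of $\sigma(t)^{-\tau}$ as $t\to 0^+$ in the bulk term, and the $\sigma(t)^{-\tau}$ suppression on the temporal strip $[T/2,3T/4]$ must be paid for by a factor bounded polynomially in $1/T$ only. Once the weight is tuned, forcing $t_*$, $\rho$, and the geometric separation between $\mathcal B$ and $-\rho e$ to satisfy sharp inequalities in terms of $(m,\varrho)$, the remaining steps (Caccioppoli absorption, $\tau$-optimization, and dichotomy) are routine.
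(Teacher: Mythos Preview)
Your overall strategy---Carleman inequality with pole at $-\rho e$, spatial and temporal cutoffs, Caccioppoli on the commutator, then Lemma~\ref{lemma4} to pass to $t=0$---is indeed the paper's. However, the inequality you claim,
\[
\|u\|_{L^2(B_{3\rho}\cap\Omega\times[t_*,2t_*])}\le C\,e^{C\tau}\|\tfrac{\partial u}{\partial\nu}\|_{L^2(\triangle_8\times[0,T])}+C\,e^{C/T}e^{-\tau/C}\|u\|_{L^2(B_8\cap\Omega\times[0,T])},
\]
cannot hold for all $\tau\ge 1$ with your choices of cutoffs and target interval. The problem is the spatial annulus piece of $\mathcal B$: it runs over all $t\in[0,3T/4]$, and there the weight $t^{1/2}\sigma(t)^{-\tau}e^{-|x+\rho e|^2/8t}$ (with $|x+\rho e|\ge 5$) attains its maximum at $t\sim 25/(8\tau)$, of order $\bigl(8\tau/25\bigr)^{\tau}$. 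Your target weight, on the other hand, is $\sigma(2t_*)^{-\tau}\sim (cT)^{-\tau}$ for $t_*=cT$. After dividing, the bulk contribution therefore carries a factor $\sim(c'T\tau)^{\tau}$, which blows up super-exponentially as $\tau\to\infty$ and is not bounded by $e^{-\tau/C}$ for any $C$. The boundary term has the same defect, since for $q$ near $0$ one only has $|q+\rho e|\gtrsim\rho$ and the sup of $\sigma(t)^{-\tau}e^{-\beta^2\rho^2/(8t)}$ over $t>0$ again picks up a $\tau^{\tau}$ factor.

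The fix, which you correctly anticipate as the main obstacle but do not carry out, is to tie \emph{all} temporal scales to the Carleman parameter $\tau$ rather than to $T$. In the paper the time cutoff is $\alpha(\tau t)$ (so the bad set lives in $t\le 2/\tau$), and the target interval is $[\rho^2/(2\tau),\rho^2/\tau]$. With those choices both the bad-set weights and the target weight scale like $\tau^{\tau}$, and these factors cancel exactly, leaving ratios of the form $(e^{7}\rho^2)^{\tau}$ and $N^{\tau}$ which are genuinely exponential in $\tau$; one then fixes $\rho$ so that $e^{7}\rho^2=e^{-1}$. The price is that Lemma~\ref{lemma4} must be applied on the $\tau$-dependent window $[\rho^2/(2\tau),\rho^2/\tau]$, so its admissibility condition \eqref{E: condicionquecumplet} becomes a constraint on $\tau$ itself; the paper handles this by choosing $\tau=4+4/T+N\log^{+}\bigl(N\|u\|_{L^2(B_8\cap\Omega\times[0,T])}/(\rho\|u(0)\|_{L^2(B_\rho\cap\Omega)})\bigr)$ directly, rather than optimizing freely over $\tau$ and then invoking a dichotomy.
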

The readers can find a similar interpolation inequality to \eqref{E: vesselladesigualdad} in \cite[Theorem 4.6]{Mdicristorondivessella} though not with optimal $T$ dependency, so that its application to observability boundary inequalities does not imply  optimal cost constants.
\begin{proof}
Let $\psi\in C_0^\infty(\mathbb{R}^n)$ and $\alpha\in C^\infty(\mathbb{R})$ be such that $\psi=1$ in $B_4$, $\psi=0$ outside $B_{5}$, $0\leq\psi\leq1$, $|\nabla \psi|+|D^2\psi|\leq N$; $
\alpha=1$ in $(-\infty,1]$, $\alpha=0$ in $[2,+\infty)$, $0\leq \alpha(t)\leq 1$ and $|\partial_t\alpha(t)|\leq 1$.
Let $\tau$ is a positive number verifying $\tfrac{4}{\tau}\leq \min\{T,1\}$. Take $h(x,t)=u(x,t)\psi(x+\rho e)\alpha(\tau t)$
in Lemma \ref{lemma2}. Here, $\rho\in(0,1)$ will be fixed later. Then,
\begin{equation*}
|\Delta h+\partial_th|\leq N\left[(1+\tau)|u|+|\nabla u|\right]\chi_{E}(x,t),
\end{equation*}
with
\begin{equation*}
E=B_5(-\rho e)\cap\Omega\times[0,\tfrac{2}{\tau}]\setminus
B_4(-\rho e)\cap\Omega\times[0,\tfrac{1}{\tau}].
\end{equation*}
Since $\frac{t}{e}\leq \sigma(t)\leq t$ in $(0,1)$, where $\sigma$ is as in Lemma \ref{lemma2}, we have
\begin{equation*}
t^{1/2}\sigma(t)^{-\tau}e^{-|x+\rho e|^2/8t}\leq
e^\tau t^{1/2-\tau}e^{-|x+\rho e|^2/8t}\\
\leq e^\tau\tau^{\tau-1/2},\ \text{when}\ (x,t)\in E.
\end{equation*}
The later inequality shows that
\begin{equation}\label{zhang3}
\begin{split}
&\|t^{1/2}\sigma(t)^{-\tau}(\Delta h+\partial_th)e^
{-|x+\rho e|^2/8t}\|_{L^2(\Omega\times(0,+\infty))}\\
&\leq N e^\tau\tau^{\tau-1/2}\left\|(1+\tau)|u|+|\nabla u|\right\|_{
L^2(B_6\cap\Omega\times[0,\frac{2}{\tau}])}.
\end{split}
\end{equation}
From Lemma \ref{lemma3} with $R=6$ and $T=\frac{2}{\tau}$, we get
\begin{equation}\label{zhang4}
\left\|(1+\tau)|u|+|\nabla u|\right\|_{L^2(B_6\cap\Omega\times[0,\frac{2}{\tau}])}
\leq N\tau \|u\|_{L^2(B_8\cap\Omega\times[0,T])}.
\end{equation}
From \eqref{zhang3} and \eqref{zhang4}, it follows that
\begin{equation}\label{zhang5}
\begin{split}
&\|t^{1/2}\sigma(t)^{-\tau}(\Delta h+\partial_th)e^
{-|x+\rho e|^2/8t}\|_{L^2(\Omega\times(0,+\infty))}\\
&\leq e^\tau\tau^{\tau+\frac{1}{2}}N
\|u\|_{L^2(B_8\cap\Omega\times[0,T])},\;\;N=N(n).
\end{split}
\end{equation}
Next, because $\partial\Omega$
is Lipschitz, there is  a positive number $\beta=\beta(m,\varrho)$ such that
$\Omega\cap B_{2\beta\rho}(-\rho e)=\emptyset$. Then
\begin{equation*}
\begin{split}
&|q+\rho e|^{1/2}\sigma(t)^{-\tau}e^
{-|q+\rho e|^2/8t}\leq \sqrt{8}\,e^\tau t^{-\tau}e^{-\frac{\beta^2\rho^2}{2t}}\\
&\leq \sqrt{8}\left(\tfrac{\beta^2\rho^2}{2}\right)^{-\tau}\tau^\tau,
\;\;\text{on}\;\;B_5(-\rho e)\cap\partial\Omega\times(0,1).
\end{split}
\end{equation*}
Thus
\begin{multline}\label{zhang6}
\| |q+\rho e|^{1/2}\sigma(t)^{-\tau}e^
{-|q+\rho e|^2/8t}\tfrac{\partial h}{\partial\nu}\|_{L^
2(\partial\Omega\times(0,+\infty))}\\
\leq \sqrt{8}\left(\tfrac{2}{\beta^2\rho^2}\right)^{\tau}\tau^\tau
\|\tfrac{\partial u}{\partial\nu}\|_{L^2(\triangle_8\times(0,T))}.
\end{multline}
From \eqref{zhang5} and \eqref{zhang6} and Lemma \ref{lemma2}, it follows that
\begin{equation}\label{zhang7}
\begin{split}
&\tau^{1/2}\|t^{-\tau}e^
{-|x+\rho e|^2/8t}u\|_{L^2(B_4(-\rho e)\cap\Omega\times[0,\frac{1}{\tau}])}\\
&\leq N\left[e^\tau\tau^{\tau+1/2}\|u\|_{L^2(B_8\cap\Omega\times
(0,T))}
+\left(\tfrac{2}{\beta^2\rho^2}\right)^{\tau}\tau^\tau
\|\tfrac{\partial u}{\partial\nu}\|_{L^2(\triangle_8\times(0,T))}\right],
\end{split}
\end{equation}
for $\tfrac{4}{\tau}\leq\min\{1,T \}$ and with $N=N(m,\varrho,n)$. Because
$$
B_{3\rho}\cap\Omega\times[\tfrac{\rho^2}{2\tau},
\tfrac{\rho^2}{\tau}]\subset B_{4\rho}(-\rho e)
\cap\Omega\times[\tfrac{\rho^2}{2\tau},
\tfrac{\rho^2}{\tau}]\subset B_{4\rho}(-\rho e)
\cap\Omega\times[0,\tfrac{1}{\tau}],
$$
we have
$$\tau^{1/2}t^{-\tau}e^
{-|x+\rho e|^2/8t}\geq \tau^{\tau+1/2}\rho^{-2\tau}e^{-4\tau},\ \text{for}\ (x,t)\in B_{4\rho}(-\rho e)
\cap\Omega\times[\tfrac{\rho^2}{2\tau},
\tfrac{\rho^2}{\tau}]. $$
Hence, the left hand side of \eqref{zhang7} verifies
\begin{equation}\label{zhang8}
\tau^{1/2}\|t^{-\tau}e^
{-|x+\rho e|^2/8t}u\|_{L^2(B_4(-\rho e)\cap\Omega\times[0,\frac{1}{\tau}])}\\
\geq \tau^{\tau+1/2}\rho^{-2\tau}e^{-4\tau}
\|u\|_{L^2(B_{3\rho}\cap\Omega\times[\frac{\rho^2}{2\tau},
\frac{\rho^2}{\tau}])}.
\end{equation}
Also, from Lemma \ref{lemma4},
\begin{equation*}\label{zhang9}
\|u\|_{L^2(B_{3\rho}\cap\Omega\times[\tfrac{\rho^2}{2\tau},
\tfrac{\rho^2}{\tau}])}
\geq \tfrac{\rho}{\sqrt{8}}\,\tau^{-1/2}\|u(0)\|_{L^2(B_\rho\cap\Omega)},
\end{equation*}
when
\begin{equation*}
\tfrac{1}{\tau}\leq
\min\left\{T,\ \tfrac{1}{N\log^{+}\left(
\frac{N\|u\|_{L^2(B_8\cap\Omega\times[0,T])}}
{\rho\|u(0)\|_{L^2(B_\rho\cap\Omega)}}
\right)}\right\}.
\end{equation*}
This, along with \eqref{zhang7} and \eqref{zhang8},
shows that
\begin{multline*}
\tau^\tau\rho^{-2\tau}e^{-4\tau}\rho\|u(0)\|
_{L^2(B_\rho\cap\Omega)}\\
\leq N\left[e^\tau\tau^{\tau+1/2}\|u\|_{L^2(B_8\cap\Omega\times[0,T])}+\left(\tfrac{2}{\beta^2\rho^2}\right)^{\tau}\tau^\tau
\|\tfrac{\partial u}{\partial\nu}\|_{L^2(\triangle_8\times(0,T))}\right],
\end{multline*}
when
\begin{equation}\label{zhang11}
\tfrac{1}{\tau}\leq\min\left\{\tfrac{1}{4},\;\;\tfrac{T}{4},\ \tfrac{1}
{N\log^{+}\left(
\frac{N\|u\|_{L^2(B_8\cap\Omega\times[0,T])}}
{\rho\|u(0)\|_{L^2(B_\rho\cap\Omega)}}
\right)}\right\}.
\end{equation}
Thus, there is $N=N(m,\varrho,n)$ such that
\begin{equation}\label{zhang10}
\rho\|u(0)\|
_{L^2(B_\rho\cap\Omega)}
\leq \left(e^7\rho^2\right)^\tau\tfrac{N}{2}
\|u\|
_{L^2(B_8\cap\Omega\times[0,T])}
+N^\tau\|\tfrac{\partial u}{\partial\nu}\|_{L^2(\triangle_8\times[0,T])},
\end{equation}
when $\tau$ satisfies (\ref{zhang11}). Fix now $\rho$ so that $e^7\rho^2=e^{-1}$
and choose
\begin{equation}\label{E: elecci—nde}
\tau=4+\tfrac{4}{T}+N\log^{+}\left(
\tfrac{N\|u\|_{L^2(B_8\cap\Omega\times[0,T])}}
{\rho\|u(0)\|_{L^2(B_\rho\cap\Omega)}}\right).
\end{equation}
Clearly, $\tau$ verifies (\ref{zhang11}). Moreover,
\begin{equation*}
\begin{split}
&\left(e^7\rho^2\right)^\tau\tfrac{N}{2}
\|u\|
_{L^2(B_8\cap\Omega\times[0,T])}
=e^{-\tau}\left(\frac{\tfrac{N}{2}
\|u\|
_{L^2(B_8\cap\Omega\times[0,T])}}{\rho\|u(0)\|_{L^2(B_\rho\cap
\Omega)}}\right)\left(\rho\|u(0)\|_{L^2(B_\rho\cap
\Omega)}\right)\\
&\leq \frac{\rho}{2}\,\|u(0)\|_{L^2(B_\rho\cap
\Omega)}.
\end{split}
\end{equation*}
This, together with  (\ref{zhang10}) shows that
\begin{equation*}
\rho\|u(0)\|
_{L^2(B_\rho\cap\Omega)}
\leq
2N^\tau\|\tfrac{\partial u}{\partial\nu}\|_{L^2(\triangle_8\times[0,T])}.
\end{equation*}
From \eqref{E: elecci—nde}, it follows that
\begin{equation*}
N^\tau=e^{\tau\log N}=e^{(4+\frac{4}{T})\log N}\left(\tfrac{N\|u\|_{L^2(B_8\cap\Omega\times[0,T])}}
{\rho\|u(0)\|_{L^2(B_\rho\cap\Omega)}}\right)^{N\log N},
\end{equation*}
and then we get that
\begin{equation*}
\begin{split}
&\left[\rho\|u(0)\|_{L^2(B_\rho\cap\Omega)}\right]^{1+N\log N}\\
&\leq e^{(4+\frac{4}{T})\log N}\left(N\|u\|_{L^2(B_8\cap\Omega\times[0,T])}\right)^{N\log N}
\|\tfrac{\partial u}{\partial\nu}\|_{L^2(\triangle_8\times[0,T])}\\
&\leq \left(Ne^{\frac{N}{T}}\|u\|_{L^2(B_8\cap\Omega\times[0,T])}\right)^{N\log N}\|\tfrac{\partial u}{\partial\nu}\|_{L^2(\triangle_8\times[0,T])}.
\end{split}
\end{equation*}
In particular,
\begin{equation*}\label{zhang13}
\begin{split}
&\|u(0)\|_{L^2(B_\rho\cap\Omega)}\\
&\leq \left(Ne^{N/T}\|\tfrac{\partial u}{\partial\nu}\|_{L^2(\triangle_8\times[0,T])}\right)^\theta
\|u\|_{L^2(B_8\cap\Omega\times[0,T])}^{1-\theta},\ \theta=\frac{1}{1+N\log N}.
\end{split}
\end{equation*}
\end{proof}

By translation and rescaling, Lemma~\ref{lemma6}
is equivalent to the following:
\begin{lemma}\label{lemma7}
Let $\Omega$ be a Lipschitz domain with constants $(m,\varrho)$, $0<R\leq 1$, $q\in\partial\Omega$, and $T>0$. Then, there are $\rho\in(0,1]$, $N=N(m,\varrho,n)\geq 1$ and $\theta=\theta(m,\varrho,n)$, with $\theta\in(0,1)$, such that
\begin{multline*}
\|u(0)\|_{L^2(B_{\rho R}(q)\cap\Omega)}\\
\leq \left(Ne^{NR^2/T}R^{\frac 12} \|\tfrac{\partial u}{\partial\nu}\|_{L^2(\triangle_{8R}(q)\times[0,T])}\right)^\theta
\,\left(R^{-1}\|u\|_{L^2(B_{8R}(q)\cap\Omega\times[0,T])}\right)^{1-\theta},
\end{multline*}
when $u$ verifies $\partial_tu+\Delta u=0$ in $B_{8R}(q)\cap\Omega\times[0,T]$ and $
u=0$ on $\triangle_{8R}(q)\times[0,T]$.
\end{lemma}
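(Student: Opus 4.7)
The plan is to reduce the statement to Lemma \ref{lemma6} by a translation to the origin followed by a parabolic rescaling, then to carry the Jacobian factors through each norm appearing in Lemma \ref{lemma6}. No new analysis is required; the work is purely bookkeeping of the scaling weights and of the Lipschitz constants of the rescaled domain.

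First I would translate so that $q=0$ and then set
\begin{equation*}
v(y,s)=u(Ry,R^2s),\qquad \Omega_R=R^{-1}\Omega,
\end{equation*}
for $y\in B_{8}\cap\Omega_R$ and $s\in[0,T/R^2]$. A direct computation gives $\partial_sv+\Delta_yv=R^2(\partial_tu+\Delta u)=0$ on $B_8\cap\Omega_R\times[0,T/R^2]$ and $v=0$ on $\triangle_8\cap\partial\Omega_R\times[0,T/R^2]$. Because $R\in(0,1]$, the dilated domain $\Omega_R$ is a Lipschitz domain with the same slope constant $m$ and with size parameter $\varrho/R\ge \varrho$, so Lemma \ref{lemma6} applies to $v$ with the constants $(m,\varrho)$ and with time horizon $T/R^2$, yielding
\begin{equation*}
\|v(0)\|_{L^{2}(B_{\rho}\cap\Omega_R)}\le\bigl(Ne^{NR^2/T}\|\tfrac{\partial v}{\partial\nu}\|_{L^{2}(\triangle_{8}\cap\partial\Omega_R\times[0,T/R^2])}\bigr)^{\theta}\|v\|_{L^{2}(B_{8}\cap\Omega_R\times[0,T/R^2])}^{1-\theta}.
\end{equation*}

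Next I would undo the change of variables. The relations $x=Ry$, $t=R^2s$ give the standard scaling identities
\begin{equation*}
\begin{split}
&\|v(0)\|_{L^{2}(B_{\rho}\cap\Omega_R)}=R^{-n/2}\|u(0)\|_{L^{2}(B_{\rho R}(q)\cap\Omega)},\\
&\|v\|_{L^{2}(B_{8}\cap\Omega_R\times[0,T/R^2])}=R^{-(n+2)/2}\|u\|_{L^{2}(B_{8R}(q)\cap\Omega\times[0,T])},\\
&\|\tfrac{\partial v}{\partial\nu}\|_{L^{2}(\triangle_{8}\cap\partial\Omega_R\times[0,T/R^2])}=R^{(1-n)/2}\|\tfrac{\partial u}{\partial\nu}\|_{L^{2}(\triangle_{8R}(q)\times[0,T])},
\end{split}
\end{equation*}
where the last one uses that $\partial_\nu v(y,s)=R\,\partial_\nu u(Ry,R^2s)$, together with the $R^{n-1}$ factor in surface measure and $R^2$ in the time variable. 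Plugging these into the rescaled Lemma \ref{lemma6} estimate and cancelling the net factor $R^{n/2}$ on the left, the prefactors on the right assemble into $R^{\theta/2}$ on the normal-derivative term and $R^{-(1-\theta)}$ on the interior term, which is exactly the form stated in Lemma \ref{lemma7}.

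The only mildly delicate point, and the one I would want to double-check carefully, is that the rescaled domain $\Omega_R$ still satisfies the qualitative hypotheses that implicitly enter the proof of Lemma \ref{lemma6}, namely that the Lipschitz graph description of $\partial\Omega_R$ near $0$ remains valid on a neighborhood containing $B_{8}$ and that the parameter $\rho\in(0,1]$ fixed in that proof still verifies $\rho\le m\varrho$. Both are automatic since dilating by $R^{-1}\ge 1$ only enlarges the cylinder $Z_{m,\varrho/R}$ in which the graph representation holds, and the slope constant $m$ is preserved. Once this is observed the algebraic bookkeeping of the $R$-powers is the entire content of the proof.
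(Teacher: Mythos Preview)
Your proof is correct and follows exactly the route the paper indicates: the paper simply states that Lemma~\ref{lemma7} is obtained from Lemma~\ref{lemma6} ``by translation and rescaling'' without writing out the details, and you have carried out precisely that rescaling with the correct Jacobian bookkeeping (your net power $R^{\theta/2-(1-\theta)}$ matches the statement). Your observation that the dilated domain $\Omega_R=R^{-1}\Omega$ retains the slope constant $m$ and has size parameter $\varrho/R\ge\varrho$, so that Lemma~\ref{lemma6} applies with constants depending only on $(m,\varrho,n)$, is exactly the point that makes the reduction legitimate.
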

\begin{proof} [Proof of Theorem \ref{interpolation}]
It suffices to prove Theorem \ref{interpolation} when $t_2=T$, $t_1=0$ and $q=0$. From Theorem \ref{T: 2global 2-sphere 1-cylinder} or \eqref{E: dos esferas un cilindros desigualdad2}, there are $N=N(\Omega, R)$ and $0<\theta_1<1$, $\theta_1=\theta_1(\Omega, R)$ such that
\begin{equation}\label{zhang18}
\|e^{T\Delta}f\|_{L^2(\Omega)}
\leq \left(Ne^{N/T}\|e^{T\Delta}f\|_{L^2(B_{\frac{\rho R}{20}}(0',\frac{\rho R}{2}))}\right)^{\theta_1}
\|f\|_{L^2(\Omega)}^{1-\theta_1},\;\;f\in L^2(\Omega).
\end{equation}
with $\rho\in(0,1)$ as in Lemma~\ref{lemma7}. On the other hand, it follows from Lemma \ref{lemma7} that
\begin{equation*}
\begin{split}
&\|e^{T\Delta}f\|_{L^2(B_{\rho R}\cap\Omega)}\\
&\leq \left(Ne^{N/T}\|\tfrac{\partial}{\partial\nu}
\,e^{(T-t)\Delta}f\|
_{L^2(B_{8R}\cap\partial\Omega\times[0,T])}\right)^{\theta_2}\|e^{(T-t)\Delta}f\|
_{L^2(B_{8R}\cap\Omega\times[0,T])}^{1-\theta_2}.
\end{split}
\end{equation*}
In particular,
\begin{equation*}
\|e^{T\Delta}f\|_{L^2(B_{\rho R}\cap\Omega)}
\leq \left(Ne^{N/T}\|\tfrac{\partial}{\partial\nu}\,e^{t\Delta}f
\|
_{L^2(\Delta_{8R}\times[0,T])}\right)^{\theta_2}\|f\|
_{L^2(\Omega)}^{1-\theta_2}.
\end{equation*}
This, together with (\ref{zhang18}) and
$B_{\frac{\rho R}{20}}(0',\frac{\rho R}{2})\subset B_{\rho R}
\cap\Omega$, leads to the desired estimate.
\end{proof}
\begin{remark}\label{YUANYUAN1}
{\it It follows  from Theorem \ref{interpolation} that there are constants $N=N(\Omega,R,n)$ and $\theta=\theta(\Omega,n)$, $\theta\in(0,1)$, such that
\begin{equation}\label{YUANYUAN2}
\|e^{T\Delta}f\|_{L^2(\Omega)}
\leq \left(Ne^{N/[\left(\epsilon_2-\epsilon_1\right)T]}
\|\tfrac{\partial}{\partial\nu}\,e^{t\Delta}f\|
_{L^2(\Delta_{R}(q)\times[\epsilon_1T,\epsilon_2T])}\right)^{\theta}
\|f\|
_{L^2(\Omega)}^{1-\theta},
\end{equation}
when $f\in L^2(\Omega)$ and $0<\epsilon_1<\epsilon_2<1$.}
\end{remark}

Lemma \ref{L: algoque tambieninfluye} below is a rescaled and translated version of \cite[Lemma 2]{ApraizEscauriaza1}.

\begin{lemma}\label{L: algoque tambieninfluye}
Let $f$ be analytic in $[a,a+L]$ with $a\in\R$ and $L>0$, $F$ be a measurable set in $[a,a+L]$. Assume that there are positive constants $M$ and $\rho$ such that
\begin{equation}\label{E: quantitativo}
|f^{(k)}(x)|\le M k!(2\rho L)^{-k},\ \text{for}\ k\ge 0,\ x\in [a,a+L].
\end{equation}
Then, there are $N=N(\rho, |F|/L)$ and $\gamma=\gamma(\rho, |F|/L)$ with $\gamma\in (0,1)$, such that
\begin{equation*}
\|f\|_{L^\infty(a,a+L)}\le N\left(\text{\rlap |{$\int_{F}$}}\,|f|\,d\tau\right)^{\gamma}M^{1-\gamma}.
\end{equation*}
\end{lemma}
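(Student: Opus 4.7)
The plan is to reduce this one-dimensional statement to a complex-analytic Remez-type estimate, in the same spirit as the propagation-of-smallness argument behind Theorem \ref{T:3}. First I would normalize by translation and scaling: set $a=0$, $L=1$, and divide through by $M$, so the hypothesis becomes $|f^{(k)}(x)|\le k!(2\rho)^{-k}$ on $[0,1]$. Expanding Taylor series centered at arbitrary points of $[0,1]$ then shows that $f$ extends to a holomorphic function on the complex neighborhood
\[
U=\{z\in\C:\operatorname{dist}(z,[0,1])<2\rho\},
\]
with an explicit bound $|f(z)|\le N_0(\rho)$ on the slightly smaller strip $U_\rho=\{z:\operatorname{dist}(z,[0,1])<\rho\}$, obtained by summing the geometric series.

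With this complex extension in hand, the claim reduces to a bounded-holomorphic-function Remez inequality: for a function $g$ holomorphic and bounded by $1$ on $U$ and any measurable $E\subset[0,1]$ with $|E|=\alpha>0$, there exist $N=N(\rho,\alpha)$ and $\gamma=\gamma(\rho,\alpha)\in(0,1)$ such that
\[
\|g\|_{L^\infty([0,1])}\le N\Bigl(\tfrac{1}{|E|}\int_E|g|\,d\tau\Bigr)^\gamma.
\]
Applied to $g=f/N_0(\rho)$ and rearranged, this yields the stated interpolation inequality with the correct dependence of $N$ and $\gamma$ on $\rho$ and $|F|/L$.

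To prove the Remez estimate itself, the cleanest route is to approximate $f$ uniformly on $[0,1]$ by its truncated Taylor polynomial $P_n$ of degree $n$ centered at $1/2$; the derivative bound gives a geometric error $\|f-P_n\|_{L^\infty([0,1])}\lesssim c(\rho)^{-n}$ for some $c(\rho)>1$, since $[0,1]$ is strictly interior to the disk of convergence. Then invoke the classical polynomial Remez inequality, $\|P_n\|_{L^\infty([0,1])}\le(4/\alpha)^n\|P_n\|_{L^\infty(E)}$, and optimize $n$ so that the geometric error is dominated by a controlled power of $\alpha$; this produces a bound of $\|f\|_{L^\infty([0,1])}$ by a power of $\|f\|_{L^\infty(E)}$. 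A final Chebyshev/absorbing argument converts $L^\infty(E)$ into the normalized $L^1$ average over $E$, at the cost of possibly shrinking $\gamma$ and enlarging $N$.

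The main obstacle is ensuring that $\gamma$ and $N$ depend only on $\rho$ and $|E|/L$, and not on the finer structure of $E$; this is exactly the content of the polynomial Remez inequality, but the Taylor truncation introduces a dependence on $n=n(\rho,\alpha)$ that must be balanced carefully so the final exponent ends up as a function of $\rho$ and $\alpha$ alone. Since \cite[Lemma 2]{ApraizEscauriaza1} carries out precisely this balance (and is itself the one-dimensional core of the proof of Theorem \ref{T:3} via the slicing argument of Brudnyi--Ganzburg or Nadirashvili), in the present paper one simply invokes that statement after the rescaling reduction above.
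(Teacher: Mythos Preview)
Your proposal is correct and matches the paper's treatment: the paper does not prove this lemma at all but simply declares it to be ``a rescaled and translated version of \cite[Lemma 2]{ApraizEscauriaza1}'', and your reduction by translation, scaling, and division by $M$ is exactly that reduction, after which you likewise invoke \cite[Lemma 2]{ApraizEscauriaza1}. The additional sketch you give (complex extension plus polynomial Remez and optimization over the truncation degree) is a reasonable outline of how that cited lemma is proved, but it goes beyond what the present paper does.
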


\begin{lemma}\label{L: FubiniTonelli} Let $q_0\in \partial\Omega$ and  $\mathcal J \subset \triangle_R(q_0)\times (0,T)$  be  a subset with $|\mathcal{J}|>0$.  Set
\begin{equation*}
\mathcal J_t=\{x\in \partial\Omega : (x,t)\in\mathcal J\},\ E=\{t\in (0,T): |\mathcal J_t|\ge |\mathcal J|/(2T)\},\ t\in (0,T).
\end{equation*}
Then, $\mathcal J_t\subset\triangle_R(q_0)$ is measurable for a.e. $t\in (0,T)$, $E$ is measurable in $(0,T)$, $|E|\ge |\mathcal J|/(2|\triangle_R(q_0)|)$ and
$
\chi_E(t)\chi_{\mathcal J_t}(x)\le \chi_{\mathcal J}(x,t)$ over $ \partial\Omega\times (0,T)$.
\end{lemma}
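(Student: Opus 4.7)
The plan is to simply replay the proof of Lemma~\ref{L: Fubini} on the product measure space $\partial\Omega\times(0,T)$, with $d\sigma$ in place of Lebesgue measure on $\Omega$. Since $\partial\Omega$ is Lipschitz, surface measure $d\sigma$ is a well-defined $\sigma$-finite Borel measure on $\partial\Omega$, and the product $d\sigma\otimes dt$ agrees with the surface measure on $\partial\Omega\times(0,T)$ that was introduced in the Notation paragraph of Section~\ref{S:1}. This puts Fubini-Tonelli at our disposal.

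First I would apply Fubini-Tonelli to the characteristic function $\chi_\mathcal{J}\in L^1(\partial\Omega\times(0,T), d\sigma\otimes dt)$: this gives that for a.e.\ $t\in(0,T)$ the slice $\mathcal J_t$ is $d\sigma$-measurable, that the function $t\mapsto |\mathcal J_t|$ is Lebesgue measurable on $(0,T)$ (so $E$ is measurable as a level set), and that
\begin{equation*}
|\mathcal J|=\int_0^T|\mathcal J_t|\,dt.
\end{equation*}
The inclusion $\mathcal J_t\subset\triangle_R(q_0)$ is automatic from $\mathcal J\subset\triangle_R(q_0)\times(0,T)$.

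Next I would split the integral according to whether $t\in E$ or not, and bound each piece trivially:
\begin{equation*}
|\mathcal J|=\int_E|\mathcal J_t|\,dt+\int_{(0,T)\setminus E}|\mathcal J_t|\,dt\le|\triangle_R(q_0)|\,|E|+\frac{|\mathcal J|}{2T}\cdot T=|\triangle_R(q_0)|\,|E|+\frac{|\mathcal J|}{2}.
\end{equation*}
Rearranging yields $|E|\ge|\mathcal J|/(2|\triangle_R(q_0)|)$, exactly as in Lemma~\ref{L: Fubini}.

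Finally, the pointwise inequality $\chi_E(t)\chi_{\mathcal J_t}(x)\le\chi_{\mathcal J}(x,t)$ is immediate from the definitions: if the left-hand side equals $1$ then $x\in\mathcal J_t$, and by the definition of $\mathcal J_t$ this forces $(x,t)\in\mathcal J$. There is no real obstacle here; the only subtlety worth flagging is the standard measure-theoretic check that Fubini-Tonelli applies for surface measure on a Lipschitz hypersurface, which follows from the parametrisation of $\partial\Omega$ by Lipschitz graphs given in Definition~\ref{D: Lipschitz domain}.
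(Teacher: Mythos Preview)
Your proposal is correct and follows essentially the same approach as the paper's own proof, which also just invokes Fubini's theorem and the split $|\mathcal J|=\int_E|\mathcal J_t|\,dt+\int_{(0,T)\setminus E}|\mathcal J_t|\,dt\le|\triangle_R(q_0)||E|+|\mathcal J|/2$. The only difference is that you spell out the measure-theoretic justifications and the pointwise inequality in more detail than the paper does.
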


\begin{proof} From Fubini's theorem,
\begin{equation*}
|\mathcal J|=\int_0^T|\mathcal J_t|\,dt=\int_E|\mathcal J_t|\,dt+\int_{[0,T]\setminus E}|\mathcal J_t|\,dt\le |\triangle_R(x_0)||E|+|\mathcal J|/2.
\end{equation*}
\end{proof}
\begin{theorem}\label{T: casofrontera} Suppose that  $\Omega$ verifies the condition  \eqref{E: desigualdadespectralocalizada}.
Assume that  $q_0\in \partial\Omega$ and    $R\in (0,1] $ such that $\triangle_{4R}(q_0)$ is real-analytic.
Let  $\mathcal J$ be a subset in $\triangle_R(q_0)\times (0,T)$  of positive surface measure on $\partial\Omega\times (0,T)$, $E$ and $\mathcal J_t$ be the measurable sets associated to $\mathcal J$ in Lemma \ref{L: FubiniTonelli}. Then, for each $\eta\in (0,1)$,  there are $N=N(\Omega,R, |\mathcal J|/(T|\triangle_R(q_0)|),\eta)$  and $\theta=\theta(\Omega,R, |\mathcal J|/
(T|\triangle_R(q_0)|),\eta)$ with $\theta\in (0,1)$, such that the inequality
\begin{equation}\label{E: loquesaleencasofrontera}
\|e^{t_2\Delta}f\|_{L^2(\Omega)} \le \left( N e^{N/(t_2-t_1)} \int_{t_1}^{t_2}\chi_E(t) \|\tfrac{\partial}{\partial\nu}e^{t\Delta}f\|_{L^1(\mathcal J_t)}\,dt \right)^{\theta}\|e^{t_1\Delta}f\|_{L^2(\Omega)}^{1-\theta},
\end{equation}
holds, when $0\le t_1<t_2\le T$ with $t_2-t_1<1$, $|E\cap (t_1,t_2)|\ge \eta (t_2-t_1)$ and $f\in L^2(\Omega)$. Moreover,
\begin{equation}\label{E: otrajodidamierdaconlaquetira}
\begin{split}
&e^{-\frac{N+1-\theta}{t_2-t_1}}\|e^{t_2\Delta}f\|_{L^2(\Omega)}- e^{-\frac{N+1-\theta}{q\left(t_2-t_1\right)}}\|e^{t_1\Delta}f\|_{L^2(\Omega)}\\
&\le N\int_{t_1}^{t_2}\chi_E(t) \|\tfrac{\partial}{\partial\nu}\,e^{t\Delta}f\|_{L^1(\mathcal J_t)}\,dt,\;\;\mbox{when }\;\;q\ge \tfrac{N+1-\theta}{N+1}.
\end{split}
\end{equation}
\end{theorem}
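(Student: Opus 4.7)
The plan is to mirror the strategy used to prove Theorem~\ref{T:4carcajada}, substituting the interior ingredients by their boundary counterparts. First, Lemma~\ref{L: FubiniTonelli} reduces the problem to a situation in which, for every $t\in E$, the ratio $|\mathcal J_t|/|\triangle_R(q_0)|$ is bounded below by a quantity depending only on $|\mathcal J|/(T|\triangle_R(q_0)|)$. After discarding a set of Lebesgue measure zero, we may assume that $\mathcal J_t$ is measurable for every $t\in E$.

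The key auxiliary estimate I will establish is the boundary analogue of the two-sphere one-cylinder inequality of Theorem~\ref{T: 2global 2-sphere 1-cylinder}: for $t\in E$, $0\le s<t$ and $t-s<1$,
\begin{equation*}
\|e^{t\Delta}f\|_{L^2(\Omega)}\le \bigl(Ne^{N/(t-s)}\|\tfrac{\partial}{\partial\nu}e^{t\Delta}f\|_{L^1(\mathcal J_t)}\bigr)^{\theta}\|e^{s\Delta}f\|_{L^2(\Omega)}^{1-\theta}.
\end{equation*}
To produce this, I first invoke the real-analyticity of the boundary patch $\triangle_{4R}(q_0)$ (Definition~\ref{D: fromteralocalrealanalitica}) to flatten a piece of $\partial\Omega$ via an analytic chart $(x',\phi(x'))$. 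Lemma~\ref{L: analiticidadcalorica} provides quantitative Cauchy bounds for $e^{t\Delta}f$ on $B_{2R}(q_0)\cap\overline\Omega$ with constant $M=N(t-s)^{-n/4}e^{8R^2/(t-s)}\|e^{s\Delta}f\|_{L^2(\Omega)}$. Combining with the analytic bounds on $\phi$, the pull-back of $\tfrac{\partial}{\partial\nu}e^{t\Delta}f$ becomes a real-analytic function on a flat ball satisfying the hypothesis of Theorem~\ref{T:3} with some $M'$ of the same form. Theorem~\ref{T:3} applied to the preimage of $\mathcal J_t$ then yields
\begin{equation*}
\|\tfrac{\partial}{\partial\nu}e^{t\Delta}f\|_{L^2(\triangle_R(q_0))}\le \bigl(Ne^{N/(t-s)}\|\tfrac{\partial}{\partial\nu}e^{t\Delta}f\|_{L^1(\mathcal J_t)}\bigr)^{\gamma}\|e^{s\Delta}f\|_{L^2(\Omega)}^{1-\gamma}.
\end{equation*}
Squaring, integrating over the slab $(s,t)$ and feeding the result into Theorem~\ref{interpolation} (or Remark~\ref{YUANYUAN1} applied with a small fraction of $(s,t)$), then eliminating the $L^2$-norm exactly as in the passage from \eqref{E: llegandoalfinal} to the conclusion of Theorem~\ref{T: 2global 2-sphere 1-cylinder}, yields the displayed boundary two-sphere one-cylinder inequality.

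Once this is available, the rest is formal and proceeds verbatim as in the proof of Theorem~\ref{T:4carcajada}. Fix $\eta\in(0,1)$, set $\tau=t_1+\tfrac{\eta}{2}(t_2-t_1)$, so that $|E\cap(\tau,t_2)|\ge\tfrac{\eta}{2}(t_2-t_1)$. Apply the boundary two-sphere one-cylinder inequality with $s=t_1$ for each $t\in E\cap(\tau,t_2)$, use the decay of $\|e^{t\Delta}f\|_{L^2(\Omega)}$, integrate in $t$, and invoke H\"older's inequality with exponent $p=1/\theta$; this gives \eqref{E: loquesaleencasofrontera}. Finally, combining \eqref{E: loquesaleencasofrontera} with Young's inequality, replacing $\epsilon$ by $\epsilon^\theta$, multiplying by $\epsilon^{(1-\theta)/\theta}e^{-N/(t_2-t_1)}$ and specializing to $\epsilon=e^{-1/(t_2-t_1)}$ produces \eqref{E: otrajodidamierdaconlaquetira}, exactly as in the closing computation of the proof of Theorem~\ref{T:4carcajada}.

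The main technical obstacle is the boundary propagation of smallness step, where I must verify that the pull-back of $\tfrac{\partial}{\partial\nu}e^{t\Delta}f$ through the analytic boundary chart inherits Cauchy-type bounds of the form $|\partial^\alpha g(x')|\le M'|\alpha|!/(\rho' R)^{|\alpha|}$ needed to invoke Theorem~\ref{T:3}. This requires carefully composing the quantitative estimates of Lemma~\ref{L: analiticidadcalorica} (which involve the $(t-s)^{-n/4}e^{8R^2/(t-s)}$ factor) with the derivative bounds on $\phi$ from Definition~\ref{D: fromteralocalrealanalitica}, and absorbing the combined combinatorics into a single geometric radius $\rho=\rho(\varrho,\delta,m)$ and a single exponential prefactor of the form $Ne^{N/(t-s)}$, so that the resulting $\gamma$ depends only on the geometric data and $|\mathcal J|/(T|\triangle_R(q_0)|)$.
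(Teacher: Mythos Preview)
Your overall plan mirrors Theorem~\ref{T:4carcajada}, but the key intermediate step --- a \emph{single-time} boundary two-sphere one-cylinder inequality
\[
\|e^{t\Delta}f\|_{L^2(\Omega)}\le \bigl(Ne^{N/(t-s)}\|\tfrac{\partial}{\partial\nu}e^{t\Delta}f\|_{L^1(\mathcal J_t)}\bigr)^{\theta}\|e^{s\Delta}f\|_{L^2(\Omega)}^{1-\theta}
\]
does not follow from the ingredients you invoke. Theorem~\ref{T:3} does give you control of $\|\tfrac{\partial}{\partial\nu}e^{t\Delta}f\|_{L^2(\triangle_R(q_0))}$ by $\|\tfrac{\partial}{\partial\nu}e^{t\Delta}f\|_{L^1(\mathcal J_t)}$ at the \emph{fixed} instant $t$, but Theorem~\ref{interpolation} (or Remark~\ref{YUANYUAN1}) only bounds $\|e^{t\Delta}f\|_{L^2(\Omega)}$ by the $L^2$-norm of $\tfrac{\partial}{\partial\nu}e^{\tau\Delta}f$ over an entire time slab $(\tau,\tilde\tau)\subset(s,t)$. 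Your ``squaring, integrating over the slab'' cannot bridge this: the spatial propagation estimate is available only at times $\tau\in E$, not on the whole slab, and no amount of Young/H\"older manipulation collapses the slab integral to a single time. In the interior case this problem does not arise because the spectral inequality \eqref{E: desigualdadespectralocalizada} is genuinely instantaneous; its boundary substitute, Theorem~\ref{interpolation}, is not.

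The paper therefore does \emph{not} pass through a single-time boundary inequality. Instead it applies Remark~\ref{YUANYUAN1} directly on $(t_1,t_2)$ to get \eqref{E: c14}, interpolates $L^2\le L^1^{1/2}L^\infty^{1/2}$ using the analytic bound \eqref{E: c15} to reach \eqref{E: c16}, and then performs two separate propagation-of-smallness steps: first in \emph{time} via Lemma~\ref{L: algoque tambieninfluye} (for each fixed $x\in\triangle_R(q_0)$, using the $t$-analyticity in \eqref{E: c13}) to pass from the full slab $(\tau,\tilde\tau)$ to $E\cap(\tilde t_1,\tilde t_2)$, and then in \emph{space} via Theorem~\ref{T:3} (for each fixed $t\in E$) to pass from $\triangle_R(q_0)$ to $\mathcal J_t$. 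The time-analyticity step (Lemma~\ref{L: algoque tambieninfluye}) is precisely the missing piece in your argument: it is what allows one to trade the slab integral for an integral over $E$-slices. Once \eqref{E: loquesaleencasofrontera} is obtained this way, your derivation of \eqref{E: otrajodidamierdaconlaquetira} from it is correct and matches the paper.
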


\begin{proof}
Because $\triangle_{4R}(x_0)$ is real-analytic, according to Lemma \ref{L: analiticidadcalorica}, there are $N=N(\varrho,\delta)$ and $\rho=\rho(\varrho,\delta)$, $0<\rho\le1$, such that
 \begin{equation}\label{E: c13}
 |\partial_x^\alpha\partial_t^\beta e^{t\Delta}f(x)|\le\frac{N\left(t-s\right)^{-\frac n4}\, e^{8R^2/\left(t-s\right)} |\alpha|!\,\beta!}{(R\rho)^{|\alpha |}\left(\left(t-s\right)/4\right)^\beta}\,\|e^{s\Delta}f\|_{L^2(\Omega)},
\end{equation}
when $x\in \triangle_{2R}(x_0)$, $0\leq s<t$, $\alpha\in \mathbb{N}^n$ and $\beta\ge 0$.

Let $0\leq t_1<t_2\leq T$ with $t_2-t_1<1$,  $\mathcal J$ be a subset in $\triangle_R(x_0)\times (0,T)$ with positive surface measure in $\partial\Omega\times (0,T)$. Let $\mathcal J_t$ and $E$ be the measurable sets associated to $\mathcal J$ in Lemma \ref{L: FubiniTonelli}. Assume that $\eta\in (0,1)$ satisfies
\begin{equation*}
|E\cap(t_1,t_2)|\geq\eta(t_2-t_1).
\end{equation*}
Define
\begin{align*}
 &\tau=t_1+\frac{\eta}{20}(t_2-t_1),\ \tilde{t}_1=t_1+\frac{\eta}{8}(t_2-t_1),\\
&\tilde{t}_2=t_2-\frac{\eta}{8}(t_2-t_1),\ \tilde{\tau}=t_2-\frac{\eta}{20}(t_2-t_1).
\end{align*}
Then, we have
\begin{equation*}
t_1<\tau<\tilde{t}_1<\tilde{t}_2<\tilde{\tau}<t_2\ \text{and}\ |E\cap(\tilde{t}_1,\tilde{t}_2)|\geq\tfrac{3\eta}{4}(t_2-t_1).
\end{equation*}
Taking $T=t_2-t_1$ (This $T$ is different from the time $T$ in Theorem~\ref{T: casofrontera} and only used here), $\e_1=\frac\eta{20}$, $\e_2=1-\frac\eta{20}$ and  replacing $f$ by $e^{t_1\Delta}f$ in \eqref{YUANYUAN2}, we get
\begin{equation}\label{E: c14}
\|e^{t_2\Delta}f\|_{L^2(\om)}\leq Ne^{N/
\left(t_2-t_1\right)}\|\tfrac{\p}{\p\nu}
\, e^{t\Delta }f\|^\theta_{L^2(\triangle_{R}(x_0)\times(\tau,
 \;\tilde{\tau}))}\|e^{t_1\Delta}f\|
_{L^2(\Omega)}^{1-\theta},
\end{equation}
with $N=N(\Omega, R, \eta)$ and $\theta=\theta(\Omega)\in(0,1)$. From \eqref{E: c13} with $\beta=0$, $|\alpha|=1$ and $s=t_1$, there is $N=N(\Omega, R, \eta)$ such that
\begin{equation}\label{E: c15}
\|\tfrac{\p}{\p\nu}\, e^{t\Delta}f\|_{L^\infty
(\triangle_{2R}(x_0)\times(\tau,\tilde{\tau}))}
\leq Ne^{N/\left(t_2-t_1\right)}\|e^{t_1\Delta}f\|_{L^2(\om)}\ .
\end{equation}
Next, \eqref{E: c14}, the interpolation inequality
\begin{equation*}
\begin{split}
&\|\tfrac{\p}{\p\nu}\,e^{t\Delta}f\|_{L^2
(\triangle_{R}(x_0)\times(\tau,\tilde{\tau}))}\\
&\leq\|\tfrac{\p}{\p\nu}\,e^{t\Delta}f\|^{1/2}_{L^1
(\triangle_{R}(x_0)\times(\tau,\tilde{\tau}))}
\|\tfrac{\p}{\p\nu}\, e^{t\Delta}f\|^{1/2}_{L^\infty
(\triangle_{R}(x_0)\times(\tau,\tilde{\tau}))}
\end{split}
\end{equation*}
and \eqref{E: c15} yield
\begin{equation}\label{E: c16}
\|e^{t_2\Delta}f\|_{L^2(\om)}\leq Ne^{N/\left(t_2-t_1\right)}\|e^{t_1\Delta}f\|
_{L^2(\om)}^{1-\theta/2}\|\tfrac{\p}{\p\nu}\, e^{t\Delta}f\|^{\theta/2}_{L^1
(\triangle_{R}(x_0)\times(\tau,\tilde{\tau}))},
\end{equation}
with $N=N(\om, R, \eta)$ and $\theta=\theta(\Omega)\in(0,1)$. Next, setting
\begin{equation*}
v(x,t)=\tfrac{\p}{\p\nu}\,e^{t\Delta}f(x),\ \text{for}\ x\in
\triangle_{2R}(x_0),\;\;t>0,
\end{equation*}
we have
\begin{equation}\label{E: c17}
\|v\|_{L^1(\triangle_{R}(x_0)
\times(\tau,\tilde{\tau}))}\leq\left(\tilde{\tau}-\tau\right)\int_{\triangle_{R}(x_0)}
\|v(x,\cdot)\|_{L^\infty(\tau,\tilde{\tau})}\,d\s\, .
\end{equation}
Write $[\tau,\tilde{\tau}]=[a,a+L]$,  with $a=\tau$ and $L=\tilde{\tau}-\tau=(1-\frac\eta{10})(t_2-t_1)$.
Also, \eqref{E: c13} with  $|\alpha|=1$ and $s=t_1$ shows that there is $N=N(\om,R,\eta)$ such that for each fixed $x\in \triangle_{2R}(x_0)$, $t\in[\tau,\tilde{\tau}]$ and $\beta\ge 0$,
\begin{equation}\label{E: c18}
\begin{split}
|\p_t^\beta v(x,t)|&\leq
\frac{Ne^{N/\left(t-t_1\right)}\beta!}{\left(\left(t-t_1\right)/4\right)^\beta}\,\|e^{t_1\Delta}f\|_{L^2(\om)}\\
&\leq
\frac{Ne^{N/\left(t_2-t_1\right)}\beta!}{\left(\eta(t_2-t_1)/80\right)^\beta}
\|e^{t_1\Delta}f\|_{L^2(\om)}\\
&=\frac{M\beta!}{\left(2\rho L\right)^\beta}\, ,
\end{split}
\end{equation}
with
\begin{equation*}
M=Ne^{N/\left(t_2-t_1\right)}\|e^{t_1\Delta}f\|_{L^2(\om)}\ \text{and}\ \rho=\frac{\eta}{16\left(10-\eta\right)}\, .
\end{equation*}
From \eqref{E: c18}, Lemma \ref{L: algoque tambieninfluye} with $F= E\cap(\tilde{t}_1,\tilde{t}_2)$
and observing that
$$\frac{15\eta}{2\left(10-\eta\right)}\leq \frac{|F|}{L}\leq
\frac{5\left(4-\eta\right)}{2\left(10-\eta\right)},$$
we find that for each $x$ in $\triangle_{2R}(x_0)$
\begin{equation}\label{E: c19}
\|v(x,\cdot)\|_{L^{\infty}(\tau,\tilde{\tau})}
\leq \left(\text{\rlap |{$\int_{E\cap(\tilde{t}_1,\tilde{t}_2)}$}}|v(x,t)|\;dt\right)
^\gamma
\left(Ne^{N/\left(t_2-t_1\right)}\|e^{t_1\Delta}f\|_{L^2(\om)}
\right)^{1-\gamma},
\end{equation}
with $N=N(\om, R, \eta)$ and $\gamma=\gamma(\eta)$ in $(0,1)$. Thus, it follows from \eqref{E: c17} and \eqref{E: c19} that
\begin{equation}\label{E: c20}
\begin{split}
&\|v\|_{L^1(\triangle_{R}(x_0)\times
(\tau,\tilde{\tau}))}\\
&\leq\int_{\triangle_{R}(x_0)} \left(\int_{E\cap(\tilde{t}_1,\tilde{t}_2)}
|v(x,t)|\;dt\right)^\gamma d\s\left(Ne^{N/\left(t_2-t_1\right)}\,\|e^{t_1\Delta}f\|_{L^2(\om)}
\right)^{1-\gamma}\\
&\leq\left(\int_{E\cap(\tilde{t}_1,\tilde{t}_2)}
\int_{\triangle_{R}(x_0)}
|v(x,t)|\;d\s dt\right)^\gamma Ne^{N/\left(t_2-t_1\right)}\|e^{t_1\Delta}f\|^{1-\gamma}_{L^2(\om)},
\end{split}
\end{equation}
with $N$ and $\gamma$ as above. In the second inequality in \eqref{E: c20} we used H\"{o}lder's inequality.

 Next, from (\ref{E: c13}) with $\beta=0$ and $s=t_1$, we have that for $t\in (\tilde{t}_1,\tilde{t}_2)$,
 \begin{equation*}
 t-t_1\geq \tilde{t}_1-t_1=\tfrac{\eta}{8}\left(t_2-t_1\right)
 \end{equation*}
and
\begin{equation*}
\|\p_{x}^\alpha v(\cdot ,t)\|_{L^\infty(\triangle_{2R}(x_0))} \leq
\frac{Ne^{N/\left(t_2-t_1\right)} |\alpha|!}{(R\rho)^{|\alpha|}}\, \|e^{t_1\Delta}f\|_{L^2(\om)},
\end{equation*}
when $\alpha\in \mathbb{N}^n$ and with $N=N(\om,R,\eta)$.
Also, $|\mathcal{J}_t|\geq |\mathcal{J}|/\left(2T\right)$, when $t\in E$.
By the obvious generalization  of Theorem \ref{T:3} to real-analytic hypersurfaces, there are $N=N\left(\om,R, \eta, |\mathcal{J}|/\left(T|\triangle_R(x_0)|\right)\right)$ and $\vartheta=\vartheta\left(\om,|\mathcal{J}|/\left(T|\triangle_R(x_0)|\right)\right)\in(0,1)$ such that
\begin{equation}\label{E: c21}
\int_{\triangle_{R}(x_0)}|v(x,t)|\;d\s\leq
\left(\int_{\mathcal{J}_t}|v(x,t)|\;d\s\right)^\vartheta
\left(Ne^{N/\left(t_2-t_1\right)}\|e^{t_1\Delta}f\|_{L^2(\om)}
\right)^{1-\vartheta},
\end{equation}
when $t\in E\cap(\tilde{t}_1,\tilde{t}_2)$.
Both \eqref{E: c20} and (\ref{E: c21}), together with
H\"{o}lder's inequality, imply that
\begin{equation*}\label{E: c22}
\|v\|_{L^1(\triangle_{R}(x_0)\times
(\tau,\tilde{\tau}))}\leq\left(Ne^{N/\left(t_2-t_1\right)}
\int_{E\cap(\tilde{t}_1,\tilde{t}_2)}\int_{\mathcal{J}_t}
|v(x,t)|\;d\sigma dt\right)^{\vartheta\gamma}\|e^{t_1\Delta}f\|
_{L^2(\om)}^{1-\vartheta\gamma}.
\end{equation*}
This, along with \eqref{E: c16} and the definition of $v$ leads to the first estimate in this theorem.

 The  second estimate in the theorem can be proved with the method we used in the proof of the second part of Theorem \ref{T:4carcajada}.
\end{proof}

\vskip 10 pt

\begin{proof}[Proof of Theorem \ref{boundary observability}]
Let $E$ and $\mathcal J_t$ be the sets associated to $\mathcal J$ in Lemma \ref{L: FubiniTonelli} and $l$ be a density point in $E$. For $z>1$ to be fixed later, $\{l_m\}$ denotes the sequence associated to $l$ and $z$ in Lemma \ref{L: 2teoriamedida}. Because of \eqref{E: intervalosbienencajados} and from Theorem \ref{T: casofrontera} with $\eta=1/3$, $t_1=l_{m+1}$ and $t_2=l_{m}$, with $m\ge 1$,
 there are $N=N(\Omega,R, |\mathcal J|/\left(T|\triangle_R(q_0)|\right))>0$  and $\theta=\theta(\Omega,R, |\mathcal J|/\left(T|\triangle_R(q_0)|\right))$, with $\theta\in (0,1)$, such that
\begin{equation*}
\begin{split}
&e^{-\frac{N+1-\theta}{l_m-l_{m+1}}}\|e^{l_m\Delta}f\|_{L^2(\Omega)}- e^{-\frac{N+1-\theta}{q\left(l_m-l_{m+1}\right)}}\|e^{l_{m+1}\Delta}f\|_{L^2(\Omega)}\\
&\le N\int_{l_{m+1}}^{l_{m}}\chi_E(s) \|\tfrac{\partial}{\partial\nu}\,e^{s\Delta}f\|_{L^1(\mathcal J_s)}\,ds,\;\;\mbox{when}\;\;q\ge \frac{N+1-\theta}{N+1}\;\;\mbox{and}\;\;m\ge 1.
\end{split}
\end{equation*}
Let
\begin{equation*}
z=\frac{1}{2}\left(1+\frac{N+1}{N+1-\theta}\right).
\end{equation*}
Then, we can use the same arguments as those in the proof of  Theorem~\ref{T:5carcajada} to verify Theorem~\ref{boundary observability}.
\end{proof}
\begin{remark}\label{yuanyuan5}
{\it The proof of Theorem \ref{boundary observability} also implies the following observability estimate:
\begin{equation*}
\sup_{m\ge 0}\sup_{l_{m+1}\le t\le l_m}e^{-z^{m+1}A}\|e^{t\Delta}f\|_{L^2(\Omega)}\le N\int_{\mathcal J\cap (\partial\Omega\times [l,l_1])}\left|\tfrac{\partial}{\partial\nu}\,e^{t\Delta}f(x)\right|\,d\sigma dt,
\end{equation*}
for $f$ in $L^2(\Omega)$, with $A=2(N+1-\theta)^2/[\theta(l_1-l)]$ and with $z$, $N$ and $\theta$ as given along the proof of Theorem \ref{boundary observability}. Here, $l_0=T$.}
\end{remark}

\begin{remark}\label{section4remark11}
{\it $(i)$ In  Theorem \ref{boundary observability}, one may relax  more the hypothesis on $\Omega$ and to allow $\triangle_{4R}(q_0)$ to be piece piecewise analytic or simply to require that
\begin{equation*}
|\{q\in \triangle_{4R}(q_0): \triangle_{4r}(q)\ \text{is not real-analytic for some $r\le R$}\}|=0.
\end{equation*}
 $(ii)$ In particular, Theorem \ref{boundary observability} holds when $\Omega$ is a Lipschitz polyhedron in $\Rn$ and $\mathcal J$ is a measurable subset  with positive surface measure on $\partial\Omega\times (0,T)$. For if $\Omega$ is a Lipschitz polyhedron, Theorem \ref{T: unapropiedadgemotetrica} shows that  $\Omega$ verifies the condition \eqref{E: desigualdadespectralocalizada}. Also, $\mathcal J$ must have a boundary density point $(q,\tau)$, $q\in\partial\Omega$, $\tau\in (0,T)$, with $q$ in the interior of one the open flat faces of $\partial\Omega$. Thus, we can find $R>0$ such that
\begin{equation*}
\frac{|\triangle_R(q)\times(\tau-R,\tau+R)\cap\mathcal J|}{|\triangle_R(q)\times(\tau-R,\tau+R)|}\ge\frac 12\, ,
\end{equation*}
with $\triangle_{4R}(q)$ contained in a flat faces of $\partial\Omega$. Then, replace the original set $\mathcal J$ by $\triangle_R(q)\times (\tau-R,\tau+R)\cap\mathcal J\subset\triangle_R(q)\times (0,T)$, set $q_0=q$ and apply Theorem 2 as stated. $(iii)$ Theorem \ref{boundary observability} improves the work in \cite{Micu}.}
\end{remark}
\begin{remark}\label{R: 20}
{\it When $\mathcal J=\Gamma\times (0,T)$, $\Gamma\subset\triangle_{R}(q_0)$ is a measurable set, one may take $l=T/2$, $l_1=T$, $z=2$ and the constant $B$ in Theorem \ref{boundary observability} becomes
\begin{equation*}
B=A(\Omega, R, |\Gamma |/|\triangle_R(q_0)|)/T.
\end{equation*}}
\end{remark}
\begin{remark}\label{yuanyuan1}
{\it Theorem~\ref{interpolation} also implies the following: if $\Omega$ is only a bounded Lipschitz domain and verifies the condition \eqref{E: desigualdadespectralocalizada}, the heat equation is null controllable with $L^\infty(\Gamma\times (0,T))$ controls, when $\Gamma$ is an open subset of $\partial\Omega$. This seems to be a new result and we give its proof in the Appendix in section \ref{S:8}.}
\end{remark}

\section{Applications}\label{S:3}
 Throughout this section, we assume that $T>0$, $\Omega$ is a bounded Lipschitz  domain verifying  the condition  \eqref{E: desigualdadespectralocalizada} and we show several applications of Theorem \ref{T:5carcajada} and Theorem \ref{boundary observability}  to  some control problems for the heat equation.

First of all, we will show that Theorems \ref{T:5carcajada} and \ref{boundary observability} imply the null controllability with controls restricted over measurable subsets in $\Omega\times(0,T)$ and $\partial\Omega\times(0,T)$ respectively. Let $\mathcal{D}$ be a measurable subset with positive measure in $ B_{R}(x_0)\times(0,T)$ with $B_{4R}(x_0)\subset\Omega$. Let $\mathcal{J}$ be a measurable subset with positive surface measure in $ \triangle_{R}(q_0)\times(0,T)$, where $q_0\in\partial\Omega$, $R\in (0,1]$ and $\triangle_{4R}(q_0)$ is real-analytic.
Consider the following controlled heat equations:
 \begin{equation}\label{YUANYUAN3}
\begin{cases}
\partial_tu-\Delta u=\chi_{\mathcal{D}}v, &\text{in}\ \Omega\times (0,T],\\
u=0, &\text{on}\ \partial\Omega\times [0,T],\\
u(0)=u_0,\ &\text{in}\ \Omega,
\end{cases}
\end{equation}
and
 \begin{equation}\label{YUANYUAN4}
\begin{cases}
\partial_tu-\Delta u=0,\ &\text{in}\ \Omega\times (0,T],\\
u=g\,\chi_{\mathcal J},\ &\text{on}\ \partial\Omega\times [0,T],\\
u(0)=u_0,\ &\text{in}\ \Omega,
\end{cases}
\end{equation}
where $u_0\in L^2(\Omega)$,  $v\in L^\infty(\Omega\times (0,T))$ and $g\in L^\infty(\partial\Omega\times (0,T))$ are controls.
 {\it We say  that $u$ is the solution to Equation \eqref{YUANYUAN4} if
$v\equiv u-e^{t\Delta}u_0$ is the unique solution defined in  \cite[Theorem 3.2]{FabesSalsa1} (See also \cite[Theorems 8.1 and 8.3]{R1}) to
\begin{equation*}
\begin{cases}
\partial_tv-\Delta v=0,\ &\text{in}\ \Omega\times(0,T),\\
v=g\chi_{\mathcal{J}},\ &\text{on}\ \partial\Omega\times(0,T),\\
v(0)=0,\ &\text{in}\ \Omega,
\end{cases}
\end{equation*}
with $g$ in $L^p(\partial\Omega\times(0,T))$ for some $2\leq p\leq\infty$.}
 From now on, we always denote by $u(\cdot\,; u_0, v)$ and $u(\cdot\,; u_0, g)$ the solutions of Equations (\ref{YUANYUAN3}) and (\ref{YUANYUAN4}) corresponding to $v$ and $g$ respectively.

\begin{corollary}\label{cor0} For each $u_0\in L^2(\Omega)$, there are bounded control functions $v$ and $g$ with
\begin{equation*}
\|v\|_{L^\infty(\Omega\times (0,T))} \leq C_1\|u_0\|_{L^2(\Omega)},\quad  \|g\|_{L^\infty(\partial\Omega\times (0,T))}\leq C_2\|u_0\|_{L^2(\Omega)},
\end{equation*}
such that $u(T; u_0, v)=0$ and $u(T; u_0, g)=0$. Here $C_1=C(\Omega, T, R, \mathcal{D})$ and $C_2=C(\Omega, T, R, \mathcal{J})$.
\end{corollary}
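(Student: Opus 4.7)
The plan is to obtain both controls by the standard duality between the $L^1$-type observability inequalities \eqref{E: observabilidadparabolica} and \eqref{E: observabilidadparabolica2frontera} and $L^\infty$-controllability, via the Hahn-Banach theorem. For the interior problem, Duhamel's formula and the self-adjointness of $\{e^{t\Delta}\}_{t\ge 0}$ show that, given $u_0\in L^2(\Omega)$ and $v\in L^\infty(\Omega\times(0,T))$ supported in $\mathcal D$,
\[
(u(T;u_0,v),f)=(u_0,e^{T\Delta}f)+\int_{\mathcal D}v(x,t)\,e^{(T-t)\Delta}f(x)\,dxdt,\;\;\mbox{for all}\;\; f\in L^2(\Omega),
\]
so that $u(T;u_0,v)=0$ is equivalent to the identity
\[
\int_{\mathcal D}v\,\varphi^f\,dxdt=-(u_0,e^{T\Delta}f),\;\;\mbox{for all}\;\; f\in L^2(\Omega),
\]
where $\varphi^f(x,t)=e^{(T-t)\Delta}f(x)$. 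After the change of variables $s=T-t$, Theorem~\ref{T:5carcajada} applied to the time reflection $\mathcal D'$ of $\mathcal D$ (which has the same measure) yields a constant $B'>0$ such that $\|e^{T\Delta}f\|_{L^2(\Omega)}\le e^{B'}\|\varphi^f\|_{L^1(\mathcal D)}$.

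Hence the linear functional $\Lambda:\varphi^f|_{\mathcal D}\mapsto -(u_0,e^{T\Delta}f)$, well defined on the subspace $V=\{\varphi^f|_{\mathcal D}:f\in L^2(\Omega)\}\subset L^1(\mathcal D)$, verifies $|\Lambda(\varphi^f)|\le e^{B'}\|u_0\|_{L^2(\Omega)}\|\varphi^f\|_{L^1(\mathcal D)}$. Extending $\Lambda$ to $L^1(\mathcal D)$ by Hahn-Banach without enlarging its norm and using the duality $(L^1(\mathcal D))^*=L^\infty(\mathcal D)$, I obtain $v\in L^\infty(\mathcal D)$ with $\|v\|_{L^\infty(\mathcal D)}\le e^{B'}\|u_0\|_{L^2(\Omega)}$ realizing the identity above. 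Extending $v$ by zero outside $\mathcal D$ gives the interior control with $C_1=e^{B'}$.

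For the boundary problem, the analogous integration by parts between $u(\cdot;u_0,g)$ and $\varphi^f$, which vanishes on $\partial\Omega\times(0,T)$, produces
\[
(u(T;u_0,g),f)=(u_0,e^{T\Delta}f)-\int_{\mathcal J}g\,\tfrac{\partial}{\partial\nu}\varphi^f\,d\sigma dt.
\]
Theorem~\ref{boundary observability} applied to the time reflection $\mathcal J'$ of $\mathcal J$ gives a constant $B''>0$ with $\|e^{T\Delta}f\|_{L^2(\Omega)}\le e^{B''}\|\tfrac{\partial}{\partial\nu}\varphi^f\|_{L^1(\mathcal J)}$, the required integrability of the normal trace being supplied by Lemma~\ref{L: analiticidadcalorica} and the real analyticity of $\triangle_{4R}(q_0)$. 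The same Hahn-Banach argument, now in $L^1(\mathcal J)$, produces $g\in L^\infty(\partial\Omega\times(0,T))$ supported in $\mathcal J$ with $\|g\|_{L^\infty}\le e^{B''}\|u_0\|_{L^2(\Omega)}$ verifying $u(T;u_0,g)=0$. The main subtlety, to be handled by first approximating $u_0$ by smooth compactly supported data, for which the two pairings are classical, and then passing to the weak-$\ast$ limit in $L^\infty$ of the resulting controls (uniformly bounded thanks to the observability constants $e^{B'}$ and $e^{B''}$), is to justify the duality identities at the minimal regularity of $u_0\in L^2(\Omega)$ and $g\in L^\infty(\partial\Omega\times(0,T))$, i.e.\ in the very weak sense introduced in the statement preceding the corollary.
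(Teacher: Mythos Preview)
Your interior argument is correct and matches the paper's implicit treatment: since $\varphi^f\in L^\infty(0,T;L^2(\Omega))$, the restriction $\varphi^f|_{\mathcal D}$ lies in $L^1(\mathcal D)$ for every $f\in L^2(\Omega)$, so the Hahn--Banach step goes through, and the Duhamel identity is standard.

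The boundary argument has a genuine gap. You place the Hahn--Banach argument in $L^1(\mathcal J)$, but for general $f\in L^2(\Omega)$ there is no reason that $\tfrac{\partial}{\partial\nu}\varphi^f$ belongs to $L^1(\mathcal J)$ when $\mathcal J$ meets a neighbourhood of $\{t=T\}$: Lemma~\ref{L: analiticidadcalorica} only yields the pointwise bound $N(T-t)^{-n/4}e^{8R^2/(T-t)}\|f\|_{L^2(\Omega)}$, which blows up non-integrably as $t\to T$, and the sharper estimate \eqref{E: acojono} likewise deteriorates there. Consequently your subspace $V$ need not sit inside $L^1(\mathcal J)$, and even if you produced a $g\in L^\infty(\mathcal J)$, the pairing $\int_{\mathcal J}g\,\partial_\nu\varphi^f$ required to test $u(T;u_0,g)=0$ against all $f\in L^2(\Omega)$ may be ill-defined. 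Your proposed remedy---approximating $u_0$---does not touch this, since the difficulty lies with the test function $f$, not with the initial datum.

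The paper handles this by working not on $\mathcal J$ but on the smaller set $\mathcal M=\mathcal J\cap\left(\partial\Omega\times[T-l_1,T-l]\right)$, obtained from the density-point construction in Lemma~\ref{L: 2teoriamedida} applied to the time-reflected slice set; the proof of Theorem~\ref{boundary observability} already yields observability over this time-compact portion (cf.\ Remark~\ref{yuanyuan5}). On $\mathcal M$ one has $\partial_\nu\varphi^f\in L^{2+\varepsilon}$ uniformly in $t$ by \eqref{E: acojono}, so $X\subset L^1(\mathcal M)$ and Hahn--Banach applies cleanly. The resulting control $g$ is then supported in $\partial\Omega\times[2\eta,T-2\eta]$ for some $\eta>0$, and this time-separation is precisely what allows the rigorous justification of the duality identity \eqref{E: vayasecuaciioonesmasengorosa} in the Appendix, via regularisation of $g\chi_{\mathcal M}$ and approximation of $\Omega$ by smooth subdomains. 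The missing idea in your argument is this restriction to a time-compact portion of $\mathcal J$.
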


\begin{proof}
 We only prove the  boundary controllability.
 Let $E$ be the measurable set associated to $\mathcal J$ in Lemma \ref{L: FubiniTonelli}. Write
\begin{equation*}
\widetilde{\mathcal{J}}=\left\{(x,t) :  (x, T-t)\in {\mathcal{J}}\right\}\;\;\mbox{and}\;\;\widetilde{E}=\left\{t :  T-t\in {E}\right\}.
\end{equation*}
  Let $l>0$ be a density point of $\widetilde{E}$ (Hence, $T-l$ is a density point of $E$).  We choose $z$, $l_1$ and the sequence $\{l_m\}$
as in the proof of Theorem \ref{boundary observability} but with $\mathcal J$ and $E$ accordingly  replaced by $\widetilde{\mathcal{J}}$ and $\widetilde{E}$. It is clear that
\begin{equation*}
0<l<\dots<l_{m+1}<l_m\dots<l_1<l_0=T,\ \lim_{m\to +\infty}l_m=l.
\end{equation*}
We set
\begin{equation*}
\mathcal M=\mathcal J\cap\left(\partial\Omega\times [T-l_1,T-l]\right)\subset\mathcal J.
\end{equation*}
It is clear that $|\mathcal{M}|>0$. The proof of Theorem~\ref{boundary observability}, the change of variables $t=T-\tau$ and Remark \ref{yuanyuan5} show that the observability inequality
\begin{equation}\label{E: observabilidadfromteraguay}
\|\varphi(0)\|_{L^2(\Omega)}\le e^B\int_{\mathcal M}|\tfrac{\partial\varphi}{\partial\nu}(p,t)|\,d\sigma dt,
\end{equation}
holds, when $\varphi$ is the unique solution in $L^\infty([0,T],L^2(\Omega))\cap L^2([0,T], H^1_0(\Omega))$ to
\begin{equation}\label{E:unadelasecuacionesmejoresuqeconozco}
\begin{cases}
\partial_t\varphi+\Delta\varphi=0,\ &\text{in}\ \Omega\times [0,T),\\
\varphi=0,\ &\text{on}\ \partial\Omega\times [0,T),\\
\varphi(T)=\varphi_T,\ &\text{in}\ \partial\Omega,
\end{cases}
\end{equation}
for some $\varphi_T$ in $L^2(\Omega)$. Set
\begin{equation*}\label{E: ladefiniciondeX22}
X=\{ \tfrac{\partial\varphi}{\partial\nu}|_{\mathcal M}: \varphi(t)=e^{(T-t)\Delta}\varphi_T,\ \text{for}\ 0\le t\le T,\ \text{for some}\ \varphi_T\in L^2(\Omega)\}.
\end{equation*}
Since $\mathcal{M}\subset\partial\Omega\times[T-l_1,T-l]$, $X$ is a subspace of $L^1(\mathcal M)$  (See \eqref{E: acojono} and \eqref{E: supernecesariamente}) and from \eqref{E: observabilidadfromteraguay}, the linear mapping $\Lambda: X\longrightarrow\R$, defined by
\begin{equation*}
\Lambda(\tfrac{\partial\varphi}{\partial\nu}|_{\mathcal M})=(u_0,\varphi(0)),
\end{equation*}
verifies
\begin{equation*}
\left|\Lambda(\tfrac{\partial\varphi}{\partial\nu}|_{\mathcal M})\right|\le e^B\|u_0\|_{L^2(\Omega)}\int_{\mathcal M}|\tfrac{\partial\varphi}{\partial\nu}(p,t)|\,d\sigma dt,\ \text{when}\  \tfrac{\partial\varphi}{\partial\nu}|_{\mathcal M}\in X.
\end{equation*}
From the Hahn-Banach theorem, there is a linear extension $T: L^1(\mathcal M)\longrightarrow\R$ of $\Lambda$, with
\begin{equation*}
\begin{split}
&T(\tfrac{\partial\varphi}{\partial\nu}|_{\mathcal M})=(u_0,\varphi(0)),\ \text{when}\ \tfrac{\partial\varphi}{\partial\nu}|_{\mathcal M}\in X,\\
&|T(f)|\le e^B\|u_0\|\|f\|_{L^1(\mathcal M)},\ \text{for all}\ f\in L^1(\mathcal M).
\end{split}
\end{equation*}
Thus, $T$ is in $L^1(\mathcal M)^{*}=L^\infty(\mathcal M)$ and there is $g$ in $L^\infty(\mathcal M)$ verifying
\begin{equation*}
T(f)=\int_{\mathcal M}fg\,d\s dt,\ \text{for all}\ f\in L^1(\mathcal M)\ \text{and}\ \|g\|_{L^\infty(\mathcal M)}\le e^B\|u_0\|.
\end{equation*}
We extend $g$ over $\partial\Omega\times (0,T)$ by setting it to be zero outside $\mathcal{M}$ and denote the extended function by $g$ again. Then it holds that $u(T;u_0, g)=0$ provided that we know that
\begin{equation}\label{E: vayasecuaciioonesmasengorosa}
\int_{\Omega}u(T;u_0, g)\varphi_T\,dx=\int_{\Omega}u_0\varphi(0)\,dx-\int_{\mathcal M}g\,\tfrac{\partial\varphi}{\partial\nu}\,d\s dt,\ \text{for all}\ \varphi_T\in L^2(\Omega).
\end{equation}

To prove (\ref{E: vayasecuaciioonesmasengorosa}), we first use the unique solvability for the problem
 \begin{equation*}
\begin{cases}
\partial_tu-\Delta u=0,\ &\text{in}\ \Omega\times (0,T],\\
u=\gamma,\ &\text{on}\ \partial\Omega\times [0,T],\\
u(0)=0\ &\text{in}\ \Omega,
\end{cases}
\end{equation*}
with lateral Dirichlet data $\gamma$ in $L^p(\partial\Omega\times (0,T))$, $2\le p\le\infty$, stablished in \cite[Theorem 3.2]{FabesSalsa1} (See also \cite[Theorems 8.1 and 8.3]{R1}). Then, because $g\chi_{\mathcal M}$ is bounded and supported in $\partial\Omega\times [T-l_1,T-l]\subset \partial\Omega\times (2\eta,T-2\eta)$ for some $\eta>0$, the calculations leading to \eqref{E: vayasecuaciioonesmasengorosa} can be justified via the regularization of $g\chi_{\mathcal M}$ and the approximation of $\Omega$ by smooth domains $\{\Omega_j;j\geq1\}$ as in \cite[Lemma 2.2]{R1}. For the sake of the completeness we provide the detailed proof of this identity in an Appendix in section \ref{S:8}.
\end{proof}

Now we apply Theorems \ref{T:5carcajada} and \ref{boundary observability} to get the bang-bang property for the minimal time control problems usually called the first type of time optimal
control problems. They are stated as follows: Let $\o$ be a measurable subset with positive measure in $B_R(x_0)$, $B_{4R}(x_0)\subset\om$.
Suppose that  $\triangle_{4R}(q_0)$ is real-analytic for some $q_0\in \partial\Omega$ and $R\in (0,1]$ and
let $\Gamma$ be a measurable subset with positive surface measure of $\triangle_R(x_0)$. For each $M>0$, define the following  control constraint set:
$$\mathcal{U}^1_M=\{v\;\;\text{measurable on}\;\; \om\times\R^+:
\;\;|v(x,t)|\leq M\;\;\text{for a.e.}\;\;(x,t)\in\om\times\R^+\}.$$
$$\mathcal{U}^2_M=\{g\;\;\text{measurable on}\;\; \partial\om\times\R^+:
\;\;|g(x,t)|\leq M\;\;\text{for a.e.}\;\;(x,t)\in\partial\om\times\R^+\}.$$
Let $u_0\in L^2(\Omega)\setminus\{0\}$.  Consider the minimal time control problems:
$$(TP)^1_M:\;\; T^1_M\equiv \min_{v\in\mathcal{U}^1_M}
\left\{t>0:\;\;e^{t\Delta}u_0+\int_0^te^{(t-s)\Delta}
({\chi_{\omega}}v)\,ds=0\right\}$$
and
$$(TP)^2_M:\;\; T^2_M\equiv \min_{g\in\mathcal{U}^2_M}
\left\{t>0:\;\;u(x,t;g)=0\;\;\mbox{for a.e.}\;\; x\in \Omega\right\},$$
where $u(\cdot, \cdot\,;g)$ is the solution to
\begin{equation}\label{ZHANG5}
\begin{cases}
\partial_t u- \Delta u=0,\ &\text{in}\ \Omega\times \R^+,\\
u=g\chi_{\Gamma},\  &\text{on}\ \partial\Omega\times\R^+,\\
u(0)=u_0,\ &\text{in}\ \Omega.
\end{cases}
\end{equation}
Any solution of $(TP)_{M}^i$, $i=1,2$, is called a minimal time control to this problem. According to Theorem~\ref{T:5carcajada} and Theorem 3.3 in  \cite{phungwangzhang}, problem $(TP)^1_M$  has solutions.  By Theorem~\ref{boundary observability}, using the same arguments as those  in the proof of Theorem 3.3 in  \cite{phungwangzhang}, we can verify that there is $g\in\mathcal{U}^2_M$  such that for some $t>0$, $u(x, t; g)=0$ for a.e.  $x\in \Omega$.

\begin{lemma}\label{ZHANG4}
Problem $(TP)^2_M $ has solutions.
\end{lemma}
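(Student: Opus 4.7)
The plan is the classical minimizing-sequence and weak-$*$ compactness argument for time-optimal control, adapted to the boundary-control setting. First, I would verify that the infimum $T^2_M$ is finite: by the remark preceding the lemma, Theorem \ref{boundary observability} together with the duality argument used in Corollary \ref{cor0} (and following Theorem 3.3 in \cite{phungwangzhang}) produces, for the given $M$ (after possibly rescaling in time via the constant $B=A/T$ of Remark \ref{R: 20}), some $\widetilde t>0$ and $\widetilde g\in\mathcal U^2_M$ with $u(\cdot,\widetilde t;\widetilde g)\equiv 0$. Hence the set of admissible times is nonempty and $T^2_M\in [0,\widetilde t]$.

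Next, I would pick a minimizing sequence: times $t_n\searrow T^2_M$ and controls $g_n\in\mathcal U^2_M$ with $u(\cdot,t_n;g_n)=0$ a.e.\ in $\Omega$. Extend each $g_n$ by zero to $\partial\Omega\times\R^+$. The sequence $\{g_n\}$ is bounded in $L^\infty(\partial\Omega\times(0,\widetilde t))$ and supported in $\Gamma\times(0,\widetilde t)$, so by the Banach-Alaoglu theorem we may extract a subsequence (not relabeled) and a $g^*\in L^\infty(\partial\Omega\times\R^+)$ with $g_n\rightharpoonup^{*} g^*$. Weak-$*$ lower semicontinuity of the $L^\infty$ norm and of the support condition gives $g^*\in\mathcal U^2_M$ with $g^*$ supported in $\overline\Gamma\times[0,\widetilde t]$.

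Then I would pass to the limit in the solutions. Using the identity \eqref{E: vayasecuaciioonesmasengorosa} established in the proof of Corollary \ref{cor0}, for any $\varphi_T\in L^2(\Omega)$ and for the backward solution $\varphi$ of \eqref{E:unadelasecuacionesmejoresuqeconozco} on $[0,T^2_M]$,
\begin{equation*}
\int_\Omega u(x,T^2_M;g_n)\varphi_T\,dx=\int_\Omega u_0\,\varphi(0)\,dx-\int_{\partial\Omega\times(0,T^2_M)} g_n\,\tfrac{\partial\varphi}{\partial\nu}\,d\sigma dt+\mathrm{o}(1),
\end{equation*}
where the $\mathrm{o}(1)$ accounts for the difference between evaluating at $t_n$ versus $T^2_M$ (which vanishes since $t_n\to T^2_M$ and $t\mapsto u(\cdot,t;g_n)$ is continuous in $L^2(\Omega)$, with uniform modulus coming from the $L^\infty$-bound on $g_n$ and the $L^2$ trace theory of \cite{FabesSalsa1}). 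Since $\partial\varphi/\partial\nu\in L^2(\partial\Omega\times(0,T^2_M))\subset L^1$ (see \eqref{E: acojono}--\eqref{E: supernecesariamente} in the appendix), the weak-$*$ convergence of $g_n$ against this $L^1$ test function yields
\begin{equation*}
\int_\Omega u(x,T^2_M;g^*)\varphi_T\,dx=\int_\Omega u_0\,\varphi(0)\,dx-\int_{\partial\Omega\times(0,T^2_M)} g^*\,\tfrac{\partial\varphi}{\partial\nu}\,d\sigma dt=0,
\end{equation*}
the last equality because $u(\cdot,t_n;g_n)=0$. As $\varphi_T$ is arbitrary in $L^2(\Omega)$, $u(\cdot,T^2_M;g^*)=0$, so $g^*$ is an optimal control.

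The main obstacle is the passage to the limit under weak-$*$ convergence of boundary data: one needs the normal derivative $\partial\varphi/\partial\nu$ of the adjoint solution to belong to $L^1(\partial\Omega\times(0,T^2_M))$ (in fact $L^2$), which on a merely Lipschitz domain is the nontrivial ingredient. This is exactly the content of the boundary regularity estimates invoked in the proof of Corollary \ref{cor0} via \cite{FabesSalsa1,R1}, so the argument reduces to applying those estimates here.
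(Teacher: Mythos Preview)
Your compactness skeleton is fine, but the passage to the limit has a real gap. You invoke the duality identity \eqref{E: vayasecuaciioonesmasengorosa} together with the claim that $\tfrac{\partial\varphi}{\partial\nu}\in L^2(\partial\Omega\times(0,T^2_M))$ for arbitrary $\varphi_T\in L^2(\Omega)$. Neither is available here. The identity \eqref{E: vayasecuaciioonesmasengorosa} is proved in the Appendix only for boundary data supported in $\partial\Omega\times[2\eta,T-2\eta]$ for some $\eta>0$; the separation from the terminal time is used in an essential way, since the only bound on the adjoint normal derivative is \eqref{E: acojono}, namely $\|(\nabla\varphi)^\ast\|_{L^\infty(0,T-\delta;L^{2+\e}(\partial\Omega))}\le Ne^{1/\delta}\|\varphi_T\|_{L^2(\Omega)}$, which blows up as $\delta\to 0$. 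For generic $\varphi_T\in L^2(\Omega)$ one cannot assert $\tfrac{\partial\varphi}{\partial\nu}\in L^1(\partial\Omega\times(0,T^2_M))$, so there is no test function against which the weak-$*$ convergence of $g_n$ can be played on the full interval. In the time-optimal problem the controls $g_n$ are active right up to $t_n\searrow T^2_M$, so you cannot retreat to a smaller time window without losing the information $u(t_n;g_n)=0$.

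The paper sidesteps this by working pointwise in $x\in\Omega$ instead of weakly in $L^2(\Omega)$. It uses the Green's function representation \eqref{ZHANG7} together with the kernel bound \eqref{E: controldelcuadrado}, which says $\tfrac{\partial G}{\partial\nu_q}(x,\cdot,\cdot)\in L^2(\partial\Omega\times(0,T))$ for each fixed interior point $x$. Weak-$*$ convergence of $g_n$ in $L^\infty$ against this $L^2$ kernel gives $u(x,T^2_M;g_n)\to u(x,T^2_M;g^*)$, while interior parabolic regularity \eqref{ZHANG8} (Lipschitz in $t$ at distance $\ge\sqrt\e$ from $\partial\Omega$, with constant uniform in $n$) handles the shift $u(x,t_n;g_n)\to u(x,T^2_M;g_n)$. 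This yields $u(x,T^2_M;g^*)=0$ for every $x\in\Omega$. If you want to rescue your duality route, you would need to restrict first to $\varphi_T$ smooth enough that $\tfrac{\partial\varphi}{\partial\nu}$ stays bounded up to $t=T^2_M$, prove a version of \eqref{E: vayasecuaciioonesmasengorosa} in that setting for controls active up to the terminal time, and then argue by density; but that is considerably more work than the paper's direct pointwise argument.
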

\begin{proof}
Let $\{t_n\}_{n\geq 1}$, with $t_n\searrow T_M^2$, and $g_n\in \mathcal{U}^2_M$ be such that $u(x, t_n; g_n)=0$ over $\Omega$.
 Hence, on a subsequence,
\begin{equation}\label{ZHANG000}
g_n\longrightarrow g^*\;\;\mbox{weakly star in}\;\; L^\infty(\partial\Omega\times(0,t_1)).
\end{equation}
It suffices to show that
\begin{equation}\label{ZHANG00}
u_n(x,t_n)\equiv u(x,t_n; g_n)\longrightarrow u^*(x,T_M^2)\equiv u(x, T_M^2; g^*),\ \mbox{for all}\ x\in \Omega.
\end{equation}

For this purpose, let  $G(x,y,t)$ be the Green's function for $\triangle-\partial_t$ in $\Omega\times\R$ with zero lateral Dirichlet boundary condition. \cite[Theorems 1.3 and 1.4]{FabesSalsa1} and \cite[p. 643]{FabesSalsa1} show that for $g\in \mathcal{U}^2_M$ and $(x,t)\in\Omega\times (0,T)$,
\begin{equation}\label{ZHANG7}
u(x,t ;g)=e^{t\bigtriangleup}u_0- \int_{0}^{t}\int_{\partial\Omega}\tfrac{\partial G}{\partial\nu_q}(x,q,t-s)\,\chi_{\Gamma}(q, s)g(q,s)\,d\sigma_{q}ds
\end{equation}
and
\begin{equation}\label{E: controldelcuadrado}
\int_0^T\int_{\partial\Omega}|\tfrac{\partial G}{\partial\nu_q}(x,q,\tau)|^2\,d\sigma_q d\tau< +\infty,\ \text{when}\ x\in\Omega,\ T>0.
\end{equation}
 Also, by standard interior parabolic regularity there is $N=N(n, \e)$ with
\begin{equation}\label{ZHANG8}
|u(x,t;g)-u(x,s;g)|\le N|t-s|\left(\|g\|_{L^\infty(\partial\Omega\times (0,T))}+\|u_0\|_{L^2(\Omega)}\right)
\end{equation}
when $d(x,\partial\Omega)>\sqrt\e$ and $t>s\ge \e$. Now, when $x\in \Omega$ with  $d(x,\partial\Omega)>\sqrt\e$, it holds that
$$
|u_n(x,t_n)-u^*(x,T_M^2)|\leq |u_n(x,t_n)-u_n(x,T_M^2)|+ |u_n(x,T_M^2)-u^*(x,T_M^2)|.
$$
This, along with \eqref{ZHANG000}, \eqref{ZHANG7}, \eqref{E: controldelcuadrado} and \eqref{ZHANG8} indicates that \eqref{ZHANG00} holds for all $x\in\Omega$ with $d(x,\partial\Omega)>\sqrt\e$. Since $\e>0$ is arbitrary, \eqref{ZHANG00} follows at once.
\end{proof}
Now, one can use the same methods as those in \cite{gengshengwang1}, as well as in Lemma~\ref{ZHANG4},  to get the following consequences of Theorems \ref{T:5carcajada} and \ref{boundary observability} respectively:
\begin{corollary}\label{cor2}
Problem $(TP)^1_M$ has the bang-bang property: any minimal time control $v$ satisfies that $|v(x,t)|=M$ for a.e. $(x,t)\in \omega\times (0, T^1_M)$. Consequently, this problem has a
unique minimal time control.
\end{corollary}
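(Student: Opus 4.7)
The plan is to combine Pontryagin's maximum principle for $(TP)^1_M$ with the interior real-analyticity of solutions of the heat equation and with the propagation of smallness from a measurable set (Theorem \ref{T:3}), in order to rule out vanishing of the adjoint state on any subset of $\omega\times(0,T_M^1)$ of positive measure.

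First I would show that any minimal-time control $v^*$ satisfies a maximum condition. Writing $T^*=T_M^1$, the reachable set $R(T)=\{u(T;u_0,v):v\in\mathcal{U}^1_M\}$ is convex (since $\mathcal{U}^1_M$ is convex and $v\mapsto u(T;u_0,v)$ is affine). Because $0\in R(T^*)$ while $0\notin R(T)$ for $T<T^*$, a continuity-in-$T$ argument places $0$ on the boundary of $R(T^*)$, and Hahn--Banach separation supplies a nonzero $\varphi_T\in L^2(\Omega)$ with $\int_\Omega\varphi_T\,y\,dx\ge 0$ for every $y\in R(T^*)$. Writing $\varphi(t)=e^{(T^*-t)\Delta}\varphi_T$ and applying the usual duality between forward and backward heat equations, the inequality at $y=u(T^*;u_0,v)$ becomes
\begin{equation*}
\int_0^{T^*}\!\!\int_\omega (v-v^*)\,\varphi\,dx\,dt\ge 0\quad\text{for every }v\in\mathcal{U}^1_M.
\end{equation*}
Pointwise extremization then forces $v^*(x,t)\varphi(x,t)=-M\,|\varphi(x,t)|$, and in particular $|v^*(x,t)|=M$, for a.e.\ $(x,t)\in\omega\times(0,T^*)$ with $\varphi(x,t)\neq 0$.

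Second I would show that $\mathcal{Z}=\{(x,t)\in\omega\times(0,T^*):\varphi(x,t)=0\}$ has Lebesgue measure zero. Arguing by contradiction, suppose $|\mathcal{Z}|>0$. Fubini then produces a positive-measure set $E\subset(0,T^*)$ for which $|\mathcal{Z}_t|>0$ at every $t\in E$, where $\mathcal{Z}_t=\{x\in\omega:\varphi(x,t)=0\}$. For each $t<T^*$, $\varphi(\cdot,t)=e^{(T^*-t)\Delta}\varphi_T$ solves the heat equation in $\Omega$ and hence is real-analytic on $B_{2R}(x_0)\subset\Omega$ by interior regularity; since $\mathcal{Z}_t\subset\omega\subset B_R(x_0)$ has positive measure, Theorem \ref{T:3} applied to $\varphi(\cdot,t)$ with the vanishing set $\mathcal{Z}_t$ yields $\varphi(\cdot,t)\equiv 0$ on $B_R(x_0)$. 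Interior analyticity of $\varphi(\cdot,t)$ on the connected open set $\Omega$ propagates this to $\varphi(\cdot,t)\equiv 0$ on all of $\Omega$, and the injectivity of $e^{(T^*-t)\Delta}$ on $L^2(\Omega)$ (immediate from its spectral decomposition) forces $\varphi_T=0$, contradicting the choice of $\varphi_T$.

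These two steps together give $|v^*|=M$ a.e.\ on $\omega\times(0,T^*)$. For uniqueness, if $v_1^*,v_2^*\in\mathcal{U}^1_M$ were both minimal-time controls, then $\bar v=(v_1^*+v_2^*)/2\in\mathcal{U}^1_M$ is also minimal-time by linearity of the state equation, so the bang-bang property applied to $\bar v$ gives $|\bar v|=M$ a.e.; combined with $|v_i^*|\le M$ this forces $v_1^*=v_2^*$ a.e.\ on $\omega\times(0,T^*)$. The main obstacle I expect is the clean setup of the maximum principle, namely the justification of $0\in\partial R(T^*)$ and the duality/integration-by-parts identity for an $L^\infty$ control against an adjoint defined only through the semigroup; both are classical but require some care. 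Once these are in place, the analyticity and unique-continuation step is a direct application of tools already developed in the paper.
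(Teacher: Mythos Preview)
Your proof is correct and shares the overall strategy the paper defers to \cite{gengshengwang1}: establish the Pontryagin maximum principle to obtain a nonzero adjoint $\varphi(t)=e^{(T^*-t)\Delta}\varphi_T$, then show that $\varphi$ cannot vanish on a subset of $\omega\times(0,T^*)$ of positive measure. The difference lies in the unique-continuation step. The paper presents Corollary~\ref{cor2} explicitly as an application of Theorem~\ref{T:5carcajada}: if $\varphi\equiv 0$ on a measurable set $\mathcal Z\subset\omega\times(0,T^*)$ with $|\mathcal Z|>0$, then applying \eqref{E: observabilidadparabolica} to the time-reversed adjoint with $\mathcal D=\mathcal Z$ gives $\|e^{T^*\Delta}\varphi_T\|_{L^2(\Omega)}=0$, and injectivity of the semigroup yields $\varphi_T=0$. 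Your route instead slices in time via Fubini, invokes the spatial real-analyticity of $\varphi(\cdot,t)$ together with Theorem~\ref{T:3} to force $\varphi(\cdot,t)\equiv 0$ on $B_R(x_0)$, and then propagates by analyticity and backward uniqueness. Both arguments are valid; yours is more elementary in that it uses only the qualitative propagation of smallness rather than the full quantitative observability estimate, but it does not exhibit the corollary as a consequence of the paper's main theorem, which is the intended point of placing it in Section~\ref{S:3}.
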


\begin{corollary}\label{cor4}
The problem $(TP)^2_M$ has the bang-bang property: any minimal time boundary control $g$ satisfies that $|g(x,t)|=M$ for a.e. $(x,t)\in \Gamma\times (0, T^2_M)$. Consequently, this problem has a
unique minimal time control.
\end{corollary}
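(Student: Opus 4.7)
The plan is to adapt to the boundary setting the strategy from \cite{gengshengwang1}, replacing its interior observability ingredient by Theorem \ref{boundary observability}. Existence of a minimal time control is already ensured by Lemma \ref{ZHANG4}, so what remains is to (i) represent any optimal $g^*$ through the normal derivative of a suitable adjoint solution, and (ii) use the boundary observability inequality to preclude vanishing of this normal derivative on a positive surface-measure subset of $\Gamma\times(0,T_M^2)$.

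First I would set $T^*=T_M^2$ and establish a Pontryagin-type maximum principle. Let $\mathcal R=\{u(T^*;g):g\in\mathcal U^2_M\}\subset L^2(\Omega)$; this is convex and by definition of $T^*$ the origin $0$ cannot be an interior point of $\mathcal R$, for otherwise a continuity argument in $T$ (using the representation \eqref{ZHANG7} and the bound \eqref{E: controldelcuadrado}) would yield a control steering $u_0$ to $0$ in a time strictly less than $T^*$. A Hahn--Banach separation of $\mathcal R$ and $\{0\}$ then produces a nonzero $\varphi_T\in L^2(\Omega)$ with $(u(T^*;g),\varphi_T)\le 0$ for every $g\in\mathcal U^2_M$ and equality at $g=g^*$. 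Setting $\varphi(t)=e^{(T^*-t)\Delta}\varphi_T$ and invoking the duality identity of the type proved in the appendix and used in Corollary \ref{cor0}, this rewrites as
$$\int_0^{T^*}\!\!\int_{\Gamma}(g^*(q,t)-g(q,t))\,\tfrac{\partial\varphi}{\partial\nu}(q,t)\,d\sigma dt\le 0,\qquad g\in\mathcal U^2_M,$$
and pointwise maximization in $g$ forces $|g^*(q,t)|=M$ at almost every point of $\Gamma\times(0,T^*)$ where $\tfrac{\partial\varphi}{\partial\nu}\ne 0$.

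The crux is then to show that $\tfrac{\partial\varphi}{\partial\nu}$ cannot vanish on any subset $\mathcal J'\subset\Gamma\times(0,T^*)$ of positive surface measure. If it did, then, writing $\psi(s)=e^{s\Delta}\varphi_T$ and $\widetilde{\mathcal J'}=\{(x,s):(x,T^*-s)\in\mathcal J'\}$, one has $\tfrac{\partial\psi}{\partial\nu}=0$ on $\widetilde{\mathcal J'}$ and $\widetilde{\mathcal J'}\subset\triangle_R(q_0)\times(0,T^*)$ with $\triangle_{4R}(q_0)$ real-analytic. Theorem \ref{boundary observability} applied to $\psi$ and the set $\widetilde{\mathcal J'}$ therefore yields
$$\|e^{T^*\Delta}\varphi_T\|_{L^2(\Omega)}\le e^{B}\int_{\widetilde{\mathcal J'}}\bigl|\tfrac{\partial\psi}{\partial\nu}\bigr|\,d\sigma dt=0,$$
and injectivity of $e^{T^*\Delta}$ on $L^2(\Omega)$ forces $\varphi_T=0$, contradicting the separation step. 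Consequently $|g^*|\equiv M$ almost everywhere on $\Gamma\times(0,T^*)$.

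Uniqueness follows from convexity: two distinct minimal time controls $g_1^*,g_2^*$ would give $\tfrac12(g_1^*+g_2^*)\in\mathcal U^2_M$ driving $u_0$ to $0$ at time $T^*$, hence also optimal, hence bang-bang by the preceding argument; combined with $|g_i^*|=M$ this forces $g_1^*=g_2^*$. The step I expect to be most delicate is the duality identity converting $(u(T^*;g),\varphi_T)$ into the boundary integral of $g\,\tfrac{\partial\varphi}{\partial\nu}$: because $\Omega$ is only Lipschitz and $g$ is merely $L^\infty$, one cannot integrate by parts naively and must rely on the same regularization of $g$ together with smooth-domain approximation scheme as in the appendix supporting Corollary \ref{cor0}, which I would invoke rather than reprove here.
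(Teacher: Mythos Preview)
Your proposal is correct and follows exactly the route the paper indicates: the paper gives no detailed argument for Corollary~\ref{cor4}, merely pointing to the methods of \cite{gengshengwang1} combined with Lemma~\ref{ZHANG4} and Theorem~\ref{boundary observability}, and your maximum-principle sketch is the standard way to flesh this out. The decisive new ingredient---that $\tfrac{\partial\varphi}{\partial\nu}$ cannot vanish on any subset of $\Gamma\times(0,T_M^2)$ of positive surface measure, which you obtain from Theorem~\ref{boundary observability} applied to the time-reversed adjoint---is precisely what the paper supplies, and you invoke it correctly.
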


Next, we make use of  Theorems~\ref{T:5carcajada} and \ref{boundary observability} to study the bang-bang property for the time optimal control problems where the interest is on retarding the initial time of the action of a control with bounded $L^\infty$-norm. These problems are usually called the second type of time optimal control problems and are
stated as follows: Let $T>0$ and $M>0$.
 Write  $\omega$ and $\Gamma$ for the sets given in
 Problems $(TP)_M^1$ and $(TP)_M^2$ respectively. Consider the controlled heat equations:
\begin{equation}\label{ZHANG1}
\begin{cases}
\partial_tu- \Delta u=\chi_{\omega}\chi_{(\tau,T)}v, &\text{in}\ \Omega\times (0,T],\\
u=0, &\text{on}\ \partial\Omega\times [0,T],\\
u(0)=u_0,\ &\text{in}\ \Omega
\end{cases}
\end{equation}
and
\begin{equation}\label{ZHANG2}
\begin{cases}
\partial_tu-\Delta u=0, &\text{in}\ \Omega\times (0,T],\\
u=\chi_{\Gamma}\chi_{(\tau,T)}g, &\text{on}\ \partial\Omega\times [0,T],\\
u(0)=u_0,\ &\text{in}\ \Omega,
\end{cases}
\end{equation}
where $u_0\in L^2(\Omega)$. Write accordingly  $u(\cdot \,; \chi_{(\tau,T)}v)$ and $u(\cdot\, ; \chi_{(\tau,T)}g)$ for  the solutions  to equation (\ref{ZHANG1})
corresponding to $\chi_{(\tau,T)}v$, and to equation (\ref{ZHANG2}) corresponding to $\chi_{(\tau,T)}g$.
Define the following control constraint sets:
$$\mathcal{U}^1_{M,T}=\{v\ \text{measurable on}\ \om\times(0,T): |v(x,t)|\leq M\ \text{for a.e.}\ (x,t)\in\om\times(0,T)\}.$$
$$\mathcal{U}^2_{M,T}=\{g\ \text{measurable on}\ \partial\om\times(0,T): |g(x,t)|\leq M\ \text{for a.e.}\ (x,t)\in\partial\om\times(0,T)\}.$$
Consider the time optimal control problems:
$$(TP)^1_{M,T}:\ \tau^1_{M,T}\equiv \sup_{v\in\mathcal{U}^1_{M,T}}
\left\{\tau\in[0,T):\;\;u(T; \chi_{(\tau,T)}v)=0\right\}$$
and
$$(TP)^2_{M,T}:\;\; \tau^2_{M,T}\equiv \sup_{g\in\mathcal{U}^2_{M,T}}
\left\{\tau\in[0,T):\;\;u(T; \chi_{(\tau,T)}g)=0\right\}.$$
Any solution of $(TP)_{T,M}^i$, $i=1,2$, is called an optimal control to the corresponding problem.

Now, we can use  the same arguments as those in the proof of Theorem 3.4 in \cite{PhungWang1} to get the following consequences
of Theorem~\ref{T:5carcajada} and Theorem~\ref{boundary observability} respectively:

\begin{corollary}\label{cor5} Any optimal control $v^*$ to Problem $(TP)^1_{M,T}$, if it exists, satisfies the bang-bang property: $|v^*(x,t)|=M$ for a.e. $(x,t)\in \omega\times(\tau^1_{M,T}, T)$.
\end{corollary}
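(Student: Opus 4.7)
The plan is to combine a Pontryagin-type maximum principle for the time optimal control problem with the interior observability estimate of Theorem~\ref{T:5carcajada}. Let $v^*$ be an optimal control and set $\tau^*=\tau^1_{M,T}$. I argue by contradiction: assume there is a measurable set $\mathcal D\subset\omega\times(\tau^*,T)$ with $|\mathcal D|>0$ on which $|v^*(x,t)|<M$. The goal is to produce a nonzero terminal datum $\varphi_T\in L^2(\Omega)$ whose associated adjoint state must vanish on $\mathcal D$, and then to invoke Theorem~\ref{T:5carcajada} to force $\varphi_T=0$.

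For the maximum principle step, consider the reachable set
\begin{equation*}
K_\tau=\Bigl\{e^{T\Delta}u_0+\int_\tau^T e^{(T-s)\Delta}(\chi_\omega v(s))\,ds\, :\, v\in\mathcal U^1_{M,T}\Bigr\}\subset L^2(\Omega),
\end{equation*}
which is convex and weakly compact. By optimality of $v^*$ one has $0\in K_{\tau^*}$, while $0\notin K_\tau$ for every $\tau>\tau^*$; the Hausdorff continuity of $\tau\mapsto K_\tau$ then forces $0\in\partial K_{\tau^*}$. By the Hahn-Banach separation theorem, there exists $\varphi_T\in L^2(\Omega)\setminus\{0\}$ with $(\varphi_T,k)\ge 0$ for all $k\in K_{\tau^*}$. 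Setting $\varphi(x,t)=(e^{(T-t)\Delta}\varphi_T)(x)$ and comparing the evaluation on a generic control with the zero element attained by $v^*$, this becomes
\begin{equation*}
\int_{\tau^*}^T\!\!\int_\omega (v-v^*)\,\varphi\,dx\,dt\ge 0,\quad \text{for all } v\in\mathcal U^1_{M,T}.
\end{equation*}
A pointwise minimization in $v$ yields $v^*(x,t)=-M\sgn(\varphi(x,t))$ wherever $\varphi(x,t)\ne 0$, so the hypothesis $|v^*|<M$ on $\mathcal D$ forces $\varphi\equiv 0$ a.e.\ on $\mathcal D$.

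For the observability step, reverse time via $\psi(\tau,x)=(e^{\tau\Delta}\varphi_T)(x)$, so that $\varphi(x,t)=\psi(T-t,x)$. The image $\widetilde{\mathcal D}=\{(x,\tau) : (x,T-\tau)\in\mathcal D\}$ is contained in $B_R(x_0)\times(0,T-\tau^*)$ and has positive Lebesgue measure. Since $\psi$ vanishes a.e.\ on $\widetilde{\mathcal D}$, Theorem~\ref{T:5carcajada}, applied with time horizon $T-\tau^*$ and observation set $\widetilde{\mathcal D}$, yields
\begin{equation*}
\|e^{(T-\tau^*)\Delta}\varphi_T\|_{L^2(\Omega)}\le e^B\int_{\widetilde{\mathcal D}}|\psi(\tau,x)|\,dx\,d\tau=0,
\end{equation*}
and the injectivity of $e^{(T-\tau^*)\Delta}$ on $L^2(\Omega)$ for $T-\tau^*>0$ forces $\varphi_T=0$, a contradiction.

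The main obstacle is establishing rigorously that $0\in\partial K_{\tau^*}$, which hinges on the continuity of $\tau\mapsto K_\tau$ in Hausdorff distance used to rule out $0$ being in the interior of $K_{\tau^*}$. This is a classical but slightly technical step, and the remaining bookkeeping closely parallels the strategy of \cite[Theorem 3.4]{PhungWang1}, whose arguments transfer verbatim once the observability inequality of Theorem~\ref{T:5carcajada} is in hand.
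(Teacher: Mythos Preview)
Your argument is correct and follows precisely the route the paper points to: the paper gives no proof of its own for this corollary but simply invokes the method of \cite[Theorem 3.4]{PhungWang1}, which is exactly the Pontryagin/separation scheme you have written out, with Theorem~\ref{T:5carcajada} supplying the observability step that forces the adjoint state to vanish. Your identification of the only nontrivial technical point---the Hausdorff continuity of $\tau\mapsto K_\tau$ needed to place $0$ on the boundary of $K_{\tau^*}$---is accurate and is handled in the cited reference.
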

\begin{corollary}\label{cor6} Any optimal control $g^*$ to Problem  $(TP)^2_{M,T}$, if it exists, satisfies the bang-bang property: $|g^*(x,t)|=M$ for a.e. $(x,t)\in \Gamma\times(\tau^2_{M,T}, T)$.
\end{corollary}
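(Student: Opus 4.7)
The plan is to mimic the argument of \cite[Theorem 3.4]{PhungWang1}, substituting Theorem~\ref{boundary observability} for the interior observability used there. I will proceed by contradiction: suppose $g^*$ is an optimal control for $(TP)^2_{M,T}$ with optimal time $\tau^*:=\tau^2_{M,T}$, but the bang-bang property fails. Then there exist $\delta>0$ and a measurable set $F\subset \Gamma\times (\tau^*,T)$ of positive surface measure on which $|g^*|\le M-\delta$. The target is to produce, for some $\epsilon>0$, an admissible control $\widetilde{g}\in\mathcal{U}^2_{M,T}$ whose support lies in $\partial\Omega\times (\tau^*+\epsilon,T)$ and which still satisfies $u(T;\chi_{(\tau^*+\epsilon,T)}\widetilde{g})=0$, contradicting the maximality of $\tau^*$.

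Before letting $\epsilon$ vary, I would fix once and for all a subset $F_0\subset F$ bounded away from the endpoint $\{t=\tau^*\}$, of the form $F_0\subset \triangle_R(q_0)\times (\tau^*+\epsilon_0,T)$ for some $\epsilon_0>0$, with $|F_0|>0$. Theorem~\ref{boundary observability} applied to $F_0$ yields a single observability constant $e^{B_0}$ that does not depend on $\epsilon$. For $0<\epsilon<\epsilon_0$, I set $\widetilde{g}:=g^*\chi_{(\tau^*+\epsilon,T)}+h$, where $h\in L^\infty(F_0)$ is a correction to be determined. Linearity of the boundary-value problem reduces the requirement $u(T;\widetilde{g})=0$ to $L(h)=\phi_\epsilon$, where $L$ is the boundary-data-to-final-state map restricted to controls supported in $F_0$, and $\phi_\epsilon:=u(T;0,g^*\chi_{(\tau^*,\tau^*+\epsilon)})\in L^2(\Omega)$. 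Applying the Hahn--Banach duality from the proof of Corollary~\ref{cor0} with the observability inequality on $F_0$ produces such an $h$ with $\|h\|_{L^\infty(F_0)}\le e^{B_0}\|\phi_\epsilon\|_{L^2(\Omega)}$.

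To close the argument, I would verify that $\|\phi_\epsilon\|_{L^2(\Omega)}\to 0$ as $\epsilon\to 0$. Granting this, one picks $\epsilon$ so small that $\|h\|_\infty<\delta$; then $|\widetilde{g}|\le (M-\delta)+\delta=M$ on $F_0$ while $|\widetilde{g}|=|g^*|\le M$ off $F_0$, so $\widetilde{g}\in \mathcal{U}^2_{M,T}$ and the contradiction is reached. Uniqueness of the optimal control then follows by noting that if $g^*_1\neq g^*_2$ were two optima, the convex combination $\tfrac{1}{2}(g^*_1+g^*_2)$ would also be optimal but would fail the bang-bang property on the set where $g^*_1\neq g^*_2$, contradicting the conclusion already established.

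The main obstacle I expect is the continuity estimate $\|\phi_\epsilon\|_{L^2(\Omega)}\to 0$. Intuitively this holds because $g^*\chi_{(\tau^*,\tau^*+\epsilon)}$ has a fixed $L^\infty$ bound $M$ but surface support whose measure tends to zero with $\epsilon$; rigorously, however, one needs the continuity properties of the boundary-data-to-state map from \cite[Theorem~3.2]{FabesSalsa1}, together with the regularization and smooth-domain approximation device already carried out in Appendix~\ref{S:8} for the justification of Corollary~\ref{cor0}. With that in hand the construction closes.
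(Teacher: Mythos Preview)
Your outline is essentially the same as the paper's intended argument: the paper simply refers the reader to \cite[Theorem 3.4]{PhungWang1} and says Corollary~\ref{cor6} follows from Theorem~\ref{boundary observability} by the same reasoning, so there is no detailed proof in the paper to compare against. Your contradiction scheme---free up some room on a set $F_0$ where $|g^*|\le M-\delta$, cut off the control on $(\tau^*,\tau^*+\epsilon)$, and repair the resulting small error using a boundary null control supported in $F_0$---is exactly the expected transposition of the Phung--Wang argument to the boundary setting.

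One point deserves tightening. The Hahn--Banach argument from Corollary~\ref{cor0} produces, for each $w\in L^2(\Omega)$, a control $h\in L^\infty(F_0)$ with $\|h\|_\infty\le e^{B_0}\|w\|_{L^2(\Omega)}$ driving the state $w$ (placed at the initial time of the control window, say $\tau^*+\epsilon_0$) to zero at $T$. It does \emph{not} directly give a bound of the form $\|h\|_\infty\le e^{B_0}\|\phi_\epsilon\|_{L^2(\Omega)}$ for steering $0$ to $\phi_\epsilon$; the range of the control-to-state map need not contain an arbitrary $L^2$ target. What makes the step work is that $\phi_\epsilon=e^{(T-\tau^*-\epsilon_0)\Delta}v_\epsilon(\tau^*+\epsilon_0)$, where $v_\epsilon$ is the solution with zero initial data and boundary data $g^*\chi_\Gamma\chi_{(\tau^*,\tau^*+\epsilon)}$; hence one should take $w=-v_\epsilon(\tau^*+\epsilon_0)$ and obtain $\|h\|_\infty\le e^{B_0}\|v_\epsilon(\tau^*+\epsilon_0)\|_{L^2(\Omega)}$. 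The required smallness is then $\|v_\epsilon(\tau^*+\epsilon_0)\|_{L^2(\Omega)}\to 0$, which follows (as you indicate) from the maximum principle bound $|v_\epsilon|\le M$, pointwise convergence via the representation formula \eqref{ZHANG7}--\eqref{E: controldelcuadrado}, and dominated convergence. With this correction the argument closes. Finally, note that Corollary~\ref{cor6} as stated does not assert uniqueness; your convexity argument for uniqueness is correct but goes beyond what is claimed.
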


\begin{remark}\label{ZHANG3} {\it By Theorem~\ref{T:5carcajada} (See also Remark \ref{R: 3}) and the energy decay property for the heat equation, one can easily prove the following:  for a fixed $M>0$, there is $v\in \mathcal{U}^1_{M,T}$
 such that $u(T;\chi_{(0, T)}v)=0$,  when $T$ is large enough (such a control $v$ is called an admissible control); while for a fixed $T>0$, the same holds when $M$ is large enough.
 The same can be said about  Problem $(TP)^2_{M,T}$ because of Theorem~\ref{boundary observability} (See also Remark~\ref{R: 20}).
In the case where Problem $(TP)^1_{M,T}$ has admissible controls, one can easily prove the existence of time optimal controls to this problem. In the case when Problem $(TP)^2_{M,T}$ has admissible controls, one can make use of the similar method in the proof of Lemma~\ref{ZHANG4} to verify the existence of time optimal controls for this problem.}
\end{remark}

Finally, we utilize  Theorem~\ref{T:5carcajada} and   Theorem~\ref{boundary observability} to study the bang-bang property for the minimal norm control problems, which are stated as follows: Let $\mathcal{D}$ and  $\mathcal{J}$ be the subsets given at the beginning of this section.
Let  $u_0\in L^2(\Omega)$. Define two control constraint sets as follows:
$$\mathcal{V}_{\mathcal{D}}=\left\{v\in L^\infty (\Omega\times(0,T)): u(T; u_0, v)=0 \right\}$$
and
$$\mathcal{V}_{\mathcal{J}}=\left\{g\in L^\infty (\partial\Omega\times(0,T)):\;u(T;u_0,g)=0 \right\}.$$
Consider the minimal norm control problems:
$$(NP)_{\mathcal{D}}:\;\;\;\;M_{\mathcal{D}}\equiv \min\left\{\|v\|_{L^\infty(\Omega\times(0,T))}:\;\;
v\in\mathcal{V}_{\mathcal{D}}\right\}$$
and
$$(NP)_{\mathcal{J}}:\;\;\;\;M_{\mathcal{J}}\equiv \min\left\{\|g\|_{L^\infty(\partial\Omega\times(0,T))}:\;\;
g\in\mathcal{V}_{\mathcal{J}}\right\}.$$
Any solution of $(NP)_{\mathcal{D}}$ (or $(NP)_{\mathcal{J}}$) is called a minimal norm control to this problem.  According to Corollary~\ref{cor0}, the sets  $\mathcal{V}_{\mathcal{D}}$ and  $\mathcal{V}_{\mathcal{J}}$ are not empty. Since $\mathcal{V}_{\mathcal{D}}$ is not empty, it follows from  the standard arguments that Problem $(NP)_{\mathcal{D}}$ has solutions. Because $\mathcal{V}_{\mathcal{J}}$ is not empty,
by using the similar arguments as those in the proof of
Lemma~\ref{ZHANG4}, we can justify that Problem $(NP)_{\mathcal{J}}$
has solutions. Now, one can use the same methods as those in  \cite{PhungWang1}  to get the following consequences of Theorem \ref{T:5carcajada} and Theorem~\ref{boundary observability} respectively:
\begin{corollary}\label{cor1} Problem $(NP)_{\mathcal{D}}$ has the bang-bang property: any minimal norm control $v$ satisfies that $|v(x,t)|=M_{\mathcal{D}}$ for a.e. $(x,t)\in \mathcal{D}$. Consequently, this problem has a
unique minimal norm control.
\end{corollary}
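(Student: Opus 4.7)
The plan is to adapt the duality/variational strategy of \cite{PhungWang1}, whose sole analytic ingredient is the observability inequality \eqref{E: observabilidadparabolica}. Writing $\varphi(\cdot;z)(t)=e^{(T-t)\Delta}z$ for the solution of the backward adjoint heat equation with terminal state $z\in L^2(\Omega)$, the transposition identity
\[
\int_\Omega u(T;u_0,v)\,z\,dx=(u_0,\varphi(0;z))+\int_{\mathcal D}v\,\varphi(\cdot;z)\,dxdt,\qquad z\in L^2(\Omega),
\]
(justified by smooth approximation as in the proof of Corollary~\ref{cor0}) reduces the constraint $v\in\mathcal V_{\mathcal D}$ to a linear duality relation. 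I would then consider the continuous convex functional
\[
J(z)=\tfrac12\|\varphi(\cdot;z)\|_{L^1(\mathcal D)}^2+(u_0,\varphi(0;z)),\qquad z\in L^2(\Omega),
\]
whose coercivity follows by applying Theorem \ref{T:5carcajada} to the time-reversed problem, which gives $\|\varphi(0;z)\|_{L^2(\Omega)}\le e^B\|\varphi(\cdot;z)\|_{L^1(\mathcal D)}$ and hence $J(z)\ge\tfrac12\|\varphi\|_{L^1(\mathcal D)}^2-e^B\|u_0\|_{L^2(\Omega)}\|\varphi\|_{L^1(\mathcal D)}$. By the direct method, $J$ attains its minimum at some $z^*\in L^2(\Omega)$; the degenerate case $z^*=0$ is easily seen to force $u_0\equiv 0$, in which case the zero control is trivially minimal norm.

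Assuming $z^*\ne 0$ and writing $\varphi^*=\varphi(\cdot;z^*)$, the Euler equation for $J$ reads
\[
\|\varphi^*\|_{L^1(\mathcal D)}\int_{\mathcal D}\sgn(\varphi^*)\,\varphi(\cdot;z)\,dxdt+(u_0,\varphi(0;z))=0,\qquad z\in L^2(\Omega),
\]
which identifies $v^*=-\|\varphi^*\|_{L^1(\mathcal D)}\,\sgn(\varphi^*)\,\chi_{\mathcal D}$ as an element of $\mathcal V_{\mathcal D}$ of $L^\infty$-norm equal to $\|\varphi^*\|_{L^1(\mathcal D)}$. Testing $z=z^*$ in the Euler equation yields $(u_0,\varphi^*(0))=-\|\varphi^*\|_{L^1(\mathcal D)}^2$, and plugging $z=z^*$ into the transposition identity for any competing $v\in\mathcal V_{\mathcal D}$ together with H\"older's inequality gives
\[
\|\varphi^*\|_{L^1(\mathcal D)}^2=-(u_0,\varphi^*(0))=\int_{\mathcal D}v\,\varphi^*\,dxdt\le\|v\|_{L^\infty(\mathcal D)}\,\|\varphi^*\|_{L^1(\mathcal D)},
\]
so $v^*$ is a minimal norm control and $M_{\mathcal D}=\|\varphi^*\|_{L^1(\mathcal D)}$.

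Equality in the above H\"older step, which must hold for any minimal norm control $v$, forces $v(x,t)=-M_{\mathcal D}\,\sgn(\varphi^*(x,t))$ a.e.\ on $\{\varphi^*\ne 0\}\cap\mathcal D$; this simultaneously yields the bang-bang property $|v|=M_{\mathcal D}$ a.e.\ on $\mathcal D$ and the uniqueness of the minimal norm control, \emph{provided} $|\{\varphi^*=0\}\cap\mathcal D|=0$. The main and only delicate point is this unique continuation property, which I would establish as follows: if $|\{\varphi^*=0\}\cap\mathcal D|>0$, Fubini's theorem would produce a set $F\subset(0,T)$ of positive measure such that $|\{x\in B_R(x_0):\varphi^*(x,t)=0\}|>0$ for every $t\in F$; but $\varphi^*(\cdot,t)=e^{(T-t)\Delta}z^*$ is real-analytic in $\Omega$ for every $t<T$, hence would vanish identically on $\Omega$ for each $t\in F$, and the injectivity of the heat semigroup $e^{s\Delta}:L^2(\Omega)\to L^2(\Omega)$ for any $s>0$ would then give $z^*=0$, contradicting $z^*\ne 0$.
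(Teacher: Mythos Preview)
Your proposal is correct and follows precisely the route the paper intends: the paper simply defers to the methods of \cite{PhungWang1}, and you have faithfully reconstructed that variational/duality argument with Theorem~\ref{T:5carcajada} supplying the observability input. Your identification of the unique-continuation step $|\{\varphi^*=0\}\cap\mathcal D|=0$ as the crux, and your proof of it via space real-analyticity of $e^{(T-t)\Delta}z^*$, are exactly right.

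One technical point deserves care: $J$ is \emph{not} coercive on $L^2(\Omega)$, because the observability inequality only controls $\|\varphi(0;z)\|_{L^2(\Omega)}=\|e^{T\Delta}z\|_{L^2(\Omega)}$, not $\|z\|_{L^2(\Omega)}$ itself. A minimizing sequence is bounded only in the seminorm $z\mapsto\|\varphi(\cdot;z)\|_{L^1(\mathcal D)}$, and the infimum need not be attained by some $z^*\in L^2(\Omega)$. The standard fixes are either to minimize over the completion of $L^2(\Omega)$ in that seminorm (your unique-continuation argument survives, since the limiting $\varphi^*$ is still a genuine backward caloric function on $\Omega\times(0,T)$, real-analytic in $x$ for each $t<T$), or to bypass $J$ entirely via a direct contradiction: if a minimal norm control $v$ satisfied $|v|\le M_{\mathcal D}-\epsilon$ on some $A\subset\mathcal D$ with $|A|>0$, apply Corollary~\ref{cor0} with $\mathcal D$ replaced by $A$ to obtain $w$ supported in $A$ with $u(T;u_0,w)=0$; then $(1-\lambda)v+\lambda w\in\mathcal V_{\mathcal D}$ has $L^\infty$-norm strictly below $M_{\mathcal D}$ for small $\lambda>0$. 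Either patch completes your argument.
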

\begin{corollary}\label{cor3} The problem $(NP)_{\mathcal{J}}$ has the bang-bang property: any minimal norm boundary-control $g$ satisfies that $|g(x,t)|=M_{\mathcal{J}}$ for a.e. $(x,t)\in \mathcal{J}$. Consequently, this problem has a
unique minimal norm control.
\end{corollary}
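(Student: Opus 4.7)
The plan is to follow the variational duality framework developed in \cite{PhungWang1} for Corollary \ref{cor1}, substituting Theorem \ref{boundary observability} and the boundary duality identity \eqref{E: vayasecuaciioonesmasengorosa} in place of their interior analogues. I may assume $u_0\neq 0$, since otherwise $M_{\mathcal J}=0$ and the claim is vacuous. The existence of a minimal-norm control was already recorded in the paragraph preceding the corollary; what remains is to derive its pointwise structure and uniqueness.

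First I would introduce on $L^2(\om)$ the functional
\begin{equation*}
J(\varphi_T)=\tfrac12\Bigl(\int_{\mathcal J}\bigl|\tfrac{\p\varphi}{\p\nu}\bigr|\,d\s dt\Bigr)^2+(u_0,\varphi(0)),
\end{equation*}
where $\varphi$ solves the backward adjoint problem \eqref{E:unadelasecuacionesmejoresuqeconozco} with terminal datum $\varphi_T$. Theorem \ref{boundary observability} gives $\|\varphi(0)\|_{L^2(\om)}\le e^B\int_{\mathcal J}|\tfrac{\p\varphi}{\p\nu}|\,d\s dt$, so $J$ is continuous, convex and coercive, and therefore admits a minimizer $\varphi_T^*$. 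Writing the Euler-Lagrange condition for $J$ at $\varphi_T^*$ and testing against arbitrary adjoint states via the duality identity \eqref{E: vayasecuaciioonesmasengorosa}, the boundary function
\begin{equation*}
g^*=-M_{\mathcal J}\,\sgn\bigl(\tfrac{\p\varphi^*}{\p\nu}\bigr)\chi_{\mathcal J},\quad M_{\mathcal J}=\int_{\mathcal J}\bigl|\tfrac{\p\varphi^*}{\p\nu}\bigr|\,d\s dt,
\end{equation*}
is identified as a minimal-norm element of $\mathcal V_{\mathcal J}$; moreover, the same first-order condition forces every minimal-norm $g\in\mathcal V_{\mathcal J}$ to coincide almost everywhere with $g^*$ on $\{(q,t)\in\mathcal J:\tfrac{\p\varphi^*}{\p\nu}\neq 0\}$.

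The bang-bang property is then reduced to showing $\tfrac{\p\varphi^*}{\p\nu}\neq 0$ for almost every $(q,t)\in\mathcal J$. Since $\triangle_{4R}(q_0)$ is real-analytic, Lemma \ref{L: analiticidadcalorica} makes $\tfrac{\p\varphi^*}{\p\nu}$ real-analytic in $(q,t)$ on the real-analytic manifold $\triangle_{2R}(q_0)\times(0,T)$. If its zero set had positive surface measure inside $\mathcal J\subset\triangle_R(q_0)\times(0,T)$, analytic continuation would force $\tfrac{\p\varphi^*}{\p\nu}\equiv 0$ on $\triangle_{2R}(q_0)\times(0,T)$, and combined with $\varphi^*=0$ on $\p\om\times(0,T)$, Holmgren-type unique continuation for the heat equation would yield $\varphi^*\equiv 0$ in $\om\times(0,T)$. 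This, however, contradicts the minimality of $\varphi_T^*$: since $u_0\neq 0$ and the range of $\varphi_T\mapsto e^{T\Delta}\varphi_T$ is dense in $L^2(\om)$, the linear term $(u_0,\varphi(0))$ can be made strictly negative along some direction, so $J$ cannot attain its minimum at the zero state. Hence $|g^*|=M_{\mathcal J}$ almost everywhere on $\mathcal J$, and uniqueness of the minimal-norm control follows because its values are uniquely prescribed by $M_{\mathcal J}\sgn(\tfrac{\p\varphi^*}{\p\nu})$ where the latter is nonzero.

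The step I expect to be the main obstacle is precisely this analyticity-and-unique-continuation argument: the interior counterpart in \cite{PhungWang1} relies on real-analyticity of caloric functions in the space variable on compact subsets of $\om$, whereas here the crucial ingredient is the quantitative boundary real-analyticity supplied by Lemma \ref{L: analiticidadcalorica} together with Definition \ref{D: fromteralocalrealanalitica}, which is precisely why the real-analytic hypothesis on $\triangle_{4R}(q_0)$ enters both Theorem \ref{boundary observability} and Corollary \ref{cor3}.
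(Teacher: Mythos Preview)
Your proposal is correct and follows the variational duality approach that the paper defers to \cite{PhungWang1}, substituting Theorem \ref{boundary observability}, the duality identity \eqref{E: vayasecuaciioonesmasengorosa}, and the boundary real-analyticity of Lemma \ref{L: analiticidadcalorica} for their interior counterparts exactly as required. The only slip is the word ``coercive'': $J$ is merely bounded below on $L^2(\Omega)$, and the minimizer must be sought in the completion of $L^2(\Omega)$ with respect to the seminorm $\varphi_T\mapsto\int_{\mathcal J}|\tfrac{\partial\varphi}{\partial\nu}|\,d\sigma dt$ (a norm, by Theorem \ref{boundary observability}), but this is the standard technicality already present in the interior case of \cite{PhungWang1}.
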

\noindent {\it Acknowledgement}: The authors want to thank G. Verchota for showing them that not all the polygons or polyhedrons are Lipschitz domains.
\vspace{1. cm}
\section{Appendix}\label{S:8}
\begin{proof}[Proof of (\ref{E: vayasecuaciioonesmasengorosa})] For each $(p,\tau)\in\partial\Omega\times \R$ and fixed $\xi>0$,  we define
\begin{equation*}
\Gamma(p)=\{x\in\Omega: |x-p|\le\left(1+\xi\right)d(x,\partial\Omega)\},
\end{equation*}
\begin{equation*}
\Gamma(p,\tau)=\{(x,t)\in\Omega\times (0,T): |x-p|+\sqrt{|t-\tau|}\le\left(1+\xi\right)d(x,\partial\Omega)\}.
\end{equation*}
The later  are called respectively elliptic and parabolic non-tangential approach regions from the interior of $\Omega\times (0,T)$ to $(p,\tau)$. In particular,
\begin{equation*}
\Gamma(p)\times\{\tau\}\subset\Gamma(p,\tau),\ \text{for all}\  (p,\tau)\in\partial\Omega\times (0,T).
\end{equation*}
When $u:\Omega\longrightarrow\R$ or $u:\Omega\times (0,T)\longrightarrow\mathbb{R}$ (or $\mathbb{R}^n$), define the elliptic and parabolic non-tangential maximal function of $u$ in $\partial\Omega\times (0,T)$ as
\begin{equation*}
u^\ast(p)=\sup_{x\in\Gamma(p)}|u(x)|,\quad u^\sharp(p,\tau)=\sup_{(x,t)\in\Gamma(p,\tau)}|u(x,t)|,\ \text{when}\ p\in\partial\Omega\ \text{and}\ \tau\in (0,T).
\end{equation*}

Let $\eta>0$ be fixed such that $[T-l_1,T-l]\subset[2\eta,T-2\eta]$, with $l$ and $l_1$ as defined in Corollary \ref{cor0}. Denote by $u$ the solution to
\begin{equation*}
\begin{cases}
\partial_tu-\Delta u=0,\ &\text{in}\ \Omega\times(0,T),\\
u=g\chi_{\mathcal{M}}\equiv\gamma,\ &\text{on}\ \partial\Omega\times(0,T),\\
u(0)=u_0,\ &\text{in}\ \Omega.
\end{cases}
\end{equation*}
(See the beginning of Section 5 for  the definition of the solution to this equation.)

Let $\gamma^\varepsilon$ in $C^1_0(\partial\Omega\times(0,T))$
 be a regularization of $\gamma$ in $\partial\Omega\times[0,T]$ such that
\begin{equation*}
\|\gamma^\varepsilon\|_{L^\infty(\partial\Omega\times[0,T])}+\e\,\|\gamma^\varepsilon\|_{C^1(\partial\Omega\times[0,T])}
\leq \|\gamma\|_{L^\infty(\partial\Omega\times[0,T])},
\end{equation*}
\begin{equation*}
\text{supp}(\gamma^\varepsilon)\subset\partial\Omega\times[\eta,T-\eta]
\end{equation*}
and let $v^{\varepsilon}$ be the solution to
\begin{equation*}
\begin{cases}
\partial_tv^{\varepsilon}-\Delta v^{\varepsilon}=0,\ &\text{in}\ \Omega\times(0,T),\\
v^{\varepsilon}=\gamma^{\varepsilon},\ &\text{on}\ \partial\Omega\times(0,T),\\
v^{\varepsilon}(0)=0,\ &\text{in}\ \Omega.
\end{cases}
\end{equation*}
From \cite[Theorem 3.2]{FabesSalsa1} and either \cite[Theorem 6.1]{R1} or \cite[Theorem 2.9]{R2}
\begin{equation}\label{green0}
\|v^\varepsilon\|_{L^\infty(\partial\Omega\times[0,T])}+\e\,\|\left(\nabla v^\e\right)^\sharp\|_{L^2(\partial\Omega\times [0,T])}\le\|\gamma\|_{L^\infty(\partial\Omega\times[0,T])},
\end{equation}
and the limits
\begin{equation*}
 \lim_{\underset{(x,t)\in\Gamma(p,\tau)}{(x,t)\rightarrow (p,\tau)}}\nabla v^\e(x,t)=
\nabla v^\e(p,\tau)
\end{equation*}
exist and are finite for a.e. $(p,\tau)$ in $\partial\Omega\times (0,T)$.
Also, $v_\e\in C(\overline\Omega\times [0,T])\cap C^\infty(\Omega\times [0,T])$, $v^\varepsilon=0$ for $t\leq \eta$,
and $v^\varepsilon=0$ on $\partial\Omega\times(T-\eta,T]$. Moreover, the H\"older regularity up to the boundary for bounded solutions to parabolic equations with zero local  lateral Dirichlet data, shows that there are positive constants $N=N(m,\varrho,\eta)$
and $\alpha=\alpha(m,\varrho)$, with $\alpha\in(0,1)$,  such that
\begin{equation}\label{green2}
\begin{split}
&|v^\varepsilon(x_1,t_1)-v^\varepsilon(x_2,t_2)|
\leq N\left[|x_1-x_2|^2+|t_1-t_2|\right]^{\alpha/2}
\|\gamma\|_{L^\infty(\partial\Omega\times[0,T])},
\end{split}
\end{equation}
when $x_1,x_2\in\overline{\Omega},\;T-\frac\eta 2\leq t_1,t_2\leq T$ \cite[Theorems 6.28 and 6.32]{Lieberman1}.

Let $\varphi(t)=e^{(T-t)\Delta}\varphi_T$, $t\in(0,T)$, where $\varphi_T$ is in $L^2(\Omega)$. From the regularity of caloric functions \cite[Theorem 1.7]{F1}
\begin{equation}\label{green5}
\varphi\in C([0,T];L^2(\Omega))\cap
C^\infty(\Omega\times[0,T))\cap C(\overline{\Omega}\times[0,T))
\end{equation}
and from \cite[Theorems 1.3 and 1.4]{FabesSalsa1} or the proof of (\ref{E: acojono}) and (\ref{E: supernecesariamente}) in this appendix,
 there are $N=N(m,\varrho)$ and  $\e=\e(m,\varrho,n)>0$ such that
\begin{equation}\label{E: acojono}
\|(\nabla \varphi)^\ast\|
_{L^\infty(0,T-\delta;\, L^{2+\e}(\partial\Omega))}
\leq Ne^{1/\delta}\,\|\varphi_T\|_{L^2(\Omega)},
\end{equation}
when $0<\delta<T$ and the limit
\begin{equation}\label{E: supernecesariamente}
\lim_{\underset{x\in\Gamma(p)}{x\to p}}\nabla\varphi(x,\tau)=
\nabla\varphi(p,\tau),
\end{equation}
exists and is finite  for a.e. $p\in\partial\Omega$ and for all $\tau\in (0,T)$.
Now, let $\Omega_j\subset\overline\Omega_{j+1}\subset\Omega$, $j\geq 1$, be a sequence of $C^\infty$-domains approximating $\Omega$ as in \cite[Lemma 2.2]{R1}. Set, $u^\varepsilon=v^\varepsilon+
e^{t\Delta}u_0$. By Green's formula,
\begin{equation*}
\tfrac{d}{dt}\int_{\Omega_j}u^\varepsilon(t)\varphi(t)\,dx
=
\int_{\partial\Omega_j}\tfrac{\partial u^\varepsilon}{\partial\nu_j}\,\varphi-\tfrac{\partial \varphi}{\partial\nu_j}\, u^\varepsilon\,d\sigma_j.
\end{equation*}
Integrating the above identity over $[\delta,T-\delta]$ for a fixed $\delta\in(0,\frac{\eta}{2})$, we get
\begin{equation}\label{equality1}
\begin{split}
&\int_{\Omega_j}u^\varepsilon(T-\delta)\varphi(T-\delta)\,dx
-\int_{\Omega_j}u^\varepsilon(\delta)\varphi(\delta)\,dx\\
&=
\int_{\partial\Omega_j\times(\delta,T-\delta)}\tfrac{\partial u^\varepsilon}{\partial\nu_j}\,\varphi-\tfrac{\partial \varphi}{\partial\nu_j}\, u^\varepsilon\,d\sigma_jdt.
\end{split}
\end{equation}
Recall that $u^\varepsilon(\delta)=e^{\delta\Delta}u_0$ and let $j\rightarrow+\infty$ in \eqref{equality1} with $\epsilon$ and $\delta$ being fixed. Then, \eqref{green0}, \eqref{green5}, \eqref{E: supernecesariamente} and the dominated convergence theorem show that\begin{equation*}
\int_{\Omega}u^\varepsilon(T-\delta)\varphi(T-\delta)\,dx
=\int_{\Omega}(e^{\delta\Delta}u_0)\varphi(\delta)\,dx
-\int_{\partial\Omega\times(\delta,T-\delta)}\gamma^\varepsilon\tfrac{\partial \varphi}{\partial\nu}\,d\sigma dt.
\end{equation*}
Because $\gamma^\varepsilon$ is supported in $[\eta,T-\eta]$,
the later is the same as
\begin{equation}\label{equality2}
\int_{\Omega}u^\varepsilon(T-\delta)\varphi(T-\delta)\,dx
=\int_{\Omega}(e^{\delta\Delta}u_0)\varphi(\delta)\,dx
-\int_{\partial\Omega\times(\eta,T-\eta)}\gamma^\varepsilon\tfrac{\partial \varphi}{\partial\nu}\,d\sigma dt,
\end{equation}
when $0<\delta<\eta/8$. Next, from \eqref{green2},
\begin{equation*}
u^\varepsilon(T-\delta)=v^\varepsilon(T-\delta)+e^{(T-\delta)
\Delta}u_0=v^\varepsilon(T)+e^{T\Delta}u_0+O(\delta^{\alpha/2}),
\end{equation*}
uniformly for $x\in\overline{\Omega}$, when $0<\delta<\eta/8$. Hence, after letting $\delta\rightarrow 0$ in \eqref{equality2}, we get
\begin{equation*}
\int_{\Omega}(v^\varepsilon(T)+e^{T\Delta}u_0)\varphi(T)\,dx
=\int_{\Omega}u_0\varphi(0)\,dx-\int_{\partial\Omega\times(\eta
,T-\eta)}\gamma^\varepsilon\tfrac{\partial\varphi}{\partial\nu}\,
d\sigma dt.
\end{equation*}

Also, from \eqref{green0} and \eqref{green2},
$v^\varepsilon$ converges uniformly over $\overline{\Omega}
\times[T-\eta/2,T]$ to some continuous function $\widetilde{v}$ as $\varepsilon\rightarrow 0$. We  claim that
$\widetilde{v}=v$. If it is the case, we get
after letting $\varepsilon\rightarrow 0$ in the last equality, that
\begin{equation*}
\int_{\Omega}u(T)\varphi(T)\,dx
=\int_{\Omega}u_0\varphi(0)\,dx-\int_{\partial\Omega\times(\eta
,T-\eta)}\gamma\tfrac{\partial\varphi}{\partial\nu}\,
d\sigma dt,
\end{equation*}
because $\gamma^\varepsilon(p,\tau)\rightarrow\gamma(p,\tau)$
for a.e. $(p,\tau)\in\partial\Omega\times(0,T)$, \eqref{green5} and
 \begin{equation*}
 \text{supp}(\gamma^\varepsilon)\cup \text{supp}(\gamma)\subset
\partial\Omega\times[\eta,T-\eta].
\end{equation*}
Recalling that $\gamma=g\chi_{\mathcal{M}}$, we get
\begin{equation*}
\int_{\Omega}u(T)\varphi(T)\,dx
=\int_{\Omega}u_0\varphi(0)\,dx-\int_{\partial\Omega\times(0
,T)}g\chi_{\mathcal{M}}\,\tfrac{\partial\varphi}{\partial\nu}\,
d\sigma dt.
\end{equation*}
Hence, \eqref{E: vayasecuaciioonesmasengorosa} is proved.

To verify that $\widetilde{v}=v$ over $\overline{\Omega}\times[0,T]$, observe that because $v^\e-v$ is the unique solution to
\begin{equation*}
\begin{cases}
\partial_tu-\Delta u =0,\ &\text{in}\ \Omega\times(0,T),\\
u=\gamma^{\e}-\gamma,\ &\text{on}\ \partial\Omega\times(0,T),\\
u(0)=0,\ &\text{in}\ \Omega,
\end{cases}
\end{equation*}
whose parabolic non-tangential maximal function is in $L^2(\partial\Omega\times(0,T))$ (See \cite[Theorem 3.2]{FabesSalsa1}), it holds that
\begin{equation}\label{E: acotaciontrivial}
\|(v^\varepsilon-v)^\sharp\|_{L^2(\partial\Omega\times(0,T))}
\leq N\|\gamma^\varepsilon-\gamma\|_{L^2(\partial\Omega\times(0,T))}.
\end{equation}
For fixed $p$ in $\partial\Omega$, we may assume that  $p=(0',0)$
and that near $p$,
\begin{equation*}
\Omega\cap Z_{m,\varrho}=\{(x',x_n):\phi(x')< x_n< 2m\varrho,\; |x'|\leq \varrho\},
\end{equation*}
with $\phi$ as in \eqref{E: condicionLipschitz} and \eqref{E: segunacondicionlipschitz}. Then,
\begin{equation*}
\begin{split}
&\int_0^T\int_{B'_{\varrho}}\int_{\phi(y')}^{\phi(y')+m\varrho}
|F(y',y_n,t)|^2\,dy'dy_ndt\\
&\leq m\varrho\int_0^T\int_{B'_\varrho}F^\sharp(y',y_n,t)^2\,dy'dt
\leq m\varrho\int_{\partial\Omega\times(0,T)}F^\sharp(p,t)^2\,d\sigma
dt,
\end{split}
\end{equation*}
for all functions $F$. The above estimate, a covering argument and \eqref{E: acotaciontrivial} show that
\begin{equation}\label{green6}
\|v^\e-v\|_{L^2(\Omega_{m\varrho}\times (0,T))}\le N\|\gamma^\e-\gamma\|_{L^2(\partial\Omega\times(0,T))},
\end{equation}
with $\Omega_\eta=\{x\in\Omega: d(x,\partial\Omega)\le\eta\}$. Recalling that $v^\e=v=0$ for $t\le\eta$, the local boundedness properties of solutions to parabolic equations \cite[Theorem 6.17]{Lieberman1}  show that,
\begin{equation*}
|(v^\varepsilon-v)(x,\tau)|\leq \left(-\!\!\!\!\!\!
\int_{B_{\frac{R}{20}}(x)
\times[\tau-\frac{R^2}{20^2},\tau]}|v^\varepsilon-v|^2\,dyds
\right)^{1/2},
\end{equation*}
when $x\in\partial\Omega_{R}$, $0\le\tau\leq T$, and taking $R<\frac{m\varrho}{20}$ above, we find from \eqref{green6} that
\begin{equation*}
\|v^\varepsilon-v\|_{L^\infty(\Omega^R\times\{0\}\cup\partial\Omega
^R\times[0,T])}
\leq N_R\left\|\gamma^\varepsilon-\gamma\right\|_{L^2(\partial
\Omega\times(0,T))}.
\end{equation*}
By the maximum principle and the above estimate
\begin{equation*}
\|v^\varepsilon-v\|_{L^\infty(\Omega^R\times[0,T])}
\leq N_R\left\|\gamma^\varepsilon-\gamma\right\|_{L^2(\partial
\Omega\times(0,T))}\longrightarrow 0,\;\;\text{as}\;\;\varepsilon\rightarrow 0,
\end{equation*}
which shows that $\widetilde{v}=v$ in $\overline{\Omega}\times[0,T]$.
\end{proof}
\vskip 10 pt

Next we give the proof of  (\ref{E: acojono}) and (\ref{E: supernecesariamente}). For this purpose, we first need to recall the following known result which follows from \cite[Theorem 3, Lemmas 1 and 8]{Dahlberg1} (See also \cite[Theorem 5.19]{KenigPipher}):
\begin{lemma}\label{L: integrabiidad} Let $\Omega$ be a Lipschitz domain in $\Rn$, $p\in\partial\Omega$ and $u$ in $C(\overline{\Omega}\cap B_{2R}(p))$ verify $\Delta u=0$ in $\Omega\cap B_{2R}(p)$ and $u=0$ on $\partial\Omega\cap B_{2R}(p)$. Then, there are  $N=N(m,\varrho)$ and $\e=\e(m,\varrho,n)$, $0<\e<1$, such that
\begin{equation*}
 \|(\nabla u)^*\|_{L^{2+\e}(\triangle_{R}(p))}
 \leq N R^{-1-\frac n2+\frac{n-1}{2+\e}}\|u\|_{L^2(B_{2R}(p)\cap\Omega)}.
 \end{equation*}
 Moreover, the limit
 \begin{equation*}
\nabla u(q)=\lim_{\underset{x\in\Gamma(q)}{x\rightarrow q}}\nabla u(x),\ \text{exists and is finite for a.e.}\  q\in \triangle_R(p).
\end{equation*}
\end{lemma}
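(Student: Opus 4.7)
The plan is to leverage Dahlberg's classical theory of harmonic functions in Lipschitz domains with partially vanishing boundary data: first establish an $L^{2}$ non-tangential maximal function estimate on the boundary, then bootstrap to $L^{2+\e}$ via a Gehring-type self-improvement, and finally read off the non-tangential limits from the standard Fatou theorem. After a translation and dilation $x\mapsto p+Rx$, I would normalise to $p=0$, $R=1$, and by flattening assume that $\Omega\cap B_{2}$ lies above a Lipschitz graph $x_{n}=\phi(x')$ with $\phi(0')=0$ and Lipschitz constant at most $m$; the scale-invariant factor $R^{-1-n/2+(n-1)/(2+\e)}$ in the statement is then recovered by dimensional analysis.

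The substantive step is the $L^{2}$ estimate
\begin{equation*}
\|(\nabla u)^{*}\|_{L^{2}(\triangle_{1})}\le N\|u\|_{L^{2}(\Omega\cap B_{2})}.
\end{equation*}
I would obtain it from the Rellich--Ne\v{c}as identity applied to a vector field $\alpha$ transverse to $\partial\Omega$ (for the flattened configuration, $\alpha=e_{n}$): on the portion of $\partial\Omega\cap B_{2}$ where $u=0$ we have $\nabla u=(\partial_{\nu}u)\,\nu$, so $|\nabla u|^{2}=(\partial_{\nu}u)^{2}$ pointwise on the boundary, and the identity produces an integrated bound
\begin{equation*}
\int_{\triangle_{3/2}}(\partial_{\nu}u)^{2}\,d\sigma\le N\int_{\Omega\cap B_{2}}\left(|\nabla u|^{2}+u^{2}\right)dx,
\end{equation*}
the right side being estimated by $\|u\|_{L^{2}(\Omega\cap B_{2})}$ through interior Caccioppoli. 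Upgrading the boundary $L^{2}$ bound on $\partial_{\nu}u$ to a bound on the non-tangential maximal function uses the Dahlberg equivalence $\|(\nabla u)^{*}\|_{L^{2}}\approx\|S(u)\|_{L^{2}}\lesssim\|\partial_{\nu}u\|_{L^{2}}$, which rests on $L^{2}$-boundedness of Cauchy-type singular integrals on Lipschitz graphs and is the substance of Lemmas~1 and~8 in the cited Dahlberg paper. Localising on surface balls $\triangle_{r}(q)\subset\triangle_{3/2}$ and combining the scaled $L^{2}$ bound with the boundary Poincar\'e inequality (since $u=0$ on $\triangle_{2r}(q)$) then yields the reverse H\"older inequality
\begin{equation*}
\left(\text{\rlap |{$\int_{\triangle_{r}(q)}$}}((\nabla u)^{*})^{2}\,d\sigma\right)^{1/2}\le N\,\text{\rlap |{$\int_{\triangle_{2r}(q)}$}}(\nabla u)^{*}\,d\sigma,
\end{equation*}
and Gehring's lemma promotes it to $(\nabla u)^{*}\in L^{2+\e}_{\mathrm{loc}}(\triangle_{3/2})$ for some $\e=\e(m,\varrho,n)>0$ with the required quantitative bound.

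The non-tangential limit statement is then a consequence of the $L^{2+\e}$ control: each $\partial_{i}u$ is itself harmonic on $\Omega\cap B_{2R}(p)$ and has $(\partial_{i}u)^{*}$ finite a.e.\ on $\triangle_{R}(p)$, and the Fatou theorem for harmonic functions in Lipschitz domains (Hunt--Wheeden, extended by Dahlberg) supplies the a.e.\ non-tangential limits. The main obstacle is the $L^{2}$ Rellich step: the identity itself is elementary, but turning it into an $L^{2}$ bound on $(\nabla u)^{*}$ requires the deep Calder\'on--Coifman--McIntosh--Meyer singular integral machinery on Lipschitz graphs, which is where the only serious harmonic analysis of the argument lives; the Gehring self-improvement and the Fatou step are comparatively routine once that foundation is in place.
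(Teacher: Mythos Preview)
Your sketch is sound and tracks the classical Dahlberg/Kenig--Pipher route: Rellich to get $\partial_\nu u\in L^2$ on the boundary, the square-function/non-tangential maximal function equivalence to upgrade this to $(\nabla u)^\ast\in L^2$, Gehring self-improvement to reach $L^{2+\e}$, and the Fatou theorem for the a.e.\ limits. This is exactly what the paper invokes: it does not give an independent proof of the lemma but simply states that it ``follows from \cite[Theorem~3, Lemmas~1 and~8]{Dahlberg1} (See also \cite[Theorem~5.19]{KenigPipher})''. So your proposal is not an alternative argument but a faithful unpacking of the cited references.

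One small point worth tightening: the reverse H\"older inequality you write for $(\nabla u)^\ast$ on surface balls does not come quite as directly as your sentence suggests. The localized $L^2$ Rellich bound gives $\|(\nabla u)^\ast\|_{L^2(\triangle_r(q))}$ controlled by $r^{-1}\|u\|_{L^2(\Omega\cap B_{2r}(q))}$; to close the Gehring loop you then need to dominate $r^{-1}\|u\|_{L^2(\Omega\cap B_{2r}(q))}$ by an $L^1$ average of $(\nabla u)^\ast$ on $\triangle_{2r}(q)$. This step uses that $u$ vanishes on $\triangle_{2r}(q)$ together with a Carleson-type estimate comparing solid $L^2$ norms of $u$ to boundary $L^1$ norms of $(\nabla u)^\ast$ (essentially the content of Dahlberg's Lemmas~1 and~8). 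Your phrase ``boundary Poincar\'e inequality'' gestures at this but understates what is needed; the actual mechanism is the comparability of $u$ in the interior with its non-tangential trace data, which again rests on the harmonic-measure machinery. Once that is acknowledged, the rest of your outline is correct.
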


\begin{proof}[ Proof of (\ref{E: acojono}) and (\ref{E: supernecesariamente})]

A covering of the lateral boundary of $\Omega\times (-\tfrac 12,\tfrac 12)$ and the application of Lemma \ref{L: integrabiidad} to the harmonic functions $u_j(x,y)=e^{\sqrt{\lambda_j}\,y}e_j(x)$ with $R=\tfrac 12$, $j\ge 1$, show that
\begin{equation}\label{E: loqueloescocurrealasautofuncioens}
\|\left(\nabla e_j\right)^\ast\|_{L^{2+\e}(\partial\Omega)}\le Ne^{\sqrt{\lambda_j}}\quad \text{and}\quad \lim_{\underset{x\in\Gamma(p)}{x\to p}}\nabla e_j(x)=\nabla e_j(p),
\end{equation}
exists and is finite for a.e. $p\in\partial\Omega$ and  for all $j\ge 1$. Recall that
\begin{equation*}
\varphi(t)=e^{\left(T-t\right)\triangle}\varphi_T=\sum_{j\geq 1}e^{-\lambda_j(T-t)}\left(\varphi_T,e_j\right) e_j,\;\;t\in[0,T],
\end{equation*}
when $\varphi_T$ is in $L^2(\Omega)$. Then,  for $(p,\tau)\in\partial\Omega\times [0,T-\delta]$, $\delta>0$ and $x\in \Gamma(p)$ with $t\le T-\delta$, we have
\begin{equation*}
|\nabla \varphi(x,t)|\le\sum_{j\geq 1}e^{-\lambda_j\delta}|(\varphi_T,e_j)|\left(\nabla e_j\right)^\ast(p).
\end{equation*}
Thus,
\begin{equation*}
\left(\nabla\varphi\right)^\ast(p,\tau)\le \sum_{j\geq 1}e^{-\lambda_j\delta}|(\varphi_T,e_j)|\left(\nabla e_j\right)^\ast(p),
\end{equation*}
and from \eqref{E: loqueloescocurrealasautofuncioens}
\begin{equation*}
\begin{split}
&\|(\nabla\varphi)^\ast(\cdot,\tau)\|_{L^{2+\e}(\partial\Omega)}\leq \sum_{j\geq 1}e^{-\lambda_j\delta}|\left(\varphi_T,e_j\right)| \|(\nabla e_j)^*
\|_{L^{2+\e}(\partial\Omega)}\\
&\leq N\sum_{j\geq 1}e^{-\lambda_j\delta+\sqrt{\lambda_j}}|\left(\varphi_T,e_j\right)|\leq Ne^{1/\delta}\sum_{j\geq 1}e^{-\lambda_j\delta/2}|\left(\varphi_T,e_j\right)|\\
&\leq Ne^{1/\delta}\Big(\sum_{j\geq 1}|\left(\varphi_T,e_j\right)|^2\Big)^{1/2}
\Big(\sum_{j\geq 1}e^{-\lambda_j\delta}\Big)^{1/2}\\
&=Ne^{1/\delta}\|\varphi_T\|_{L^2(\Omega)}
\Big(\sum_{j\geq 1}e^{-\lambda_j\delta}\Big)^{1/2}
\end{split}
\end{equation*}
Now, integrate \eqref{E: algo que ayuda} over $\Omega$ to find that
\begin{equation*}\label{E: otradesigualdad}
\sum_{j\geq 1}e^{-\lambda_j\delta}\le \left(4\pi\delta\right)^{-\tfrac n2}|\Omega|
\end{equation*}
and get \eqref{E: acojono}. Next, for $i=1,\dots,n$, $\varphi_k=\sum_{j\le k}e^{-\left(T-t\right)\lambda_j}\left(\varphi_T,e_j\right)e_j$, we have
\begin{equation*}
\begin{split}
&|\{p\in\partial\Omega: \limsup_{\underset{x\in\Gamma(p)}{x\to p}}\partial_i\varphi(x,t)-\liminf_{\underset{x\in\Gamma(p)}{x\to p}}\partial_i\varphi(x,t)>\lambda\}|\\
&=|\{p\in\partial\Omega: \limsup_{\underset{x\in\Gamma(p)}{x\to p}}\left(\partial_i\varphi-\partial_i\varphi_k\right)(x,t)-\liminf_{\underset{x\in\Gamma(p)}{x\to p}}\left(\partial_i\varphi-\partial_i\varphi_k\right)(x,t)>\lambda\}|\\
&\le |\{p\in\partial\Omega: \left(\nabla\varphi-\nabla\varphi_k\right)^\ast(p,t)>\lambda/2\}|\\
&\le \tfrac {4N^2 e^{2/\delta}}{\lambda^2} \sum_{j> k}\left(\varphi_T,e_j\right)^2,\ \text{when}\ t\le T-\delta,
\end{split}
\end{equation*}
which shows that \eqref{E: supernecesariamente} holds letting $k$ tend to infinity.
\end{proof}

\begin{proof}[Proof of Remark \ref{yuanyuan1}]
From the estimate in Theorem \ref{interpolation} with $\triangle_R(q)\subset\Gamma$,
\begin{equation*}
\|e^{t_2\Delta}f\|_{L^2(\Omega)}\leq \left(Ne^{\frac{N}{t_2-t_1}}
\|\tfrac{\partial}{\partial\nu}\,e^{t\Delta}f\|_{L^2(\Gamma\times
(t_1,t_2))}\right)^\theta\|e^{t_1\Delta}f\|^{1-\theta}_{L^2(\Omega)},
\;f\in L^2(\Omega)
\end{equation*}
 and the telescoping series method, we can get the following
$L^2$-observability inequality:
\begin{equation}\label{i3}
\|e^{L\Delta}f\|_{L^2(\Omega)}\leq Ne^{N/L}
\|\tfrac{\partial}{\partial\nu}\,
e^{t\Delta}f\|_{L^2(\Gamma\times
(L/2,L))},\;\;L\in(0,T).
\end{equation}
Next, recall the $L^p$-interpolation inequality,
\begin{equation}\label{i2}
\|\tfrac{\partial}{\partial\nu}\,e^{t\Delta}f
\|_{L^2(\Gamma\times(L/2,L))}
\leq \|\tfrac{\partial}{\partial\nu}\, e^{t\Delta}f
\|_{L^1(\Gamma\times(L/2,L))}
^{\frac{\e}{2\left(1+\e\right)}}
\|\tfrac{\partial}{\partial\nu}\, e^{t\Delta}f\|_{L^
{2+\varepsilon}
(\Gamma\times(L/2,L))}^{\frac{2+\e}{2\left(1+\e\right)}},
\end{equation}
and the bound
\begin{equation}\label{i1}
\|\tfrac{\partial}{\partial\nu}\,e^{t\Delta}f\|_{L^{2+\epsilon}
(\Gamma\times(L/2,L))}
\leq Ne^{N/L}\|f\|_{L^2(\Omega)},
\end{equation}
which follows from  \eqref{E: acojono} with $T=L$ and $\delta=L/2$. Then, from \eqref{i1}, \eqref{i2} and \eqref{i3},
\begin{equation*}
\begin{split}
\|e^{L\Delta}f\|_{L^2(\Omega)}
&\leq \left(Ne^{N/L}\|\tfrac{\partial}
{\partial\nu}\,e^{t\Delta}f\|_{L^{1}
(\Gamma\times(L/2,L))}\right)^\rho
\|f\|_{L^2(\Omega)}^{1-\rho}\\
&\leq\left(Ne^{N/L}\|\tfrac{\partial}
{\partial\nu}\,e^{t\Delta}f\|_{L^{1}
(\Gamma\times(0,L))}\right)^\rho
\|f\|_{L^2(\Omega)}^{1-\rho}\ ,
\end{split}
\end{equation*}
with $\rho=\frac{\e}{2\left(1+\e\right)}$\,.
In particular,
\begin{equation*}
\begin{split}
&\|e^{t_2\Delta}f\|_{L^2(\Omega)}
\leq\left(Ne^{N/\left(t_2-t_1\right)}\|\tfrac{\partial}
{\partial\nu}\,e^{t\Delta}f\|_{L^{1}
(\Gamma\times(t_1,t_2))}\right)^\rho
\|e^{t_1\Delta}f\|_{L^2(\Omega)}^{1-\rho},
\end{split}
\end{equation*}
when $0\le t_1<t_2\le T$, $0<T\le 1$.
Finally, making use of  the telescoping series arguments, we  get
\begin{equation*}
\|e^{T\Delta}f\|_{L^2(\Omega)}
\leq Ne^{N/T}\|
\tfrac{\partial}{\partial\nu}\,e^{t\Delta}f
\|_{L^1(\Gamma\times (0,T))}, \ f\in L^2(\Omega).
\end{equation*}
This, together with  Corollary~\ref{cor0}, yields the statement in  Remark \ref{yuanyuan1}.
\end{proof}
\pagebreak
\vfill


\end{document}